\numberwithin{equation}{section}
\newtheorem{theorem}{Theorem}[section]
\newtheorem{corollary}[theorem]{Corollary}
\newtheorem{lemma}[theorem]{Lemma}
\newtheorem{proposition}[theorem]{Proposition}
\newtheorem{definition}[theorem]{Definition}
\newtheorem{remark}[theorem]{Remark}
\tikzset{
  set arrow inside/.code={\pgfqkeys{/tikz/arrow inside}{#1}},
  set arrow inside={end/.initial=>, opt/.initial=},
  /pgf/decoration/Mark/.style={
    mark/.expanded=at position #1 with
    {
      \noexpand\arrow[\pgfkeysvalueof{/tikz/arrow inside/opt}]{\pgfkeysvalueof{/tikz/arrow inside/end}}
    }
  },
  arrow inside/.style 2 args={
    set arrow inside={#1},
    postaction={
      decorate,decoration={
        markings,Mark/.list={#2}
      }
    }
  },
}
\newcommand{\N}{\mathbb{N}}
\newcommand{\R}{\mathbb{R}}
\newcommand{\Z}{\mathbb{Z}}
\renewcommand{\SS}{\mathscr{S}}
\newcommand{\cE}{{\ensuremath{\mathcal E}}}
\newcommand{\cI}{{\ensuremath{\mathcal I}}}
\newcommand{\cP}{{\ensuremath{\mathcal P}}}
\newcommand{\cR}{{\ensuremath{\mathcal R}}}
\newcommand{\cY}{{\ensuremath{\mathcal Y}}}
\newcommand{\sfc}{{\sf c}}
\newcommand{\sfd}{{\sf d}}
\newcommand{\sfi}{{\sf i}}
\newcommand{\sfl}{{\sf l}}
\newcommand{\sfr}{{\sf r}}
\newcommand{\sfs}{{\sf s}}
\newcommand{\sfD}{{\sf D}}
\newcommand{\frH}{{\mathfrak H}}
\newcommand{\frP}{{\mathfrak P}}
\newcommand{\rmd}{{\mathrm d}}
\newcommand{\rmC}{{\mathrm C}}
\newcommand{\rmD}{{\mathrm D}}
\newcommand{\rmE}{{\mathrm E}}
\newcommand{\rmL}{{\mathrm L}}
\newcommand{\rmM}{{\mathrm M}}
\newcommand{\rmS}{{\mathrm S}}
\newcommand{\Kliminf}{K\kern-3pt-\kern-2pt\mathop{\rm
lim\,inf}\limits}  
\newcommand{\Klimsup}{K\kern-3pt-\kern-2pt\mathop{\rm lim\,sup}\limits}  
\newcommand{\argmin}{\mathop{\rm argmin}\limits}   
\newcommand{\restr}[1]{\lower3pt\hbox{$|_{#1}$}}
\newcommand{\up}{\uparrow}
\newcommand{\eps}{\varepsilon}  
\newcommand{\nchi}{{\raise.3ex\hbox{$\chi$}}}
\def\qed{\ifmmode 
  \else \leavevmode\unskip\penalty9999 \hbox{}\nobreak\hfill
  \fi               
    \qquad           \hbox{\hskip.5em $\square$
                \hskip.1em}}
\def\endproofsym{\qed}
\newenvironment{proof}[1][Proof]{\def\endproofsym{\qed}\trivlist\item[\hskip\labelsep{%
\noindent{\normalfont\emph{#1}.}\hskip .321429\parindent}]\ignorespaces}
{\endproofsym\endtrivlist}
\newcommand{\leftl}{-}
\newcommand{\rightl}{+}
\newcommand{\equi}[1]{\sim_{\sfc}}
\newcommand{\tlim}{\tau\text{-}\kern-3pt\mathop{\rm lim}\limits}  
\newcommand{\fhaus}[2]{\mathscr H_{#1}^1
  \ifthenelse{\equal{#2}{}}{}{(#2)}}
\newcommand{\Jmp}[1]{\mathrm{Jmp}_{#1}}
\newcommand{\mVar}[1]{\text{\sl{Var}}_{#1}}
\newcommand{\VE}{{\rm VE}}
\newcommand{\BV}{\mathrm{BV}}
\DeclareMathOperator{\varpsi}{Var_\Psi}
\DeclareMathOperator{\Ju}{J_u}
\DeclareMathOperator{\Res}{\cR}
\DeclareMathOperator{\Cf}{Trc} 
\DeclareMathOperator{\Fd}{\mathsf c}
\DeclareMathOperator{\Cd}{\mathrm{GapVar}_\delta}
\DeclareMathOperator{\cost}{\sfc}
\newcommand{\rmJ}{\mathrm J}
\newcommand{\SSd}{\SS\kern-3pt_\sfd}
\newcommand{\SSD}{\SS_\sfD}
\newcommand{\Holes}{\mathfrak H}
\newcommand{\Pf}{\mathfrak P_f}
\newcommand{\mytag}[2]{\hyperref[#1#2]{$\la$#1\ifthenelse{\equal{#2}{}}{}{.#2}$\ra$}}
\newcommand{\Wir}{\mathit W_{\mathsf{i}\mathsf{r}}'}
\newcommand{\Wsl}{\mathit W_{\mathsf{s}\mathsf{l}}'}
\newcommand{\Wird}[1]{\mathit W_{\mathsf{i}\mathsf{r},{#1}}'}
\newcommand{\Wsld}[1]{\mathit W_{\mathsf{s}\mathsf{l},{#1}}'}
\newcommand{\maxW}{\textit{\textbf{m}}^{\bar{u}}_\delta}
\newcommand{\invmaxW}{\textit{\textbf{p}}^{\bar{u}}_\delta}
\newcommand{\minW}{\textit{\textbf{n}}^{\bar{u}}_\delta}
\newcommand{\invminW}{\textit{\textbf{q}}^{\bar{u}}_\delta}
\newcommand{\invmaxWc}{\textit{\textbf{p}}^{\bar{u}}_{\sfc,\delta}}
\newcommand{\invmaxWca}{\textit{\textbf{p}}^{u(a)}_{\sfc,\delta}}
\newcommand{\invminWc}{\textit{\textbf{q}}^{\bar{u}}_{\sfc,\delta}}
\newcommand{\invminWca}{\textit{\textbf{q}}^{u(a)}_{\sfc,\delta}}
\DeclareMathOperator{\ul}{\mathit{u}_{\sfl}}
\DeclareMathOperator{\ur}{\mathit{u}_{\sfr}}
\title{Visco-Energetic solutions to one-dimensional \\ rate-independent problems}
\begin{document}

\author{Luca Minotti
\thanks{Universit\`a di Pavia. 
email:
\textsf{luca.minotti01@universitadipavia.it}. 
}
}

\maketitle

\begin{abstract} 
Visco-Energetic solutions of rate-independent systems (recently introduced in \cite{MinSav16}) are obtained by solving a modified time Incremental Minimization Scheme, where at each step the dissipation  is reinforced by a viscous correction $\delta$, typically a quadratic perturbation of the dissipation distance. Like Energetic and Balanced Viscosity solutions, they provide a variational characterization of rate-independent evolutions, with an accurate description of their jump behaviour.  
\par
In the present paper we study Visco-Energetic solutions in the one-dimensional case and we obtain a full characterization for a broad class of energy functionals. In particular, we prove that they exhibit a sort of intermediate behaviour between Energetic and Balanced Viscosity solutions, which can be finely tuned according to the choice of the viscous correction $\delta$.
\end{abstract}

\tableofcontents

\section{Introduction} Rate-independent problems occur in several contexts. We refer the reader to the recent monograph \cite{Mielke-Roubicek15} for a survey of rate-independent modeling and analysis in a wide variety of applications.
The analytical theory of rate-independent evolutions encounters some mathematical challenges, which are apparent even in the simplest example, the \emph{doubly nonlinear} differential inclusion
\begin{equation} \label{DN}
\partial \Psi(u'(t))+\rmD \cE(t,u(t))\ni 0\quad \text{in $X^*$}\quad\text{ for a.a. $t\in(a,b)$}. \tag{DN}
\end{equation}
Here $X^*$ is the dual of a finite-dimensional linear space, $\rmD\cE$ is the (space) differential of a time-dependent energy functional $\cE\in \rmC^1([a,b]\times X;\R)$ and $\Psi: X\rightarrow (0,+\infty)$ is a convex and nondegenerate dissipation potential, hereafter supposed \emph{positively homogeoneous of degree 1}.
\par
It is well known that if the energy $\cE(t,\cdot)$ is not strictly convex, one cannot expect the existence of an absolutely continuous solution to \eqref{DN}, so that the natural space for candidate solutions $u$ is $\BV([a,b];X)$. This fact has motivated the development of various weak formulations of \eqref{DN}, which should also take into account the behaviour of $u$ at jump points.

\paragraph{Energetic solutions.}  The first is the notion of \emph{Energetic solutions}, \cite{Mielke-Theil-Levitas02,Mielke-Theil04,Mainik-Mielke05}. For the simplified rate-independent evolution \eqref{DN}, Energetic solutions are curves $u:[a,b]\to X$ with bounded variation that are characterized by two variational conditions, called \emph{stability} (S$_\Psi$) and \emph{energy balance} (E$_\Psi$):
\begin{equation} \label{en-stability}
\cE(t,u(t))\le \cE(t,z)+\Psi(z-u(t))\qquad\text{for every $z\in X$}, \tag{$\rmS_\Psi$}
\end{equation}
\begin{equation} \label{en-energy-balance}
\cE(t,u(t))+\varpsi(u;[a,t])=\cE(a,u(a))+\int_a^t\partial_t\cE(s,u(s))\,\rmd s, \tag{$\rmE_\Psi$}
\end{equation}
where $\varpsi$ is the pointwise total variation with respect to $\Psi$ (see \eqref{eq:total-variation} in section \ref{sec:2} for the precise definition).
\par
One of the strongest feature of the energetic approach is the possibility to construct energetic solutions by solving the \emph{time Incremental Minimization Scheme}
\begin{equation} \label{IMS}
\min_{U\in X} \cE(t_\tau^n,U)+\Psi\big(U-U_\tau^{n-1}\big). \tag{$\mathrm{IM}_\Psi$}
\end{equation}
If $\cE$ has compact sublevels then for every ordered partition 
$\tau=\{t^0_\tau=a,t^1_\tau,\cdots,t^{N-1}_\tau,t^N_\tau=b\}$ of the
interval $[a,b]$ with variable time step
$\tau^n:=t^n_\tau-t^{n-1}_\tau$ and for every
initial choice $U^0_\tau= u(a)$
we can construct by induction an approximate
sequence $(U^n_\tau)_{n=0}^N$ solving \eqref{IMS}. If $\overline U_\tau$ denotes the left-continuous piecewise constant
interpolant
of $(U^n_\tau)_n$, then the family of discrete solutions $\overline
U_\tau$
has limit curves with respect to pointwise convergence as 
the maximum of the step sizes
$|\tau|=\max \tau^n$ vanishes, and every limit curve $u$ is an energetic solution.

\par
Consider for instance the 1-dimensional example when the energy has the form
\begin{equation} \label{1d-energy}
\cE(t,u):=W(u)-\ell(t)u\quad\text{for a double-well potential such as $W(u)=(u^2-1)^2$}.
\end{equation}
When the loading $\ell \in \rmC^1([a,b])$ is strictly increasing, $\Psi(v):=\alpha |v|$ with $\alpha>0$, and $u(a)$ is choosen carefully, it is possible to prove, \cite{Rossi-Savare13}, that an Energetic solution $u$ is an increasing selection of the equation
\begin{equation} \label{eq:23}
\alpha+W^{**}(u(t))\ni\ell(t)\qquad\text{for every $t\in [a,b]$},
\end{equation}
where $W^{**}$ is the convex envelope $W^{**}(u)=((u^2-1)_+)^2$.

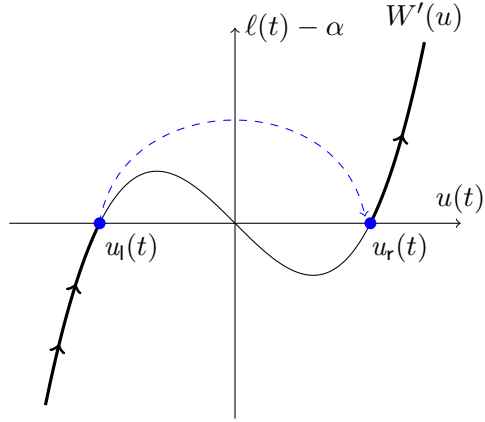
\begin{figure}[!ht]
\centering
\begin{tikzpicture}

\draw[->] (-3,0) -- (3,0) node[above] {$u(t)$};
\draw[->] (0,-2.6) -- (0,2.6) node[right] {$\ell(t)-\alpha$};

 \begin{scope}[scale=1.8]
    \draw[domain=-1.4:1.4,samples=100] plot ({\x}, {(\x)^3-\x}) node[above] {$W'(u)$};
     \draw[domain=-1.4:-1,very thick,samples=100] plot ({\x}, {(\x)^3-\x}) [arrow inside={}{0.33,0.66}];
     \draw[domain=1:1.4,very thick,samples=100] plot ({\x}, {(\x)^3-\x}) [arrow inside={}{0.50}]    ;
      \foreach \Point in {(-1,0) ,(1,0) }{
    \draw[fill=blue,blue] \Point circle(0.04);
    }
\draw[->, dashed,blue] (-1,0) to [bend left=80, looseness=1.3] (0.96,0.05);
\node[below] at (-1,0) {\qquad\,$\ul(t)$};
\node[below] at (1,0) {\qquad $\ur(t)$};
  \end{scope}
  
\end{tikzpicture} 
\caption{Energetic solution for a double-well energy $W$ with an
  increasing load $\ell$.}
\label{fig:1}
\end{figure}
In this context, the solution $u$ have a jump when it is satisfied the so-called \emph{Maxwell rule}
\begin{equation}
\int_{\ul(t)}^{\ur(t)}\Big( W'(r)-\ell(t)+\alpha_+ \Big)\,\rmd r=0.
\end{equation}
The latter evolution mode prescribes that for all $t\in [a,b]$, the function $u(t)$ only attains \emph{absolute minima} of the function $u\mapsto W(u)-(\ell(t)-\alpha_+)u$. This corresponds to a convexification of $W$ and causes the system to jump ``early''.

\paragraph{Balanced Viscosity (BV) solutions.} The global stability condition \eqref{en-stability} may lead the system to change instantaneously in a very drastic way, jumping into far apart energetic configurations. In order to obtain a formulation where local effects are more relevant (see 
\cite{DalMaso-Toader02, NegOrt07?QSCP,Efendiev-Mielke06}), a natural idea is to consider rate-independent evolution as the limit of systems with smaller and smaller viscosity, namely to study the approximation of \eqref{DN}
\begin{equation} \label{eq:DNepsilon}
\partial\Psi_\varepsilon(u'(t))+\rmD\cE(t,u(t))\ni 0 \quad\text{in $X^*$},\qquad \Psi_\varepsilon(v):=\Psi(v)+\frac{\varepsilon}{2}\Psi^2(v), \tag{$\mathrm{DN}_\varepsilon$}
\end{equation}
which corresponds to introduce a quadratic (or even more general) perturbation in the time Incremental Minimization Scheme:
\begin{equation} \label{eq:173}
\min_{U\in X} \cE(t_\tau^n,U)+\Psi\big(U-U_\tau^{n-1}\big)+\frac{\varepsilon^n}{2\tau^n}\Psi^2\big(U-U_\tau^{n-1}\big). \tag{$\mathrm{IM}_{\Psi,\varepsilon}$}
\end{equation}
The choice $\varepsilon^n=\varepsilon^n(\tau)\downarrow 0$ with $ \frac{\varepsilon^n(\tau)}{|\tau|}\uparrow+\infty$ leads to the notion of \emph{Balanced Viscosity} solutions \cite{Rossi-Mielke-Savare08,Mielke-Rossi-Savare09,Mielke-Rossi-Savare12,Mielke-Rossi-Savare13}.
Under suitable smoothness and lower semicontinuity assumptions, it is possible to prove that 
all the limit curves satisfy a \emph{local stability} condition and a
modified energy balance, involving 
an augmented total variation that 
encodes a more refined description of the jump behaviour of $u$:
roughly speaking, a jump between $\ul(t)$ and $\ur(t)$ occurs only when
these values can be connected by a rescaled solution $\vartheta$ of
\eqref{eq:DNepsilon}, where the energy is frozen at the jump time $t$
\begin{equation}
  \label{eq:173bis}
  \partial\Psi(\vartheta'(s))+\vartheta'(s)+\rmD\cE(t,\vartheta(s))\ni0.
\end{equation}

In the one-dimensional example \eqref{1d-energy}, with the loading $\ell$ strictly increasing and under suitable choices of the initial datum, it is possible to  prove, \cite{Rossi-Savare13}, that $u$ is a BV solution if and only if it is nondecreasing and
\begin{equation} \label{eq:24}
\alpha+W'(u(t))=\ell(t)\qquad \text{for all $t\in[a,b]\setminus \Ju$}.
\end{equation}
\begin{figure}[!ht]
  \centering
  \begin{tikzpicture}

\draw[->] (-3,0) -- (3,0) node[above] {$u(t)$};
\draw[->] (0,-2.8) -- (0,2.8) node[right] {$\ell(t)-\alpha$};

 \begin{scope}[scale=1.8]
    \draw[domain=-1.4:1.4,samples=100] plot ({\x}, {(\x)^3-\x}) node[above] {$W'(u)$};
    \draw[domain=-1.4:-0.57,very thick,samples=100] plot ({\x}, {(\x)^3-\x}) [arrow inside={}{0.33,0.66}];
     \draw[domain=1.15:1.4,very thick,samples=100] plot ({\x}, {(\x)^3-\x}) [arrow inside={}{0.50}]    ;
     \draw[domain=-0.57:1.15,thick,blue,samples=100] plot ({\x}, {0.385}) [arrow inside={}{0.33,0.66}];
     \foreach \Point in {(-0.57,0.385),(1.15,0.385)}{
    \draw[fill=blue,blue] \Point circle(0.04);
}
\draw[dotted] (-0.57,0.385) -- (-0.57,0) node[below] {$\ul(t)$};
\draw[dotted] (1.15,0.385) -- (1.15,0) node[below] {\,\,\,$\ur(t)$};
 \end{scope}
  
\end{tikzpicture}

  \caption{BV solution for a double-well energy $W$ with an
  increasing load $\ell$. The blue line denotes the 
  path described by the optimal transition $\vartheta$ solving 
\eqref{eq:173bis}.}
  \label{fig:2}
\end{figure}
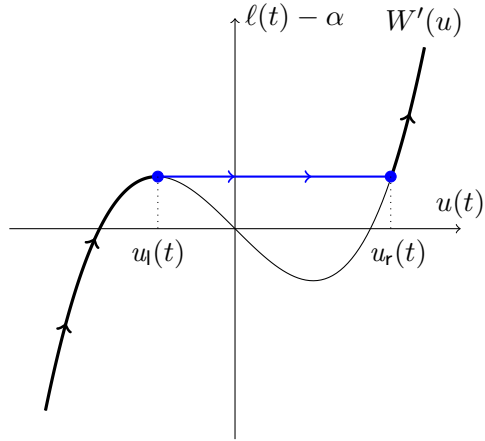

The evolution mode \eqref{eq:24} follows the so called \emph{Delay rule}, related to hysteresis behaviour. The system accepts also \emph{relative minima} of $u\mapsto W(u)-(\ell(t)-\alpha)u$, and thus the function $t\mapsto u(t)$ tend to jump ``as late as possible''.

\paragraph{Visco-Energetic solutions and main results of the paper.} Recently, in \cite{MinSav16}, the new notion of Visco-Energetic (VE) solutions has been proposed. This is a sort of intermediate situation between energetic and balanced viscosity, since these solutions are obtained by studying the time Incremental Minimization Scheme \eqref{eq:173} when one keeps constant the ratio
$\mu:=\eps^n/\tau^n$. In this way 
the dissipation $\Psi$ is corrected by an extra viscous
penalization term, for example of the form
\begin{equation} \label{eq:169}
\delta(u,v):=\frac{\mu}2\Psi^2(v-u)\qquad \text{for every $u,v\in X,\quad\mu\ge0$,}
\end{equation}
which induces a stronger localization  of the minimizers, according to the size of the parameter $\mu$. The new modified time Incremental Minimization Scheme is therefore
\begin{equation}
  \label{eq:167mu}
  \min_{U\in  X} \cE(t_\tau^n,U)+\Psi\big(U-U_\tau^{n-1}\big)+\delta(U,U_\tau^{n-1}) \tag{$\mathrm{IM}_{\Psi,\mu}$}.
\end{equation}
\par
As in the energetic and BV cases, a variational characterization of the functions obtained as a limit of the solution of \eqref{eq:167mu} is possible, still involving a suitable stability condition and an energetic balance. Concerning stability, we have a natural generalization of \eqref{en-stability}:
\begin{equation} \label{eq:168}
  \cE(t,u(t))\le \cE(t,v)+\Psi(v-u(t))+\delta(u(t),v) \quad\text{for every }v\in X,\
  t\in [a,b]\setminus \Ju.
  \tag{S$_\sfD$}
\end{equation}
The right replacement of the energy balance condition is harder to formulate. A heuristic idea, which one can figure out by the direct analysis of \eqref{1d-energy}, is that jump transitions between $\ul(t)$ and $\ur(t)$ should be described by discrete trajectories $\vartheta:Z\to X$ defined in a subset $Z\subset \Z$ 
such that each value $\vartheta(n)$ is a minimizer of the incremental problem \eqref{eq:167mu}, with datum $\vartheta(n-1)$ and with the energy ``frozen'' at time $t$.
In the simplest cases $Z=\Z$, the left and right jump values
 are the limit of $\vartheta(n)$ as $n\to\pm\infty$, but more
complicated situations can occur, when $Z$ is a proper subset of $\Z$
or one has to deal with concatenation of (even countable) discrete
transitions and sliding parts parametrized by a continuous variable, where the stability condition \eqref{eq:168} holds. 
\par
In order to capture all of these possibilities, VE transitions are parametrized by continuous maps $\vartheta:E\to X$ defined
in an \emph{arbitrary compact subset} of $\R$. We refer to section \ref{subsec:Visco-Energetic-Euclidean} for the precise description of the new dissipation cost and the corresponding total variation. 
\par
In the present paper we study Visco-Energetic solutions in the one dimensional setting and we obtain a full characterization for the same broad class of energy functionals of \cite{Rossi-Savare13}. Respect to Energetic and BV solutions, the main difficulty here comes from the description of solutions at jumps: as we have mentioned, transitions are now defined in an arbitrary compact subset of $\R$, so that a wide range of possibilities can occur. For instance, the energetic case is a very particular situation, where (e.g. for an increasing jump) the transitions have the form
\begin{equation}
\vartheta: \{0;1\}\rightarrow\R\qquad\text{such that $\vartheta(0)=\ul(t),\quad\vartheta(1)=\ur(t)$},
\end{equation}
defined in a compact set that consists just in two points.
\par
However, thanks to an accurate analysis of VE dissipation cost, we are able to describe all these possibilities. Coming back to the standard example \eqref{1d-energy}, with the viscous correction $\delta$ of the form \eqref{eq:169}, the behaviour of VE solutions strongly depends on the parameter $\mu$. More precisely, the following situations can occur:
\begin{itemize}
\item The viscous correction term is ``strong'', for example $\mu \ge -\min W''$. In this case VE solutions exhibits a behaviour comparable to BV solutions: both satisfies the same local stability condition and equation \eqref{eq:24} holds, so that they follow a \emph{delay rule}.
\item  No viscous corrections are added to the system, which corresponds to $\mu=0$. In this case VE solutions coincides with energetic solutions, equation \eqref{eq:23} holds and they satisfy the \emph{Maxwell rule}.
\item A ``weak'' viscous correction is added to the system, which corresponds to a small $\mu>0$. We have a sort of intermediate situation between the two previous cases: a jump can occur even before reaching a local extremum of $W'$. In particular, an increasing jump can occur when the \emph{modified Maxwell rule} is satisfied:
\begin{equation} \label{eq:201}
\int_{\ul(t)}^{u_+}\Big( W'(r)-\ell(t)+\alpha+\mu (r-\ul(t))\Big)\,\rmd r=0,\qquad \text{for some $u_+>\ul(t)$}.
\end{equation}
In this case $\ur(t)$ may differ from $u_+$: see Figure \ref{fig:3} for more details.
\end{itemize}

\begin{figure}[!ht]
\centering
\begin{tikzpicture}

\draw[->] (-3,0) -- (3,0) node[above] {$u(t)$};
\draw[->] (0,-2.8) -- (0,2.8) node[right] {$\ell(t)-\alpha$};

 \begin{scope}[scale=2]
    \draw[domain=-1.4:1.4,samples=200] plot ({\x}, {(\x)^3-\x}) node[above] {$W'(u)$};
     \draw[domain=-1.4:-0.57,very thick,samples=200] plot ({\x}, {(\x)^3-\x}) [arrow inside={}{0.33,0.66}];
     \draw[domain=1.1547:1.4,very thick,samples=200] plot ({\x}, {(\x)^3-\x}) [arrow inside={}{0.50}]    ;
     \foreach \Point in {(-0.57,0.3849),(1.1547,0.3849)}{
       \draw[fill=blue,blue] \Point circle(0.04);
}
     \foreach \Point in {(-0.542,0.3849),(-0.536,0.3849), (-0.530,0.3849), (-0.524,0.3849), (-0.518,0.3849), (-0.512,0.3849), (-0.506,0.3849), (-0.500,0.3849), (-0.494,0.3849), (-0.487,0.3849), (-0.479,0.3849), (-0.469,0.3849),(-0.457,0.3849),(-0.442,0.3849),(-0.423,0.3849),(-0.398,0.3849),(-0.363,0.3849),(-0.311,0.3849),(-0.225,0.3849),(-0.065,0.3849),(0.241,0.3849),(0.624,0.3849),(0.901,0.3849),(1.045,0.3849),(1.109,0.3849),(1.136,0.3849),(1.147,0.3849),(1.151,0.3849),(1.153,0.3849)}{
    \draw[fill=blue,blue] \Point circle(0.01);
}
\draw[->, dashed,blue]  (-0.225,0.3849) to [bend left=90,looseness=1.5]
(-0.065,0.3849);
\draw[fill=blue,blue] (-0.065,0.3849) circle(0.02);
\draw[->, dashed,blue]  (-0.065,0.3849) to [bend left=80,looseness=2]
(0.241,0.3849);
\draw[fill=blue,blue] (0.241,0.3849) circle(0.03);
\draw[->, dashed,blue]  (0.241,0.3849) to [bend left=80,looseness=2.5]
(0.624,0.3849);
\draw[fill=blue,blue] (0.624,0.3849) circle(0.03);
\draw[->, dashed,blue]  (0.624,0.3849) to [bend left=80, looseness=2]
(0.901,0.3849);
\draw[fill=blue,blue] (0.901,0.3849) circle(0.03);
\draw[->, dashed,blue]  (0.901,0.3849) to [bend left=80, looseness=1.5]
(1.045,0.3849);
\draw[fill=blue,blue](1.045,0.3849) circle(0.02);

\draw[dotted] (-0.57,0.3849) -- (-0.57,0) node[below] {$\ul(t)$};
\draw[dotted] (1.1547,0.3849) -- (1.1547,0) node[below] {\,\,\,$\ur(t)$};
  \end{scope}
\end{tikzpicture}
\quad
\begin{tikzpicture}

\draw[->] (-3.5,0) -- (3.5,0) node[above] {$u(t)$};
\draw[->] (0,-3) -- (0,3) node[right] {$\ell(t)-\alpha$};

 \begin{scope}[scale=2]
    \draw[domain=-1.4:1.4,samples=200] plot ({\x}, {(\x)^3-\x}) node[above] {$W'(u)$};
     \draw[domain=-1.4:-0.816,very thick,samples=200] plot ({\x}, {(\x)^3-\x}) [arrow inside={}{0.33,0.66}];
     \draw[domain=1.1153:1.4,very thick,samples=200] plot ({\x}, {(\x)^3-\x}) [arrow inside={}{0.50}]    ;
     \foreach \Point in {(-0.816,0.272),(1.1153,0.272)}{
     \draw[fill=blue,blue] \Point circle(0.04);
     }
     \draw[fill=blue,blue] (0.816,0.272) circle(0.03);
     \foreach \Point in {(1.082,0.272), (1.112,0.272), (1.115,0.272)}{
    \draw[fill=blue,blue] \Point circle(0.01);
}
\draw[->, dashed,blue] (-0.816,0.272) to [bend left=60] (0.8,0.3);
\draw[dashed ,blue] (0.81,0.272) to [bend left=60,looseness=3] (0.92,0.3);
\draw[dashed,blue] (0.95,0.272) to [bend left=60,looseness=3] (1.05,0.3);

\draw[fill=blue,blue] (0.95,0.272) circle(0.03);
\draw[fill=blue,blue] (1.05,0.272) circle(0.02);

\draw[dotted] (-0.816,0.272) -- (-0.816,0) node[below] {\,\,\,$\ul(t)$};
\draw[dotted] (0.816,-0.272) -- (0.816,0) node[below] {$u_+\quad$};
\draw[dotted] (1.1153,0.272) -- (1.1153,0) node[below] {\quad $\ur(t)$};

\draw[red] (-0.816,0.272) -- (0.816,-0.272);
  \end{scope}
\end{tikzpicture}
\caption{Visco-Energetic solutions for a double-well energy $W$ with an
  increasing load $\ell$. When $\mu>-\min W''$ (first picture) the solution jumps when it reach the maximum of $W'$ and the transition is the  ``double chain'' obtained by solving the Incremental Minimization Scheme with frozen time $t$. When $\mu$ is small (second picture) the optimal transition $\vartheta$  makes a first jump connecting $\ul(t)$ with $u_+$ according to the
  modified Maxwell rule \eqref{eq:201}: $\ul(t)$ and $u_+$ corresponds to
  the intersection of $W'$ with the red line, whose slope is $-\mu$. }
\label{fig:3}
\end{figure}
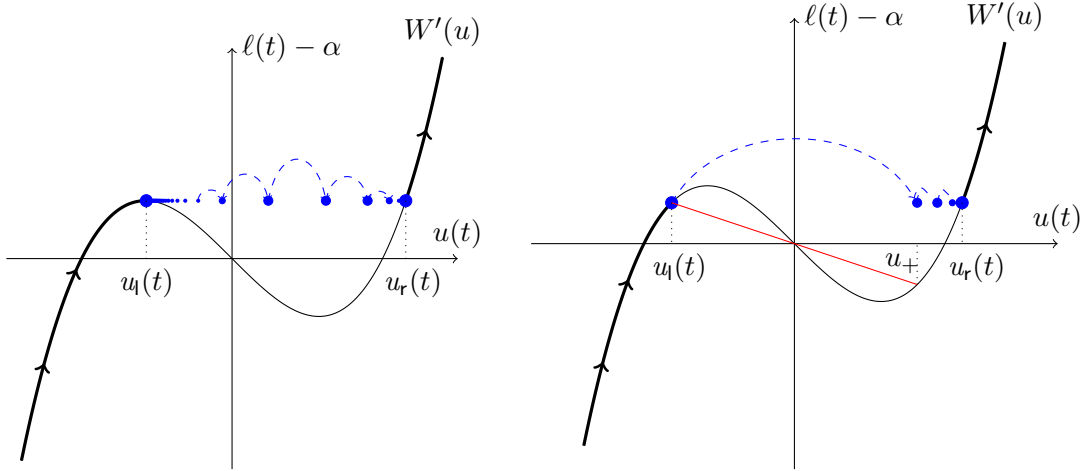

\paragraph{Plan of the paper.} In the paper we will analyse VE solutions to one-dimensional rate-independent evolutions driven by general (nonconvex) potentials and we will assume that the viscous corrections $\delta$ satisfies only the natural assumptions of the visco-energetic theory, including in particular the quadratic case \eqref{eq:169}.

In the preliminary section \ref{sec:2}, we recall the main definitions of Visco-Energetic solutions, their dissipation cost and the corresponding total variation, along with some useful properties and characterizations coming from the general theory; all the assumptions of the one-dimensional setting are collected in section \ref{subsec:one-dimensional-setting}.
\par
In section \ref{sec:3}, after a brief discussion about the stability conditions, we give a characterizations of Visco-Energetic solutions with a general (i.e. non monotone) external loading. This characterization involves the \emph{one-sided global slopes} with a $\delta$ correction, which are defined in section \ref{subsec:stability}.
\par
In section \ref{sec:4} we analyse the case of a monotone loading $\ell$. We exhibit a more explicit characterization of Visco-Energetic solutions, in term of the monotone envelopes of the one-sided global slopes. This characterization, in a suitable sense, generalizes \eqref{eq:23} and \eqref{eq:24}.

\section{Preliminaries} \label{sec:2} Throughout this section, $[a,b]\subseteq\R$ and
\[
(X,\|\cdot\|_X)\quad\text{ will be a finite dimensional normed vector space.}
\]
We first recall the key elements of the rate-independent system $(X,\cE,\Psi)$ along with the main definitions of Visco-Energetic solutions, their dissipation cost and some useful properties coming from the general theory, \cite{MinSav16}.

\subsection{Rate-independent setting and BV functions} \label{subsec:setting}
Hereafter we consider a rate-independent system $(X,\cE,\Psi)$, where the dissipation potential
\[
\Psi: X \rightarrow [0,+\infty)\text{ is 1-positively homogeneous, convex, with $\Psi(v)>0$ if $v\neq 0$,}
\]
and $\cE$ is a smooth, time dependent energy functional, which we take of the form
\begin{equation}
\cE(t,u):=W(u)-\langle\ell(t),u\rangle
\end{equation}
for some $W\in \rmC^1(X)$ bounded from below with a constant $-\lambda>-\infty$ and $\ell\in \rmC^1\left([a,b];X^*\right)$.  We shall also use the notation $\cP(t,u):=\partial_t\cE(t,u)=-\langle\ell'(t),u\rangle$ for the partial time derivative of $\cE$, and we set
\begin{equation} \label{eq:K*}
K^*:=\partial\Psi(0)=\{w\in X^*:\Psi_*(w)\le 1\}\subset X^*,\quad\text{where $\Psi_*(w):=\sup_{\Psi(v)\le 1}\langle w,v\rangle$}.
\end{equation}
\par
The rate-independent system associated with the energy functional $\cE$ and the dissipation potential $\Psi$ can be formally described by the \emph{rate-independent doubly nonlinear} differential inclusion
\begin{equation}
\partial \Psi(u'(t))+\rmD \cE(t,u(t))\ni 0\quad \text{in $X^*$}\quad\text{ for a.a. $t\in(a,b)$}. \tag{DN}
\end{equation}
\par
It is well known that for nonconvex energies, solutions to \eqref{DN} may exhibit discontinuities in time. Therefore, we shall consider functions of bounded variation pointwise defined in every $t\in[a,b]$, such that the pointwise total variation $\varpsi(u;[a,b])$ is finite, where
\begin{equation} \label{eq:total-variation}
\varpsi(u;[a,b]):=\sup\left\{\sum_{m=1}^M\Psi(u(t_m)-u(t_{m-1})):a=t_0<t_1<\dots<t_M=b\right\}.
\end{equation}
Notice that a function $u\in \BV([a,b];X)$ admits left and right limits at every $t\in [a,b]$:
\begin{equation}
\ul(t):=\lim_{s\uparrow t} u(s),\quad \ur(t):=\lim_{s\downarrow t} u(s),\quad\text{with $\ul(a):=u(a)$ and $\ur(b):=u(b)$}
\end{equation}
and its pointwise jump set $\Ju$ is the at most countable set defined by
\begin{equation}
\Ju:=\{t\in [a,b]: \ul(t)\neq u(t)\text{ or }u(t)\neq \ur(t)\}\supset\text{ess-$\Ju$}:=\{t\in (a,b):\ul(t)\neq\ur(t)\}.
\end{equation}
\par
We denote by $u'$ the distributional derivative of $u$ (extended by $u(a) \in (-\infty,a)$ and by $u(b)$ in $(b,+\infty)$): it is a Radon vector measure with finite total variation $|u'|$ supported in $[a,b]$. It is well known, \cite{Ambrosio-Fusco-Pallara00}, that $u'$ can be decomposed into the sum of its diffuse part $u'_{\mathrm{co}}$ and its jump part $u'_{\rmJ}$:
\[
u'=u'_{\mathrm{co}}+u'_{\rmJ},\quad u'\llcorner \text{ess-$\Ju$},\quad\text{so that $u'_{\mathrm{co}}(\{t\})=0$ for every $t\in[a,b]$.}
\]

\subsection{Visco-Energetic (VE) solutions in the finite-dimensional case}
\label{subsec:Visco-Energetic-Euclidean}
We recall the notion of Visco-Energetic solutions for the rate-independent system $(X,\cE,\Psi)$ introduced in section \ref{subsec:setting}. The first ingredient we need is a \emph{viscous correction}, namely a continuous map $\delta:X\times X\rightarrow [0,+\infty)$, and its associated augmented dissipation
\begin{equation}
\sfD(u,v):=\Psi(v-u)+\delta(u,v)\qquad \text{for every $u,v\in X$ }.
\end{equation} 
As in the energetic framework, \cite{Mielke-Theil-Levitas02,Mielke-Theil04,Mainik-Mielke05}, Visco-Energetic solutions to the rate-independent system $(X,\cE,\Psi)$ are curves $u:[a,b]\rightarrow X$ with bounded variation that are characterized by a \emph{stability condition} and an \emph{energetic balance}. 
\par
Concerning stability, we have a similar inequality, but we have to replace $\Psi$ with the augmented dissipation $\sfD$. More precisely, we will require that for every  $t\notin\Ju$ 
\begin{equation} \label{stability}
\cE(t,u(t))\le \cE(t,v)+\sfD(u(t),v)\quad\text{for every $v\in X$}, \tag{$\rmS_\sfD$}
\end{equation}
which is naturally associated with the $\sfD$ stable set $\SSD$.
\begin{definition}[\sfD-stable set] The $\sfD$-stable set is the subsets of $[a,b]\times X$ 
\begin{equation} \label{eq:SSD}
\SSD:=\left\{(t,u): \cE(t,u)\le \cE(t,v)+\sfD(u,v)\quad\text{for every $v\in X$}\right\}.
\end{equation}
Its section at time $t$ will be denoted with $\SSD(t)$.
\end{definition}
\par
As intuition suggests, not every viscous correction $\delta$ will be admissible for our purpose. A full description of Visco-Energetic solutions and admissible viscous corrections is discussed in \cite{MinSav16}, where the general metric-topological setting is considered. For the sake of simplicity, in this section we will assume that $\delta$ satisfies the following condition
\begin{equation} 
\lim_{v\rightarrow u}\frac{\delta(u,v)}{\Psi(v-u)}=0 \quad\text{for every $u \in  \SSD(t)$,\quad $t\in [a,b].$}
\end{equation}
\par
The \emph{energetic balance} is harder to formulate than stability and we first need to introduce the key concepts of transition cost and augmented total variation associated with the dissipation $\sfD$. 
\par
Hereafter, for every subset $E\subset\R$ we call $E^-:=\inf E$, $E^+:=\sup E$; whenever $E$ is compact, we will denote by $\frH(E)$ the (at most) countable collection of the connected components of the open set $[E^-,E^+]\setminus E$. We also denote by $\frP_f(E)$ the collection of all finite subsets of $E$. 
\par
Concerning the transition cost, the main point is to consider transitions parametrized by continuous maps $\vartheta:E\to X$ defined in arbitrary compact subsets of $\R$ such that $\vartheta(E^-)=\ul(t)$ and $\vartheta(E^+)=\ur(t)$. More precisely, the first ingredient will be a \emph{residual stability function}:
\begin{definition}[Residual stability function]
\label{def:res-stability}
 For every $t\in[a,b]$ and $u\in
  X$ 
  the residual stability function is defined by
  \begin{align}
    \label{eq:108}
    \Res(t,u):&=\sup_{v\in X}
               \{\mathcal{E}(t,u)-\mathcal{E}(t,v)-\sfD(u,v)\} \\
    \label{eq:109}
&=\mathcal{E}(t,u)-\inf_{v\in X}\{\mathcal{E}(t,v)+\sfD(u,v)\}.
\end{align}
\end{definition}
$\Res$ provides a measure of the 
failure of the stability condition \eqref{stability}, since 
for every $u\in X$, $t\in[a,b]$ we get 
\begin{equation}
  \label{eq:62}
  \cE(t,u)\le \cE(t,v)+\sfD(u,v)+\Res(t,u)
\end{equation}
and
\begin{equation}
\Res(t,u)=0\quad \Longleftrightarrow \quad \relax u\in
\SSD(t).\label{eq:63}
\end{equation}

The transition cost is the sum of three contributions, accordingly with the following definition.
\begin{definition}[Transition cost]
  \label{def:transition-cost}
  Let $E\subset \R$ compact and $\vartheta\in \rmC(E;X)$. For every
  $t\in[a,b]$ we define the \emph{transition cost function} $\Cf(t,\vartheta,E)$ by
\begin{equation}
  \label{eq:33}
  \Cf(t,\vartheta,E):=\varpsi(\vartheta,E)+\Cd(\vartheta,E)+\sum_{s\in E\setminus \{E^+ \}}\Res(t,\vartheta(s))
\end{equation}
where the first term is the usual total variation \eqref{eq:total-variation}, 
the second one 
is
\[
\Cd(\vartheta,E):=\sum_{I\in \Holes(E)}\delta(\vartheta(I^- ),\vartheta(I^+ )),
\]
and
the third term is
\[
\sum_{s\in
  E\setminus\{E^+ \}}\Res(t,\vartheta(s)):=\sup\left\{\sum_{s\in
    P}\Res(t,\vartheta(s)): P\in \Pf(E\setminus \{E^+ \})\right\},
\]
with the sum defined as $0$ if $E\setminus \{E^+ \}=\emptyset$.
\end{definition}

We adopt the convention $\Cf(t,\vartheta,\emptyset):=0$. 
It is not difficult to check that the transition cost
$\Cf(t,\vartheta,E)$ is additive with respect to $E$:
\begin{equation}
  \label{eq:195}
  \Cf(t,\vartheta,E\cap[a,c])=
  \Cf(t,\vartheta,E\cap[a,b])+
  \Cf(t,\vartheta,E\cap[b,c])\quad \text{for every }a<b<c.
\end{equation}
It has been proved, \cite[Theorem 6.3]{MinSav16}, that for every $t\in[a,b]$ and for every $\vartheta\in \rmC(E;X)$ 
\begin{equation}\label{eq:crinqualitytheta}
  \cE(t,\vartheta(E^+ ))+\Cf(t,\vartheta,E)\geq \cE(t,\vartheta(E^- )).
\end{equation}
The dissipation cost $\Fd(t,u_0,u_1)$ induced by the function $\Cf$ is
defined by minimizing $\Cf(t,\vartheta,E)$ among all the transitions $\vartheta$
connecting $u_0$ to $u_1$:
\begin{definition}[Jump dissipation cost and augmented total variation] \label{dissipationcost} 
Let $t\in[a,b]$ be fixed and let us consider $u_0, u_1\in X$. We set 
\begin{equation} \label{eq:dissipationcost}
\Fd(t,u_0,u_1):=\inf\left\{\Cf(t, \vartheta,E): E\Subset \R,\ 
\vartheta\in \rmC(E; X),\  \vartheta(E^- )=u_0,\ \vartheta(E^+ )=u_1\right\},
\end{equation}
with the incremental dissipation cost $\Delta_\sfc(t,
u_0,u_1):=\Fd(t,u_0,u_1)-\Psi(u_1-u_0)$. We also define
\begin{multline}
    \Jmp{\Delta_\sfc}(u,[a,b]):=\Delta_\sfc
    (a,u(a),\ur(a)) + \Delta_\sfc
    (b,\ul(b),u(b)) 
    \\ 
    +\sum_{t \in \Ju\cap (a,b)}
    \Delta_\sfc
    (t,\ul(t),u(t),\ur(t)),
\end{multline}
and the corresponding augmented total variation $\mVar{\Psi,\sfc}$ is then
\begin{equation} \label{eq:varP}
    \mVar{\Psi,\sfc}(u,[a,b]):=\varpsi(u,[a,b])+
    \Jmp{\Delta_\sfc}(u,[a,b]).
  \end{equation}
\end{definition}
The infimum in \eqref{eq:dissipationcost} is attained whenever there is at least one admissible transition $\vartheta$ with finite cost. In this case, we say that $\vartheta$ is an \emph{optimal transition}.
\begin{definition}[Optimal transitions] Let $t\in[a,b]$ and $u_-$,
  $u_+\in X$. 
We say that a curve $\vartheta\in \rmC(E;X)$, $E$ being a
compact subset of $\R$, is an optimal transition between $u_-$ and $u_+$ if
\begin{equation}
  u_-=\vartheta(E^- ),\quad
  u_+=\vartheta(E^+ ),\quad
  \Fd(t,u_-,u_+)=\Cf(t,\vartheta,E).
\end{equation}
$\vartheta$ is \emph{tight} if for every $I\in \Holes(E)$ 
$\vartheta(I^-)\neq \vartheta(I^+)$.
$\vartheta$ 
is a
\begin{align}
  \text{pure jump transition, if }&E\setminus \{E^-,E^+\}\text{ is discrete,}\\
  \text{sliding transition, if } &\Res(t,\vartheta(r))=0\quad \text{for every $r\in E$},  \\
  \text{viscous transition, if } &\Res(t,\vartheta(r))>0\quad \text{for every $r\in E\setminus \{E^{\pm}\}$} \label{eq:viscoustransition}.
\end{align}
\end{definition}
Notice that if $\vartheta$ is a transition with finite cost $\Cf(t,\vartheta,E)<\infty$, then the set
\begin{equation}
E_{\Res}:=\{r\in E \setminus E^+: \Res(t,	\vartheta(r)>0\}\quad\text{is discrete, i.e. all its points are isoltaed}.
\end{equation}
\par
With these notions at our disposal, we can now give the precise definition of Visco-Energetic solutions to the rate-independent system $(X,\cE,\Psi,\delta)$.
\begin{definition}[Visco-Energetic (VE) solutions] We say that a curve $u\in \BV ([a,b];X)$ 
  is a 
\emph{Visco-Energetic (VE) solution} of the rate-independent system $(X,\mathcal{E},\Psi,\delta)$ if it satisfies the stability condition
\begin{equation}\label{eq:stability}
  u(t)\in \SSD(t)\quad\text{for every }t\in [a,b]\setminus \Ju,
  \tag{S$_\sfD$}
\end{equation}
and the energetic balance
\begin{equation} \label{energybalance}
\mathcal{E}(t,u(t))+\mVar{\Psi,\sfc}(u,[a,t])=\mathcal{E}(a,u(a))+\int_a^t\cP(s,u(s))\,\rmd s \tag{$\mathrm{E_{\Psi,\sfc}}$}
\end{equation}
for every $t\in[a,b]$.
\end{definition}
\par

Existence of Visco-Energetic solutions in a much more general metric-topological setting  is proved in \cite{MinSav16}. Solutions are obtained as 
a limit of piecewise constant interpolant of 
discrete solutions $U^n_\tau$ obtained by recursively solving the modified time
\emph{Incremental Minimization Scheme}
\begin{equation}
  \label{eq:167}
  \min_{U\in X} \cE(t^n_\tau,U)+\sfD(U^{n-1}_\tau,U).
  \tag{IM$_\sfD$}
\end{equation} 
starting from an initial datum $U_\tau^0\approx u_0$.

\subsection{Some useful properties of VE solutions} \label{subsec:VE-Euclidean-properties}
In this section we collect a list of useful properties of Visco-Energetic solutions and we prove an equivalent characterization in the finite-dimensional setting, involving a doubly nonlinear evolution equation. For more details about these results and their proof we refer to \cite{MinSav16,Minotti16T}.
\par
To simply the notations, we first introduce the \emph{Minimal set}, which is related to the connection of two points through a step of Minimizing Movements.
\begin{definition}[Moreau-Yosida regularization and Minimal set] Suppose that $\cE$ satisfies \eqref{eq:E-assumption} and $\eqref{eq:W'-assumption}$. The $\sfD$-Moreau-Yosida regularization $\cY:[a,b]\times \R\to \R$ of $\cE$
  is defined by
  \begin{equation}
    \label{eq:111}
    \cY(t,u):=\min_{v\in\R}\cE(t,v)+\sfD(u,v).
  \end{equation}
  For every $t\in [a,b]$ and $u\in \R$ the minimal set is 
  \begin{equation}
    \label{eq:107}
    \rmM(t,u):=\argmin_{\R} \cE(t,\cdot)+\sfD(u,\cdot)=
    \Big\{v\in \R:\cE(t,v)+\sfD(u,v)=\cY(t,u)\Big\}.
  \end{equation}
\end{definition}
Notice that, by \eqref{eq:E-assumption} and \eqref{eq:W'-assumption},
$\rmM(t,u)\neq\emptyset$ for every $t,u$.
It is also clear that  $\cR(t,u)=\cE(t,u)-\cY(t,u)$ and that
\[
u\in \SSD(t)\Longrightarrow u\in \rmM(t,u).
\]
\par
As we have mentioned in the Introduction, when $t\in\Ju$ and $\vartheta:E\rightarrow\R$ is an optimal transition between $\ul(t)$ and $\ur(t)$, $\vartheta$ ``keeps trace'' of the whole construction via \eqref{eq:167}. For instance, when $\vartheta(E)$ is discrete, every point is obtained with a step of Minimizing Movements from the previous one, with the energy frozen the time $t$. The next result, \cite[Theorem 3.16]{MinSav16}, formalises this property and characterizes Visco-Energetic optimal transitions.  Whenever a set $E\subset\R$ is given, we will use the notations
\begin{equation} \label{eq:44}
r_E^-:=\sup\{E\cap (-\infty,r)\}\cup \{E^-\},\qquad r_E^+:=\inf\{E\cap(r,+\infty)\}\cup\{E^+\}.
\end{equation}
\begin{theorem} \label{prop:2}
 A curve $\vartheta\in \rmC(E,\R)$ with $\vartheta(E)\ni
  u(t)$ is an optimal transition between $\ul(t)$ and $\ur(t)$ satisfying
\begin{equation} \label{eq:180}
\cE(t,\ul(t))-\cE(t,\ur(t))=\Cf(t,\vartheta,E)
\end{equation}
  if and only if it satisfies 
  \begin{equation}
    \label{eq:120}
    \mVar\Psi(\vartheta,E\cap[r_0,r_1])\le
    \cE(t,\vartheta(r_0))-\cE(t,\vartheta(r_1))\quad
    \text{for every }r_0,r_1\in E,\ r_0\le r_1,
  \end{equation}
  and
  \begin{equation}
    \label{eq:121}
    \vartheta(r)\in \rmM(t,\vartheta(r^-_E))\quad \text{for every
    }r\in E\setminus \{E^-\}.
  \end{equation}
\end{theorem}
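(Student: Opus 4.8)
The plan is to reduce the equivalence to three ingredients: the universal inequality \eqref{eq:crinqualitytheta}, the additivity \eqref{eq:195} of the transition cost, and the reformulation of membership in the minimal set obtained from \eqref{eq:107} and $\Res(t,u)=\cE(t,u)-\cY(t,u)$,
\begin{equation*}
  v\in\rmM(t,u)\quad\Longleftrightarrow\quad \Psi(v-u)+\delta(u,v)+\Res(t,u)=\cE(t,u)-\cE(t,v).
\end{equation*}
First I would note that, since $\Fd(t,\ul(t),\ur(t))=\inf\Cf\ge\cE(t,\ul(t))-\cE(t,\ur(t))$ by \eqref{eq:crinqualitytheta}, the conjunction ``$\vartheta$ optimal between $\ul(t)$ and $\ur(t)$'' and \eqref{eq:180} is equivalent to the single equality $\Cf(t,\vartheta,E)=\cE(t,\vartheta(E^-))-\cE(t,\vartheta(E^+))$. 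Applying \eqref{eq:crinqualitytheta} to the (at most) three sub-transitions cut out by any $r_0\le r_1$ in $E$ and summing the telescoping lower bounds, this global equality upgrades to the localized equalities
\begin{equation*}
  \Cf(t,\vartheta,E\cap[r_0,r_1])=\cE(t,\vartheta(r_0))-\cE(t,\vartheta(r_1))\qquad\text{for all }r_0\le r_1\text{ in }E;
\end{equation*}
the theorem then reduces to the equivalence between this family of equalities and \eqref{eq:120}--\eqref{eq:121}.

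For the implication ``localized equalities $\Rightarrow$ \eqref{eq:120}, \eqref{eq:121}'': inequality \eqref{eq:120} is immediate, since its left-hand side is bounded by $\Cf(t,\vartheta,E\cap[r_0,r_1])$, the extra contributions $\Cd$ and the residual sum in \eqref{eq:33} being nonnegative. For \eqref{eq:121}, fix $r\in E\setminus\{E^-\}$ and put $r':=r^-_E$. If $r'<r$, then $(r',r)$ is a connected component of $[E^-,E^+]\setminus E$, so $E\cap[r',r]=\{r',r\}$ and the localized equality on $[r',r]$ expands to $\Psi(\vartheta(r)-\vartheta(r'))+\delta(\vartheta(r'),\vartheta(r))+\Res(t,\vartheta(r'))=\cE(t,\vartheta(r'))-\cE(t,\vartheta(r))$, which by the reformulation above is exactly $\vartheta(r)\in\rmM(t,\vartheta(r'))$. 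If $r'=r$, then $r$ is a left-accumulation point of $E$, and using the discreteness of $E_{\Res}$, the continuity of $\Res(t,\vartheta(\cdot))$ on $E$, and the closedness of the relation $v\in\rmM(t,u)$, one obtains $\vartheta(r)\in\rmM(t,\vartheta(r))$, again \eqref{eq:121}.

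For the converse, assume \eqref{eq:120}--\eqref{eq:121}; by \eqref{eq:crinqualitytheta} it suffices to prove $\Cf(t,\vartheta,E)\le\cE(t,\vartheta(E^-))-\cE(t,\vartheta(E^+))$. At each hole $I\in\Holes(E)$ one has $(I^+)^-_E=I^-$, so \eqref{eq:121} at $I^+$ together with the reformulation gives $\Cf(t,\vartheta,\{I^-,I^+\})=\cE(t,\vartheta(I^-))-\cE(t,\vartheta(I^+))$: holes are ``exact''. On each maximal closed subinterval $J\subseteq E$, every interior point is a left-accumulation point, so \eqref{eq:121} forces $\vartheta(r)\in\rmM(t,\vartheta(r))$, hence $\vartheta(r)\in\SSD(t)$ and $\Res(t,\vartheta(r))=0$ there; therefore the cost on $J$ reduces to $\varpsi(\vartheta,J)$, which is bounded by the energy drop across $J$ via \eqref{eq:120}. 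Summing the hole contributions and the sliding contributions, using additivity \eqref{eq:195}, a telescoping of the energy drops, and a limiting argument to pass from finite subfamilies to the (countable) family of holes and sliding pieces, yields the desired inequality, whence (with \eqref{eq:crinqualitytheta}) the global equality and thus optimality with \eqref{eq:180}. I expect this assembly to be the main obstacle: the delicate point is the bookkeeping of the residual terms — in particular the residual at $\ul(t)=\vartheta(E^-)$, which may be strictly positive since $\ul(t)$ need not be stable — and the proof that, within $E\setminus\{E^-,E^+\}$, $\Res(t,\vartheta(\cdot))$ is supported only on left endpoints of holes, so that in the telescoping it is exactly absorbed by the hole identities. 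This rests on the discreteness of $E_{\Res}$, continuity of $\Res(t,\vartheta(\cdot))$, and propagation of stability from the interior of a sliding piece to its endpoints via the closedness of $v\in\rmM(t,u)$; the use of \eqref{eq:121} in full strength is at the holes, the use of \eqref{eq:120} is on the sliding pieces, and everything else is soft.
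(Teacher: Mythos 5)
The paper does not prove Theorem~\ref{prop:2}: it is recalled verbatim from \cite[Theorem~3.16]{MinSav16}, so there is no internal proof to compare against. Judged on its own merits, your reduction of ``optimal $+$ \eqref{eq:180}'' to the single identity $\Cf(t,\vartheta,E)=\cE(t,\vartheta(E^-))-\cE(t,\vartheta(E^+))$, the upgrade to localized equalities via additivity \eqref{eq:195} and the universal bound \eqref{eq:crinqualitytheta}, and the whole forward direction (hole case by expanding $\Cf$ on a two--point set; left--accumulation case via finiteness of $\sum\Res$, discreteness of $E_{\Res}$, and closedness of $\SSD(t)$) are essentially correct. The converse has the right outline but, as you yourself flag, its assembly is where the real work lies, and your sketch leaves two substantive gaps.

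First, the decomposition into ``maximal closed subintervals'' is too coarse. If $E$ is Cantor--like, such subintervals are singletons, yet $E\setminus\{E^\pm\}$ contains many points that are right--accumulated and left--isolated (right endpoints of holes); for such $s$, \eqref{eq:121} applied at $s$ only constrains $\Res(t,\vartheta(s^-_E))$, not $\Res(t,\vartheta(s))$, and there is no ``interior of a sliding piece'' from which to propagate. The fix is what you hint at with ``closedness of $v\in\rmM(t,u)$'', but applied differently than in the forward direction: for any right--accumulated $s\in E\setminus\{E^+\}$ (including $s=E^-$) pick $r_n\downarrow s$ in $E$, note $s\le r_n^-\le r_n\to s$ so both $\vartheta(r_n^-)\to\vartheta(s)$ and $\vartheta(r_n)\to\vartheta(s)$, apply \eqref{eq:121} at $r_n$ and pass to the limit via the closedness of the graph $\{(u,v):v\in\rmM(t,u)\}$ to obtain $\vartheta(s)\in\rmM(t,\vartheta(s))$, i.e.\ $\Res(t,\vartheta(s))=0$. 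This shows $\Res$ vanishes off the hole left--endpoints without assuming discreteness of $E_{\Res}$, which you cannot do in this direction since finiteness of $\Cf$ is what you are proving. Second, the final step is not ``soft'': the hole identities $\Psi+\delta+\Res=\cE(t,\vartheta(I^-))-\cE(t,\vartheta(I^+))$ do not telescope with $\varpsi(\vartheta,E)$ because the holes need not fill $[E^-,E^+]$. What is actually needed is a monotone--comparison argument: setting $F(s):=\cE(t,\vartheta(E^-))-\cE(t,\vartheta(s))$ and $G(s):=\varpsi(\vartheta,E\cap[E^-,s])$ on $E$ (both nondecreasing, with $F-G$ nondecreasing by \eqref{eq:120}), extending affinely across the holes, and comparing the induced Stieltjes measures $\mathrm dG\le\mathrm dF$ on $E$ to obtain
\begin{equation*}
\varpsi(\vartheta,E)-\sum_{I\in\Holes(E)}\Psi\big(\vartheta(I^+)-\vartheta(I^-)\big)\;\le\;\big[\cE(t,\vartheta(E^-))-\cE(t,\vartheta(E^+))\big]-\sum_{I\in\Holes(E)}\big[\cE(t,\vartheta(I^-))-\cE(t,\vartheta(I^+))\big],
\end{equation*}
which together with the hole identities yields $\Cf(t,\vartheta,E)\le\cE(t,\vartheta(E^-))-\cE(t,\vartheta(E^+))$. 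This is in spirit the same mechanism as Lemma~\ref{le:elementary} (which the paper invokes for the analogous passage in Claim~3 of Theorem~\ref{thm:main-theorem}); without it, or an equivalent, the converse is not established.
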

In some situations, the first inequality \eqref{eq:180} can be proved thanks to the following elementary lemma, whose proof is analogous to \cite[Lemma 6.1]{MinSav16}
\begin{lemma}
  \label{le:elementary}
  Let $E\subset \R$ be a compact set with $E^- <E^+ $, let $L(E)$ be the
  set of 
  limit
  points of $E$. 
  We consider a function $f:E\to \R$ lower semicontinuous 
  and continuous on the left and a function $g\in \rmC(E)$ 
  strictly increasing, satisfying the 
  following two conditions:\\
  i) for every $I\in \Holes(E)$ 
  \begin{equation}
    \label{eq:65}
    \frac{f(I^+ )-f(I^- )}
    {g(I^+ )-g(I^- )}\ge 1;
  \end{equation}
  ii) for every $t\in L(E)$ which is an accumulation point of
  $L(E)\cap (-\infty,t)$ we have
  \begin{equation}
    \label{eq:22}
    \limsup_{s\up t,\ s\in L(E)} \frac{f(t)-f(s)}
    {g(t)-g(s)}\ge 1.
  \end{equation}
  Then the map $s\mapsto f(s)-g(s)$ is non decreasing in $E$; in particular
  \begin{equation}
    \label{eq:52}
    f(E^+ )-f(E^- )\ge g(E^+ )-g(E^- ).
  \end{equation}
\end{lemma}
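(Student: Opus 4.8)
The plan is to reduce \eqref{eq:52}, and in fact the full monotonicity of $s\mapsto f(s)-g(s)$ on $E$, to a \emph{strict} version of the statement, and then run a ``first bad point'' argument, carefully sorting the ways in which $E$ can approach that point from the left. As a first step I would note that, since $g$ is non-decreasing, it suffices to prove that for each fixed $\eps\in(0,1)$ the function $h_\eps:=f-(1-\eps)g$ is non-decreasing on $E$; letting $\eps\downarrow0$ then gives $f(s_1)-f(s_0)\ge g(s_1)-g(s_0)$ for all $s_0\le s_1$ in $E$, which is what we want. The point of this perturbation is that \eqref{eq:65} and \eqref{eq:22} become strict for $h_\eps$: since $g(I^+)>g(I^-)$, \eqref{eq:65} yields $h_\eps(I^+)-h_\eps(I^-)\ge\eps\,(g(I^+)-g(I^-))>0$ for every $I\in\Holes(E)$, and subtracting the constant $1-\eps$ from the difference quotients in \eqref{eq:22} gives $\limsup_{s\up t,\,s\in L(E)}\big(h_\eps(t)-h_\eps(s)\big)\big/\big(g(t)-g(s)\big)\ge\eps>0$ at every $t\in L(E)$ which is an accumulation point of $L(E)\cap(-\infty,t)$. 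Note also that $h_\eps$ is still lower semicontinuous and continuous on the left.

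Next I would argue by contradiction: if $h_\eps$ is not non-decreasing there are $r_0<r_1$ in $E$ with $h_\eps(r_0)>h_\eps(r_1)$, and I fix $c$ with $h_\eps(r_1)<c<h_\eps(r_0)$. By lower semicontinuity the set $\{s\in E\cap[r_0,r_1]:h_\eps(s)\le c\}$ is compact and contains $r_1$, hence
\[
\tau:=\min\{s\in E\cap[r_0,r_1]:h_\eps(s)\le c\}
\]
is well defined; it satisfies $h_\eps(\tau)\le c$, and $\tau>r_0$ because $h_\eps(r_0)>c$. By minimality of $\tau$ one has $h_\eps(s)>c\ge h_\eps(\tau)$ for every $s\in E\cap[r_0,\tau)$, so $h_\eps$ ``drops'' at $\tau$ coming from the left; the contradiction will come from inspecting how $E$ sits just to the left of $\tau$.

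I would then distinguish cases for $\tau$. If $\tau$ is isolated from the left in $E$, then $I:=\big(\sup\{s\in E:s<\tau\},\,\tau\big)$ is an element of $\Holes(E)$ with $I^+=\tau$ and $I^-\in E\cap[r_0,\tau)$, so the strict form of \eqref{eq:65} gives $h_\eps(\tau)=h_\eps(I^+)>h_\eps(I^-)>c$, contradicting $h_\eps(\tau)\le c$. Otherwise $\tau$ is a left accumulation point of $E$; if moreover $\tau$ is an accumulation point of $L(E)\cap(-\infty,\tau)$, the strict form of \eqref{eq:22} applies at $\tau$, but for $s\in L(E)$ with $s\up\tau$ we eventually have $s\in[r_0,\tau)$, so $h_\eps(\tau)-h_\eps(s)\le c-h_\eps(s)<0$ while $g(\tau)-g(s)>0$, forcing the relevant $\limsup$ to be $\le0$ — again a contradiction.

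The hard part, which I expect to be the main obstacle, is the remaining case: $\tau$ a left accumulation point of $E$ that is \emph{not} an accumulation point of $L(E)\cap(-\infty,\tau)$, because then no hole ends at $\tau$ (so \eqref{eq:65} does not apply there) and \eqref{eq:22} is vacuous at $\tau$. To handle it I would use the structure of $E$ near $\tau$: there is $\eta>0$ with $(\tau-\eta,\tau)\cap L(E)=\emptyset$, so every point of $E$ in $(\max\{r_0,\tau-\eta\},\tau)$ is isolated; since $E$ still accumulates $\tau$ from the left, a short order-theoretic argument shows that $E\cap[a_1,\tau)=\{a_1<a_2<\cdots\}$ for suitable $a_1$, with $a_n\up\tau$ and each $(a_n,a_{n+1})$ a hole of $E$. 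Applying the strict form of \eqref{eq:65} to these holes makes $n\mapsto h_\eps(a_n)$ strictly increasing, so $h_\eps(a_n)\ge h_\eps(a_1)>c$ for all $n$; on the other hand, left-continuity of $h_\eps$ at $\tau$ forces $h_\eps(a_n)\to h_\eps(\tau)\le c$, a contradiction. This exhausts all cases, so $h_\eps$ is non-decreasing, and letting $\eps\downarrow0$ completes the proof. (Lower semicontinuity enters only to make $\tau$ a genuine minimum, and left-continuity only in this last case.)
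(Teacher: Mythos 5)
Your proof is correct. The paper does not supply its own argument for this lemma --- it refers the reader to \cite[Lemma~6.1]{MinSav16} --- so a line-by-line comparison is not possible, but your strategy is the natural one: perturb to $h_\eps=f-(1-\eps)g$ so that both \eqref{eq:65} and \eqref{eq:22} become strict, then take a first bad point
$\tau=\min\{s\in E\cap[r_0,r_1]:h_\eps(s)\le c\}$
(well defined because $h_\eps$ is lower semicontinuous and $E$ is compact) and rule out all three ways $E$ can approach $\tau$ from the left. The delicate case, your Case~C, is precisely the one a hasty argument would overlook, and you handle it correctly: if $\tau$ is a left accumulation point of $E$ but $(\tau-\eta,\tau)\cap L(E)=\emptyset$, closedness of $E$ forces $E\cap(\max\{r_0,\tau-\eta\},\tau)$ to be an increasing sequence $a_1<a_2<\cdots\uparrow\tau$ of consecutive hole endpoints, the strict form of \eqref{eq:65} gives $h_\eps(a_n)\ge h_\eps(a_1)>c$ for all $n$, and left-continuity of $f$ (hence of $h_\eps$) at $\tau$ yields $h_\eps(\tau)=\lim_n h_\eps(a_n)>c$, contradicting $h_\eps(\tau)\le c$. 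Each hypothesis --- lower semicontinuity (existence of $\tau$), left-continuity (Case~C), strict monotonicity of $g$ (nondegenerate difference quotients), compactness of $E$ (closedness used throughout) --- is invoked exactly where it is needed, and the case split is exhaustive, so the argument closes.
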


\par

The following proposition, a consequence of \eqref{eq:crinqualitytheta}, is useful to prove existence of VE solutions since it gives some sufficient conditions. 
\begin{proposition}[Sufficient criteria for VE solutions]
  \label{prop:leqinequality} Let 
  $u\in \BV([a,b];X)$ be a curve satisfying the
  stability condition \eqref{stability}.
  Then $u$ is a \VE\ solution of the
rate-independent system 
$(X,\mathcal{E},\Psi,\delta)$ if and only if it satisfies 
one of the following equivalent characterizations:
\begin{enumerate}[i)]
\item 
$u$ satisfies the $(\Psi,\sfc)$-energy-dissipation inequality
\begin{equation} \label{leqinequality}
    \mathcal{E}(b,u(b))+\mVar{\Psi,\sfc}(u,[a,b])\leq\mathcal{E}(a,u(a))+\int_a^b\cP(s,u(s))\rmd
    s. 
  \end{equation}
\item $u$ satisfies the $\sfd$-energy-dissipation inequality
  \begin{equation}
    \cE(t,u(t))+\varpsi(u,[s,t])\leq \cE(s,u(s))+\int_s^t\cP(r,u(r))\rmd
    r\quad\text{for all $s\le t\in[a,b]$}\label{eq:110}
\end{equation}
and the following jump conditions at each point $t\in \Ju$
\begin{align}\label{Jve}
  \tag{$\mathrm{J_{VE}}$}
\begin{split}
\mathcal{E}(t,u(t\leftl ))-\mathcal{E}(t,u(t))=\Fd(t,u(t\leftl ),u(t)), \\
\mathcal{E}(t,u(t))-\mathcal{E}(t,u(t\rightl ))=\Fd(t,u(t),u(t\rightl )), \\
\mathcal{E}(t,u(t\leftl ))- \mathcal{E}(t,u(t\rightl ))=\Fd(t,u(t\leftl ),u(t\rightl )).
\end{split} \end{align}
\end{enumerate}
\end{proposition}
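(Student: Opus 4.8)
The plan is to fix, once and for all, the standing hypothesis that $u\in\BV([a,b];X)$ satisfies the stability condition \eqref{stability}, and to prove separately the two equivalences ``$u$ is \VE\ $\Leftrightarrow$ \eqref{leqinequality}'' and ``$u$ is \VE\ $\Leftrightarrow$ \eqref{eq:110}+\eqref{Jve}''. The common engine is the \emph{lower} energy--dissipation estimate valid for any stable curve,
\begin{equation}\label{eq:lowerest-sketch}
  \cE(t,u(t))+\mVar{\Psi,\sfc}(u,[s,t])\ \ge\ \cE(s,u(s))+\int_s^t\cP(r,u(r))\,\rmd r\qquad\text{for all }a\le s\le t\le b,
\end{equation}
which I would establish first. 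Set $g(r):=\cE(r,u(r))=W(u(r))-\langle\ell(r),u(r)\rangle$, a $\BV$ function since $W\in\rmC^1$; the chain rule for $\BV$ compositions (cf.\ \cite{Ambrosio-Fusco-Pallara00}) gives the identity
\[
  g(t)-g(s)=\int_s^t\cP(r,u(r))\,\rmd r+\int_{[s,t]}\langle\rmD\cE(r,u(r)),\rmd u'_{\mathrm{co}}\rangle+\sum_{r\in\Ju\cap[s,t]}\big(\cE(r,\ur(r))-\cE(r,\ul(r))\big),
\]
where the intermediate pointwise values $u(r)$ cancel by telescoping and only the outer half of the jump survives at $r=s,t$. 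For the diffuse integral, testing \eqref{stability} at $r\notin\Ju$ with $v=u(r)+hw$ and letting $h\downarrow0$ — here one uses that the viscous correction obeys $\delta(u,v)/\Psi(v-u)\to0$ as $v\to u$ for $u\in\SSD(r)$ — gives $-\rmD\cE(r,u(r))\in\partial\Psi(0)=K^*$, hence $\langle\rmD\cE(r,u(r)),\cdot\rangle\ge-\Psi(\cdot)$; since $|u'_{\mathrm{co}}|(\Ju)=0$ this bound holds $|u'_{\mathrm{co}}|$-a.e.\ and integrates to $\int_{[s,t]}\langle\rmD\cE(r,u(r)),\rmd u'_{\mathrm{co}}\rangle\ge-\Var{\Psi}^{\mathrm{co}}(u,[s,t])$, $\Var{\Psi}^{\mathrm{co}}$ denoting the diffuse part of the $\Psi$-variation. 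For the jump sum, \eqref{eq:crinqualitytheta} applied to transitions realizing $\Fd(r,\ul(r),u(r))$ and $\Fd(r,u(r),\ur(r))$, together with the fact that the contribution of $r$ to $\mVar{\Psi,\sfc}$ equals $\Fd(r,\ul(r),u(r))+\Fd(r,u(r),\ur(r))$ — which is how $\varpsi(u,\{r\})+\Delta_\sfc(r,\ul(r),u(r),\ur(r))$ unfolds after invoking the additivity \eqref{eq:195} of $\Cf$ under concatenation, with one summand being $0$ at $r=a,b$ — shows this contribution is $\ge\cE(r,\ul(r))-\cE(r,\ur(r))$. Summing both estimates and inserting them into the chain-rule identity yields \eqref{eq:lowerest-sketch}.

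With \eqref{eq:lowerest-sketch} in hand, ``$u$ \VE\ $\Rightarrow$ \eqref{leqinequality}'' is just \eqref{energybalance} at $t=b$ relaxed to $\le$. For the converse, fix $t$: by additivity of $\mVar{\Psi,\sfc}$ and of the integral, \eqref{leqinequality} reads $\cE(b,u(b))+\mVar{\Psi,\sfc}(u,[a,t])+\mVar{\Psi,\sfc}(u,[t,b])\le\cE(a,u(a))+\int_a^b\cP(r,u(r))\,\rmd r$; bounding the $[t,b]$-part below by \eqref{eq:lowerest-sketch} gives $\cE(t,u(t))+\mVar{\Psi,\sfc}(u,[a,t])\le\cE(a,u(a))+\int_a^t\cP(r,u(r))\,\rmd r$, and comparison with \eqref{eq:lowerest-sketch} on $[a,t]$ forces equality, i.e.\ \eqref{energybalance}. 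Hence $u$ is \VE\ iff \eqref{leqinequality} holds.

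It remains to match \eqref{eq:110}+\eqref{Jve}. If $u$ is \VE\ then \eqref{energybalance} holds on every $[s,t]$ by additivity, and $\mVar{\Psi,\sfc}\ge\varpsi$ gives \eqref{eq:110} immediately. For \eqref{Jve}, fix $\bar s\in\Ju$ and let $t\downarrow\bar s$ in \eqref{energybalance} on $[\bar s,t]$: the diffuse part and the jump terms at points of $(\bar s,t]$ tend to $0$ (the total variation being finite), $\cE(t,u(t))\to\cE(\bar s,\ur(\bar s))$ and $\mVar{\Psi,\sfc}(u,[\bar s,t])\to\Fd(\bar s,u(\bar s),\ur(\bar s))$, the left-endpoint jump contribution; this gives the second identity of \eqref{Jve}, $s\uparrow\bar s$ on $[s,\bar s]$ gives the first, and the third follows by adding the two and comparing $\Fd(\bar s,\ul(\bar s),\ur(\bar s))\le\Fd(\bar s,\ul(\bar s),u(\bar s))+\Fd(\bar s,u(\bar s),\ur(\bar s))$ (concatenation, via \eqref{eq:195}) with the opposite inequality from \eqref{eq:crinqualitytheta}. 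Conversely, assume \eqref{eq:110}+\eqref{Jve}. Restricting \eqref{eq:110} to any jump-free subinterval gives there $\int\langle\rmD\cE(r,u(r)),\rmd u'_{\mathrm{co}}\rangle\le-\Var{\Psi}^{\mathrm{co}}$; summing over the connected components of $[a,b]\setminus\Ju$ (which together carry all of $|u'_{\mathrm{co}}|$) and combining with the reverse bound from \eqref{stability} pins the diffuse integral to $-\Var{\Psi}^{\mathrm{co}}(u,[a,b])$. Meanwhile \eqref{Jve} turns each jump contribution $\Fd(r,\ul(r),u(r))+\Fd(r,u(r),\ur(r))$ into $\cE(r,\ul(r))-\cE(r,\ur(r))$. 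Substituting both into the chain-rule identity for $g$ on $[a,t]$ produces \eqref{energybalance} with equality for every $t$, so $u$ is \VE; by transitivity \eqref{leqinequality} and \eqref{eq:110}+\eqref{Jve} are equivalent as well.

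The only genuinely delicate point is the jump bookkeeping inside \eqref{eq:lowerest-sketch} and in the shrinking-interval limits: one must keep track that the pointwise value $u(\bar s)$ is in general distinct from $\ul(\bar s)$ and $\ur(\bar s)$ and enters all identities only through telescoping cancellations, and verify that the three-point incremental cost in $\mVar{\Psi,\sfc}$ is exactly $\Fd(\bar s,\ul(\bar s),u(\bar s))+\Fd(\bar s,u(\bar s),\ur(\bar s))-\varpsi(u,\{\bar s\})$, which rests on the additivity \eqref{eq:195} of $\Cf$ along concatenations of transitions. Everything else — the infinitesimal form of \eqref{stability}, the additivity and diffuse/atomic splitting of $\mVar{\Psi,\sfc}$, and the limit passages — is routine.
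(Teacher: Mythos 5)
The paper itself gives no proof of this proposition: it is stated as imported from \cite{MinSav16,Minotti16T}, with the remark that it is ``a consequence of \eqref{eq:crinqualitytheta}.'' Your reconstruction follows exactly that hint, so there is no alternative route to compare against; what I can assess is correctness and structure, and on that front your plan is the standard and right one. The engine — establish the lower chain-rule estimate \eqref{eq:lowerest-sketch} for any stable curve by combining the $\BV$ chain rule for $r\mapsto\cE(r,u(r))$, the infinitesimal form of \eqref{stability} (which correctly uses that $\delta(u,v)/\Psi(v-u)\to 0$ to get $-\rmD\cE(r,u(r))\in K^*$ and hence the diffuse bound $\langle\rmD\cE,\cdot\rangle\ge-\Psi(\cdot)$), and \eqref{eq:crinqualitytheta} for the jump terms — then squeeze to turn the one-sided inequalities \eqref{leqinequality} or \eqref{eq:110}+\eqref{Jve} into the exact balance \eqref{energybalance}. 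The bookkeeping at a jump $r$, namely that the contribution to $\mVar{\Psi,\sfc}$ is $\Fd(r,\ul(r),u(r))+\Fd(r,u(r),\ur(r))$, is correct (though it is just the definition of $\Delta_\sfc(r,\ul,u,\ur)$; \eqref{eq:195} is actually needed for the triangle inequality $\Fd(\ul,\ur)\le\Fd(\ul,u)+\Fd(u,\ur)$ rather than here). The shrinking-interval limits producing \eqref{Jve}, with $t\notin\Ju$ chosen along the approximating sequence, are fine.

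The one place you should tighten is the final implication (ii)$\Rightarrow$VE, where you write ``summing over the connected components of $[a,b]\setminus\Ju$ (which together carry all of $|u'_{\mathrm{co}}|$).'' For a generic $\BV$ curve, $\Ju$ may be dense in some subinterval, in which case $[a,b]\setminus\Ju$ has no nondegenerate connected components there, yet $|u'_{\mathrm{co}}|$ can still charge that subinterval. The statement as phrased is therefore false verbatim, and the argument needs the standard measure-theoretic repair: from \eqref{eq:110}, \eqref{Jve} and the chain rule on an arbitrary $[s,t]$ you obtain
\[
  0\ \le\ F([s,t]):=\int_{[s,t]}\langle\rmD\cE(r,u(r)),\rmd u'_{\mathrm{co}}\rangle+\Var_\Psi^{\mathrm{co}}(u,[s,t])\ \le\ \Jmp_{\Delta_\sfc}(u,[s,t]),
\]
where $F$ is a nonnegative \emph{diffuse} measure and $\Jmp_{\Delta_\sfc}$ is a finite \emph{purely atomic} measure on $\Ju$. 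Fix $\eps>0$, pick a finite set $F_\eps\subset\Ju$ carrying all but $\eps$ of the atomic mass, enclose each point of $F_\eps$ in a tiny open interval with non-jump endpoints, and apply the bound on the finitely many remaining closed intervals: their $\Jmp_{\Delta_\sfc}$-sum is $<\eps$ and so is their $F$-sum; since $F$ is diffuse, the mass on the tiny intervals vanishes as they shrink, so $F([a,b])<\eps$ for every $\eps$, i.e.\ $F\equiv 0$, which is exactly $\int\langle\rmD\cE,\rmd u'_{\mathrm{co}}\rangle=-\Var_\Psi^{\mathrm{co}}$ and closes the argument. With that correction the proposal is complete and consistent with what the paper intends.
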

\par
Another simple property concerns the behaviour of Visco-Energetic solutions with respect to restrictions and concatenation. The proof is trivial.
\begin{proposition}[Restriction and concatenation principle] \label{lem:1} The following properties hold:
\begin{enumerate}
\item The restriction of a Visco-Energetic solution in $[a,b]$ to an interval $[\alpha,\beta]\subseteq [a,b]$ is a Visco-Energetic solution in $[\alpha,\beta]$;
\item If $a=t_0<t_1<t_{m-1}<t_m=b$ is a subdivision of $[a,b]$ and $u:[a,b]\rightarrow\R$ is Visco-Energetic solution on each one of the intervals $[t_{j-1},t_j]$, then $u$ is a Visco-Energetic solution in $[a,b]$.
\end{enumerate}
\end{proposition}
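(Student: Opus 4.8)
The plan is to verify directly that the two defining properties of a Visco-Energetic solution — the pointwise stability \eqref{eq:stability} and the energetic balance \eqref{energybalance} — are preserved under restriction and under concatenation along a finite subdivision. The whole argument reduces to two elementary facts: the additivity of the augmented total variation $\mVar{\Psi,\sfc}$ along finite subdivisions, and the observation that one-sided limits of $\sfD$-stable states are again $\sfD$-stable.

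First I would establish that for $a\le\alpha<\gamma<\beta\le b$
\[
\mVar{\Psi,\sfc}(u,[\alpha,\beta])=\mVar{\Psi,\sfc}(u,[\alpha,\gamma])+\mVar{\Psi,\sfc}(u,[\gamma,\beta]),
\]
so that, iterating, $\mVar{\Psi,\sfc}$ is additive along any finite partition of a subinterval of $[a,b]$. In \eqref{eq:varP} the plain variation $\varpsi$ is additive by \eqref{eq:total-variation}; for the jump part $\Jmp{\Delta_\sfc}$ the only terms needing a check are the two produced at $\gamma$ by the right-hand side, namely $\Delta_\sfc(\gamma,\ul(\gamma),u(\gamma))+\Delta_\sfc(\gamma,u(\gamma),\ur(\gamma))$: this equals the interior contribution $\Delta_\sfc(\gamma,\ul(\gamma),u(\gamma),\ur(\gamma))$ when $\gamma\in\Ju$, and it vanishes when $\gamma\notin\Ju$, since then $\ul(\gamma)=u(\gamma)=\ur(\gamma)$ and $\Delta_\sfc(\gamma,w,w)=\Fd(\gamma,w,w)-\Psi(0)=0$ by the constant transition.

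Next I would record the auxiliary remark that, since $\Ju$ is at most countable and the inequality defining $\SSD$ in \eqref{eq:SSD} passes to the limit (continuity of $\cE$ and $\sfD$), approximating a time by non-jump times from the right, resp. from the left, gives $\ur(t)\in\SSD(t)$ for every $t\in[a,b)$ and $\ul(t)\in\SSD(t)$ for every $t\in(a,b]$. With this, stability transfers at once. For the restriction to $[\alpha,\beta]\subseteq[a,b]$, a time at which the restricted curve is not a jump point is either interior to $[\alpha,\beta]$ — hence not in $\Ju$, so $u(t)\in\SSD(t)$ directly — or an endpoint, say $t=\alpha$ with $\alpha<\beta$ and $u(\alpha)=\ur(\alpha)$, whence $u(\alpha)=\ur(\alpha)\in\SSD(\alpha)$; symmetrically at $t=\beta$. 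For the concatenation, if $u$ is a Visco-Energetic solution on each $[t_{j-1},t_j]$ and $t\in[a,b]\setminus\Ju$, then $t$ lies in some $[t_{j-1},t_j]$ and, being a non-jump point of $u$, it is not a jump point of the restriction of $u$ to that interval either, so the stability of that restricted solution yields $u(t)\in\SSD(t)$.

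Finally the energetic balance. For the restriction I would subtract \eqref{energybalance}, written on $[a,b]$, at the instants $\sigma\in[\alpha,\beta]$ and $\alpha$; the additivity $\mVar{\Psi,\sfc}(u,[a,\sigma])-\mVar{\Psi,\sfc}(u,[a,\alpha])=\mVar{\Psi,\sfc}(u,[\alpha,\sigma])$ turns this difference into exactly \eqref{energybalance} on $[\alpha,\beta]$. For the concatenation, fixing $t\in[a,b]$ with $t\in[t_{k-1},t_k]$, I would add the identities \eqref{energybalance} written on $[t_{j-1},t_j]$ at the right endpoint $t_j$ for $1\le j\le k-1$ and on $[t_{k-1},t_k]$ at $t$: the energy terms telescope, the time integrals add to $\int_a^t\cP(s,u(s))\,\rmd s$, and the variations add to $\mVar{\Psi,\sfc}(u,[a,t])$ by additivity, giving \eqref{energybalance} on $[a,t]$; since $t$ is arbitrary, this is the claim. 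I do not expect a genuine obstacle: the proof is essentially bookkeeping, the only mildly delicate point being the behaviour of the jump set at the interface and boundary points, which is precisely what the stability of one-sided limits of $\SSD$-stable states takes care of.
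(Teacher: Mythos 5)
Your proof is correct and spells out exactly the routine verification — additivity of $\mVar{\Psi,\sfc}$ across finite partitions (using $\Fd(t,w,w)=0$ when $\gamma\notin\Ju$) together with closure of $\SSD(t)$ under one-sided limits — that the paper dismisses as trivial without giving an argument. There is no different route to compare against, and your bookkeeping of the jump set at interface and endpoint times is the right place to be careful.
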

\par
In our finite-dimensional setting it is possible to give another sufficient criterium for Visco-Energetic solutions, more precisely a characterization through the stability condition \eqref{stability}, a doubly nonlinear differential inclusion, and the Jump condition \eqref{Jve}. This result will be the starting point for our discussion in the one-dimensional case.
\begin{theorem}[Characterization of VE solutions]
  \label{prop:differential-characterization} 
  A curve $u\in \BV([a,b];X)$ is a Visco-Energetic solution of the rate-independent system $(X,\mathcal{E},\Psi,\delta)$ if and only if it satisfies the stability condition \eqref{stability}, the doubly nonlinear differential inclusion
  \begin{equation} \label{eq:DN0}
  \partial\Psi\left(\frac{\rmd u'_{\mathrm{co}}}{\rmd\mu}(t)\right)+\rmD W(u(t))\ni\ell(t)\quad\text{for $\mu$-a.e. $t\in(a,b)$,\quad $\mu:=\mathscr{L}^1+|u'_{\mathrm{co}}|$} \tag{$\mathrm{DN}_0$}
  \end{equation}
  and the jump conditions \eqref{Jve} at every $t\in\Ju$:
\begin{align}
  \tag{$\mathrm{J_{VE}}$}
\begin{split}
\mathcal{E}(t,u(t\leftl ))-\mathcal{E}(t,u(t))=\Fd(t,u(t\leftl ),u(t)), \\
\mathcal{E}(t,u(t))-\mathcal{E}(t,u(t\rightl ))=\Fd(t,u(t),u(t\rightl )), \\
\mathcal{E}(t,u(t\leftl ))- \mathcal{E}(t,u(t\rightl ))=\Fd(t,u(t\leftl ),u(t\rightl )).
\end{split} \end{align}
\end{theorem}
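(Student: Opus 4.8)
The plan is to read the characterization off from Proposition~\ref{prop:leqinequality}, item~(ii): a curve $u\in\BV([a,b];X)$ that satisfies the stability condition \eqref{stability} is a Visco-Energetic solution if and only if it satisfies the $\sfd$-energy-dissipation inequality \eqref{eq:110} together with the jump conditions \eqref{Jve}. Since \eqref{stability} and \eqref{Jve} occur on both sides of the asserted equivalence, it is enough to prove that, for a curve $u$ satisfying \eqref{stability} and \eqref{Jve}, the inequality \eqref{eq:110} is equivalent to the doubly nonlinear inclusion \eqref{eq:DN0}. I would first record the elementary bound $\Fd(t,u_0,u_1)\ge\Psi(u_1-u_0)$, which is immediate from Definition~\ref{def:transition-cost}; hence \eqref{Jve} already forces, at every $t\in\Ju$, the inequalities
\begin{equation*}
\cE(t,\ul(t))-\cE(t,u(t))\ \ge\ \Psi(u(t)-\ul(t)),\qquad
\cE(t,u(t))-\cE(t,\ur(t))\ \ge\ \Psi(\ur(t)-u(t)),
\end{equation*}
so that the ``jump part'' of \eqref{eq:110} is automatically in force and \eqref{eq:110} reduces to its purely diffuse content.

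Next I would extract the infinitesimal form of stability. For $t\in[a,b]\setminus\Ju$ we have $u(t)\in\SSD(t)$, i.e.\ $\cE(t,u(t))\le\cE(t,u(t)+hw)+h\,\Psi(w)+\delta(u(t),u(t)+hw)$ for all $w\in X$ and $h>0$; dividing by $h$, letting $h\downarrow 0$, and using $\cE(t,\cdot)\in\rmC^1$ together with the admissibility hypothesis $\delta(u,v)=o(\Psi(v-u))$ as $v\to u$ on $\SSD(t)$, one obtains the local stability $\langle\ell(t)-\rmD W(u(t)),w\rangle\le\Psi(w)$ for every $w$, that is $\ell(t)-\rmD W(u(t))\in\partial\Psi(0)=K^*$. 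Since $\Ju$ is countable and $u'_{\mathrm{co}}$ is non-atomic, $\mu(\Ju)=0$ for $\mu=\mathscr{L}^1+|u'_{\mathrm{co}}|$, so this holds for $\mu$-a.e.\ $t\in(a,b)$, at $\mu$-a.e.\ of which the Radon--Nikodym density $\tfrac{\rmd u'_{\mathrm{co}}}{\rmd\mu}(t)$ is also defined.

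The core is a chain rule: $t\mapsto\cE(t,u(t))$ is $\BV$ with left and right limits $\cE(t,\ul(t))$ and $\cE(t,\ur(t))$, and by the $\rmC^1$ chain rule along $\BV$ curves \cite{Ambrosio-Fusco-Pallara00} its distributional derivative is $\cP(t,u(t))\,\mathscr{L}^1+\langle\rmD W(u(t))-\ell(t),u'_{\mathrm{co}}\rangle$ on the diffuse part, plus the atoms of amplitude $\bigl(\cE(t,\ur(t))-\cE(t,\ul(t))\bigr)$ at $t\in\Ju$. Combining this with the decomposition $\varpsi(u,[s,t])=\int\Psi\bigl(\tfrac{\rmd u'_{\mathrm{co}}}{\rmd\mu}\bigr)\,\rmd\mu+\sum_{r\in\Ju\cap(s,t)}\bigl(\Psi(u(r)-\ul(r))+\Psi(\ur(r)-u(r))\bigr)$, with the first paragraph, and cancelling the $\int_s^t\cP$ terms, the inequality \eqref{eq:110} turns out to be equivalent to
\begin{equation*}
\bigl\langle\ell(t)-\rmD W(u(t)),\ \tfrac{\rmd u'_{\mathrm{co}}}{\rmd\mu}(t)\bigr\rangle\ \ge\ \Psi\bigl(\tfrac{\rmd u'_{\mathrm{co}}}{\rmd\mu}(t)\bigr)\qquad\text{for $\mu$-a.e.\ }t .
\end{equation*}
For the implication ``\eqref{eq:110}$\Rightarrow$'' the pointwise inequality follows by localising \eqref{eq:110} on small intervals around a $\mu$-a.e.\ point $t_0\notin\Ju$ and differentiating with respect to $\mu$; for the converse one re-sums the chain-rule identity and uses $\Fd\ge\Psi$ in \eqref{Jve} to absorb the jump terms. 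Finally, the local stability of the previous paragraph gives the opposite inequality $\langle\ell(t)-\rmD W(u(t)),w\rangle\le\Psi(w)$ for every $w$; hence the displayed inequality is equivalent to the equality $\langle\ell(t)-\rmD W(u(t)),\tfrac{\rmd u'_{\mathrm{co}}}{\rmd\mu}(t)\rangle=\Psi\bigl(\tfrac{\rmd u'_{\mathrm{co}}}{\rmd\mu}(t)\bigr)$ $\mu$-a.e., which together with $\ell(t)-\rmD W(u(t))\in\partial\Psi(0)$ is exactly $\ell(t)-\rmD W(u(t))\in\partial\Psi\bigl(\tfrac{\rmd u'_{\mathrm{co}}}{\rmd\mu}(t)\bigr)$, i.e.\ \eqref{eq:DN0}. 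Running this argument in both directions, and invoking Proposition~\ref{prop:leqinequality} once more, yields the theorem.

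I expect the main obstacle to be the careful bookkeeping in the chain-rule step: $W$ is only $\rmC^1$ and nonconvex, so one must separate cleanly the diffuse and jump parts of $\rmD\bigl(\cE(\cdot,u(\cdot))\bigr)$ and of $\varpsi(u,\cdot)$, keep track of the pointwise values $u(t)$ at jump points (which may differ from $\ul(t)$ and $\ur(t)$, giving the two sub-drops and the two extra $\Psi$-terms that \eqref{Jve} controls), and justify both the localisation/differentiation of \eqref{eq:110} with respect to $\mu$ and the routine reduction of \eqref{eq:110} to its values at continuity points of $u$. The additivity \eqref{eq:195} of the transition cost and the internal consistency of \eqref{Jve} are the structural facts that make this bookkeeping go through.
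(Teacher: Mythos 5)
Your proposal is correct and follows essentially the same route as the paper: both reduce the theorem, via Proposition~\ref{prop:leqinequality}, to showing that for a stable curve satisfying \eqref{Jve} the energy--dissipation inequality \eqref{eq:110} is equivalent to the differential inclusion \eqref{eq:DN0}. The paper dispatches this second step by noting $\varpsi(u,[a,b])\le\mVar{\Psi,\sfc}(u,[a,b])$ (so that a VE solution is in particular a \emph{local solution}) and citing the differential characterization in \cite[Prop.~2.7]{Mielke-Rossi-Savare12}; your chain-rule argument --- separating diffuse from jump contributions, extracting the local stability $\ell(t)-\rmD W(u(t))\in K^*$ from \eqref{stability}, and combining the two one-sided inequalities into the subdifferential equality --- is precisely a self-contained re-derivation of that cited result.
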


\begin{proof}
From the definition of the viscous dissipation cost $\cost(t,\ul(t),\ur(t))$, \ref{def:transition-cost}, it is immediate to check that
\[
\varpsi\big(u,[a,b]\big)\le \mVar{\Psi,\sfc}\big(u,[a,b]\big),
\]
so that Visco-Energetic solutions are in particular \emph{local solutions}, in the sense of \cite{Mielke-Rossi-Savare12}. This differential characterization is therefore an immediate consequence of Proposition \ref{prop:leqinequality} and \cite[Proposition 2.7]{Mielke-Rossi-Savare12}.
\end{proof}

\subsection{The one-dimensional setting} \label{subsec:one-dimensional-setting}
From now on we consider the particular case $X=\R$, which we also identify with $X^*$. We will denote by $v^+$, $v^-$ the positive and the negative part of $v\in\R$.
\paragraph{Dissipation.} A dissipation potential is a function of the form
\begin{equation}
\Psi(v):=\alpha_+v^++\alpha_-v^-,\quad v\in\R,\quad\text{for some $\alpha_\pm>0$.}
\end{equation}
Hence, we have
\[
\partial\Psi(v)=\begin{cases} \alpha_+ \quad &\text{if $v>0$,} \\ [-\alpha_-,\alpha_+] \quad&\text{if $v=0$,} \\ -\alpha_- \quad&\text{if $v<0$} \end{cases}\qquad\text{for all $v\in\R$},
\]
and
\begin{equation} \label{eq:40}
K^*=[-\alpha_-,\alpha_+],\quad\Psi_*(w)=\frac{1}{\alpha_+}w^+ +\frac{1}{\alpha_-}w^-\quad\text{for all $w\in\R$}.
\end{equation}

\paragraph{Energy functional.} The energy is given by a function $\cE:[a,b]\times\R\rightarrow\R$ of the form
\begin{equation} \label{eq:E-assumption}
\cE(t,u):=W(u)-\ell(t)u
\end{equation}
with $\ell\in \rmC^1([a,b])$ and $W:\R\rightarrow\R$ such that
\begin{equation} \label{eq:W'-assumption}
W\in \rmC^1(\R),\quad\lim_{x\rightarrow-\infty}W'(x)=-\infty,\quad\lim_{x\rightarrow+\infty}W'(x)=+\infty.
\end{equation}

\paragraph{Viscous correction.} The admissible one-dimensional viscous correction is a continuous map $\delta:\R\times\R\rightarrow [0,+\infty)$ which satisfies
\begin{equation}\label{eq:delta} \tag{$\delta1$}
\lim_{v\rightarrow u}\frac{\delta(u,v)}{|v-u|}=0\qquad\text{for every $u\in \SSD(t)$},\quad t\in [a,b],
\end{equation}
and the reverse triangle inequality 
\begin{equation} \label{eq:delta2} \tag{$\delta2$} 
\delta(u_0,u_1)> \delta(u_0,v)+\delta(v,u_1)\qquad \text{for every $u_0< v< u_1$}.
\end{equation}
We still use the notation $\sfD(u,v):=\Psi(v-u)+\delta(u,v)$ for the augmented dissipation.
\par

\begin{remark}[Admissible viscous corrections]\upshape
Assumption \eqref{eq:delta} is necessary for the general theory of Visco-Energetic solutions; \eqref{eq:delta2} will be crucial for our one-dimensional characterization (see section \ref{subsec:main-theorem}). However, these assumptions are quite natural: they are satisfied, for example, if we choose $\delta$ of the form
\[
\delta(u,v)=f(\Psi(u-v))\qquad\text{with $f$ positive, strictly convex, with $\lim_{r\rightarrow 0}\frac{f(r)}{r}=0$}.
\]
For instance, the standard choice $\delta(u,v)=\frac{\mu}{2}(v-u)^2$, for some positive parameter $\mu$, is admissible. This particular case will be analysed with some example in sections \ref{sec:3} and \ref{sec:4}.
\end{remark}

\section{Visco-Energetic solutions of rate-independent systems in \texorpdfstring{$\R$}{R}} \label{sec:3}
As we have underlined in the Introduction, Visco-Energetic solutions of the rate-independent system $(\R,\cE,\Psi,\delta$) are intermediate between energetic, which correspond to the choice $\delta\equiv 0$, and Balanced Viscosity solutions, which corresponds to a choice of $\delta=\delta(\tau)$, depending of $\tau$, in \eqref{eq:167}  of the form
\[
\delta_\tau(u,v):=\mu(|\tau|)\delta(u,v),\qquad \mu:(0,+\infty)\rightarrow(0,+\infty), \quad\lim_{r\rightarrow 0}\mu(r)=+\infty.
\]
Guided by the characterizations of this two cases, given in \cite{Rossi-Savare13} in a similar one-dimensional setting and recalled in the Introduction, we obtain a full characterization for the visco-energetic case. In particular, the main results of \cite{Rossi-Savare13} can be recover for some choices of $\delta$.

\subsection{One-sided global slopes with a \texorpdfstring{$\delta$}{d} correction} \label{subsec:stability}
One-sided global slopes are used in \cite{Rossi-Savare13} to give a one-dimensional characterization of Energetic solutions of the rate-independent system $(\R,\cE,\Psi)$. We recall their definitions:
\begin{equation}\label{eq:one-sided-slopes}
\Wir(u):= \inf_{z>u} \frac{W(z)-W(u)}{z-u},\qquad \Wsl(u):= \sup_{z<u} \frac{W(z)-W(u)}{z-u},
\end{equation}
where the subscripts $\mathsf{ir}$ and $\mathsf{sl}$ stands for \emph{inf-right} and \emph{sup-left} respectively.
\par
In this section we introduce a generalization of $W'_{\sfi\sfr}$ and $W'_{\sfs\sfl}$, and we prove some important properties. These slopes allow us to give an equivalent, one-dimensional, characterization of the $\sfD$-Stability \eqref{eq:stability}. 
\begin{definition} For every $u\in \R$ we define the one-sided global slopes with a $\delta$ correction 
\begin{gather}
\Wird\delta(u):=\inf_{z>u}\left\{\frac{1}{z-u}\Big(W(z)-W(u)+\delta(u,z)\Big)\right\}, \label{eq:Wir} \\
\Wsld\delta(u):=\sup_{z<u}\left\{\frac{1}{z-u}\Big(W(z)-W(u)+\delta(u,z)\Big)\right\}.
\label{eq:Wsl}
\end{gather}
\end{definition}
For simplicity, we will still use the notations $\Wir$ and $\Wsl$ instead of $\Wird0$ and $\Wsld0$ when $\delta\equiv 0$. From \eqref{eq:delta} it follows that the modified global slopes satisfy 
\begin{equation} \label{eq:4}
\Wird\delta(u)\le W'(u)\le \Wsld\delta(u), \quad \text{for every $u \in \R$}
\end{equation}
and it is not difficult to check they are continuous. Indeed, it is sufficient to introduce the continuous function $V: \R\times \R\rightarrow \R$
\[
V(u,z):=\begin{cases} W'(u) &\quad \text{if $z=u$}, \\ \frac{1}{z-u}\Big(W(z)-W(u)+\delta(u,z)\Big)&\quad \text {if $z\neq u$}, \end{cases}
\]
and observe, e.g. for $\Wird\delta$, that 
\[
\Wird\delta (u) = \min \{V(u,z): z\ge u \}
\]
and for $u$ in a bounded set the minimum is attained in a compact set thanks to \eqref{eq:W'-assumption}.
\par
If $\delta$ is big enough, in a suitable sense, equalities hold in \eqref{eq:4}. An important result is stated in the following proposition. 
\begin{proposition} \label{prop:1}
Suppose that $W$ satisfies the  $\delta$-convexity assumption
\begin{equation} \label{eq:41}
W(v)\le (1-t)W(u)+tW(w)+t(1-t)\delta(u,w),\qquad v=(1-t)u+tw,\quad t\in[0,1].
\end{equation}
Then the one-sided slopes coincides with the usual derivative:
\[
\Wird{\delta}(u)=W'(u)=\Wsld{\delta}(u)\qquad \text{for every $u\in\R$}.
\]
\end{proposition}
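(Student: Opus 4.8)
The plan is to deduce the assertion from the already-established estimate \eqref{eq:4}, $\Wird{\delta}(u)\le W'(u)\le\Wsld{\delta}(u)$, by proving the two reverse inequalities $\Wird{\delta}(u)\ge W'(u)$ and $\Wsld{\delta}(u)\le W'(u)$. Both will come from testing the $\delta$-convexity inequality \eqref{eq:41} at interior points collapsing onto $u$ and using the $\rmC^1$ regularity of $W$.

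For the lower bound on $\Wird{\delta}$, I would fix $u$ and an arbitrary $z>u$, and apply \eqref{eq:41} with $w=z$ to the points $v_t:=(1-t)u+tz$, $t\in(0,1)$. Subtracting $W(u)$ and dividing by $v_t-u=t(z-u)>0$ gives
\begin{equation*}
  \frac{W(v_t)-W(u)}{v_t-u}\le \frac{W(z)-W(u)}{z-u}+(1-t)\,\frac{\delta(u,z)}{z-u}.
\end{equation*}
Letting $t\downarrow 0$, so that $v_t\to u^+$, the left-hand side tends to $W'(u)$ because $W\in\rmC^1(\R)$, and the right-hand side tends to $\big(W(z)-W(u)+\delta(u,z)\big)/(z-u)$ by continuity of $\delta$. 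Hence $W'(u)\le \big(W(z)-W(u)+\delta(u,z)\big)/(z-u)$ for every $z>u$; taking the infimum over such $z$ yields $W'(u)\le\Wird{\delta}(u)$, and combined with \eqref{eq:4} this gives $\Wird{\delta}(u)=W'(u)$.

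The argument for $\Wsld{\delta}$ is the mirror image: fix $z<u$ and set again $v_t:=(1-t)u+tz\in(z,u)$. The inequality obtained from \eqref{eq:41} with $w=z$ before dividing is the same as above, but now $v_t-u=t(z-u)<0$, so dividing reverses the inequality; letting $t\downarrow 0$ (now $v_t\to u^-$) and using $W\in\rmC^1$ one gets $W'(u)\ge\big(W(z)-W(u)+\delta(u,z)\big)/(z-u)$ for every $z<u$. Taking the supremum over $z<u$ and invoking \eqref{eq:4} gives $\Wsld{\delta}(u)=W'(u)$.

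There is no genuine obstacle here; the only points needing a little care are the sign bookkeeping when dividing by $z-u<0$ in the $\Wsld{\delta}$ computation, and the remark that the collapse $v_t\to u$ extracts exactly $W'(u)$ purely from the $\rmC^1$ assumption \eqref{eq:W'-assumption}, so that no extra information about the behaviour of $\delta$ near the diagonal — beyond what is already encoded in \eqref{eq:4} — is required.
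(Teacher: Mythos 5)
Your proof is correct and follows essentially the same route as the paper: test the $\delta$-convexity inequality \eqref{eq:41} at $v_t=(1-t)u+tz$, divide by $v_t-u$, and let $t\downarrow 0$ so that $W\in\rmC^1$ produces $W'(u)$ on the left, then take the infimum (resp.\ supremum) over $z$ and conclude with \eqref{eq:4}. The only cosmetic quibble is that passing to the limit on the right-hand side needs nothing but $(1-t)\to 1$, so invoking continuity of $\delta$ is superfluous; this does not affect the argument.
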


\begin{proof}
We prove the first equality since the second one is analogous. Let us take $v,w\in\R$ with $u<v\le w$ and $t\in (0,1]$ such that $v=(1-t)u+tw$. Then
\begin{multline*}
\frac{W(v)-W(u)}{v-u}\le \frac{(1-t)W(u)+tW(w)+ t(1-t)\delta(u,w)-W(u)}{t(w-u)} = \\
\frac{W(w)-W(u)+\delta(u,w)}{w-u} -t\frac{\delta(u,w)}{w-u}\le  \frac{W(w)-W(u)+\delta(u,w)}{w-u}.
\end{multline*}
Passing to the limit as $v\downarrow u$ we get
\[
W'(u)\le \frac{W(w)-W(u)+\delta(u,w)}{w-u}\quad \text{for every $w>u$.}
\]
Now it is enough to take to infimum over $w>u$.
\end{proof}

\begin{remark} \upshape An interesting consequence of Proposition \ref{prop:1} is that if $W$ satisfies the usual $\lambda$-convexity assumption
\begin{equation} \label{eq:lambda-convex}
W(v)\le (1-t)W(u)+tW(w)-\lambda t(1-t)(w-u)^2,\qquad v=(1-t)u+tw,\quad t\in[0,1]
\end{equation}
for some $\lambda\in \R$, then for every $\mu\ge \min\{-\lambda, 0\}$ we can choose $\delta(u,w):=\mu(w-u)^2$ and \eqref{eq:41} holds. In particular, if $W$ is convex, for every admissible viscous correction $\delta$ the one-sided global slopes coincide with the usual derivative.
\end{remark}

\par
If $\Wird\delta (u)<W'(u)$ in a point $u\in\R$, then from \eqref{eq:W'-assumption} there exist $z>u$ which attains the infimum in \eqref{eq:Wir}. The same happens if $\Wsld\delta(u)>W'(u)$. Moreover, from the continuity of $W$ and of the global slopes, there exist a neighborhood of $u$ in which the strict inequality holds. In this neighborhood $\Wird\delta$, or $\Wsld\delta$, are decreasing.

\begin{proposition} \label{prop: Wir.prop} Let $I \subseteq \R$ be an open interval such that 
\[
\Wird\delta (v) < W'(v) \quad\text{(resp. $\Wsld\delta(v)>W'(v)$)} \qquad \text{ for every $v\in I$}. 
\]
Then $\Wird\delta$ (resp. $\Wsld\delta$) is decresing on $I$.
\end{proposition}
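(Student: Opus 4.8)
\emph{Plan.} I would first reduce to the statement for $\Wird\delta$ (the one for $\Wsld\delta$ is symmetric: reflect $u\mapsto -u$, exchange $\inf$ with $\sup$, and use the reverse triangle inequality \eqref{eq:delta2} on triples lying below $u$ instead of above). So fix $u_1<u_2$ in $I$ and aim at $\Wird\delta(u_1)>\Wird\delta(u_2)$. Since $\Wird\delta(u_1)<W'(u_1)$, the coercivity \eqref{eq:W'-assumption} guarantees — exactly as observed right before the statement — that the infimum in \eqref{eq:Wir} is attained at some $z_1=z(u_1)>u_1$. The proof then combines a one‑step monotonicity estimate, valid when this minimizer overshoots $u_2$, with an a priori lower bound on $z(u)-u$ that forces that situation to occur locally; a standard local‑to‑global passage finishes.

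\emph{One-step estimate.} Assume $u_1<u_2<z_1$. Then $z_1$ is admissible in the infimum defining $\Wird\delta(u_2)$ and $u_2$ is admissible in the one defining $\Wird\delta(u_1)$, so
\[
W(z_1)-W(u_2)+\delta(u_2,z_1)\ \ge\ (z_1-u_2)\,\Wird\delta(u_2),\qquad
W(u_2)-W(u_1)+\delta(u_1,u_2)\ \ge\ (u_2-u_1)\,\Wird\delta(u_1).
\]
Writing $W(z_1)-W(u_1)=\bigl(W(z_1)-W(u_2)\bigr)+\bigl(W(u_2)-W(u_1)\bigr)$ and using \eqref{eq:delta2} for $u_1<u_2<z_1$, namely $\delta(u_1,z_1)>\delta(u_1,u_2)+\delta(u_2,z_1)$, one gets
\[
(z_1-u_1)\,\Wird\delta(u_1)=W(z_1)-W(u_1)+\delta(u_1,z_1)>(z_1-u_2)\,\Wird\delta(u_2)+(u_2-u_1)\,\Wird\delta(u_1).
\]
Since $z_1-u_1=(z_1-u_2)+(u_2-u_1)$ and $z_1-u_2>0$, this simplifies to $\Wird\delta(u_1)>\Wird\delta(u_2)$.

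\emph{Bounding the minimizer away and concluding.} Fix a compact interval $[\alpha,\beta]\subset I$. By continuity of $W'$ and of $\Wird\delta$ (established just before Proposition \ref{prop:1}'s remark) and the standing hypothesis $\Wird\delta<W'$ on $I$, the number $\varepsilon_0:=\min_{[\alpha,\beta]}\bigl(W'-\Wird\delta\bigr)$ is positive; by the mean value theorem and uniform continuity of $W'$ there is $\rho_0>0$ such that $\frac{W(z)-W(u)}{z-u}>W'(u)-\tfrac{\varepsilon_0}{2}$ for all $u\in[\alpha,\beta]$ and $u<z\le u+\rho_0$. As $\delta\ge 0$, the quantity $\frac1{z-u}\bigl(W(z)-W(u)+\delta(u,z)\bigr)$ then exceeds $\Wird\delta(u)+\tfrac{\varepsilon_0}{2}$ on $(u,u+\rho_0]$, so any minimizer must satisfy $z(u)>u+\rho_0$ for every $u\in[\alpha,\beta]$. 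Now, given $u_0\in I$, pick $[\alpha,\beta]\subset I$ with $u_0$ interior, take $\rho_0$ as above and $\varepsilon>0$ with $(u_0-\varepsilon,u_0+\varepsilon)\subset[\alpha,\beta]$ and $2\varepsilon<\rho_0$. For $u_1<u_2$ in $(u_0-\varepsilon,u_0+\varepsilon)$ one has $z(u_1)>u_1+\rho_0>u_0+\varepsilon>u_2$, hence the one-step estimate applies and yields $\Wird\delta(u_1)>\Wird\delta(u_2)$. Thus $\Wird\delta$ is strictly decreasing on a neighbourhood of each point of $I$; a continuous function with this property is strictly decreasing on the whole interval (cover any $[a,b]\subset I$ by finitely many such neighbourhoods and chain the inequalities, or take the supremum of the points up to which $\Wird\delta$ is strictly decreasing on $[a,b]$).

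\emph{Main difficulty.} The only delicate point is that, for an arbitrary pair $u_1<u_2$ in $I$, the minimizer $z(u_1)$ could a priori fall back inside $[u_1,u_2]$, and then the one-step estimate does not apply directly; the a priori bound $z(u)>u+\rho_0$ — which relies only on $\delta\ge0$, the $\rmC^1$ regularity of $W$ and the continuity of $\Wird\delta$, not on the finer properties \eqref{eq:delta}–\eqref{eq:delta2} — is exactly what excludes this at the neighbourhood scale, after which the passage to the whole interval is routine.
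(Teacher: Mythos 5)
Your proof is correct, and it takes a genuinely different route from the paper's. The paper fixes $v_1\in I$, takes the minimizer $z>v_1$ of $\Wird\delta(v_1)$, and after an algebraic rearrangement using only the weak consequence $\delta(v_1,z)\ge\delta(v_2,z)$ of \eqref{eq:delta2} shows that the upper right Dini derivative of $\Wird\delta$ at $v_1$ is bounded above by $\tfrac{1}{z-v_1}(\Wird\delta(v_1)-W'(v_1))<0$; it then concludes by invoking a classical monotonicity criterion for Dini derivatives (G\'al). You instead prove a direct finite-difference comparison $\Wird\delta(u_1)>\Wird\delta(u_2)$ whenever the minimizer $z(u_1)$ overshoots $u_2$, exploiting the full strength of the strict reverse triangle inequality \eqref{eq:delta2} on the triple $u_1<u_2<z_1$, and then remove the overshoot restriction by the uniform a priori gap $z(u)-u\ge\rho_0>0$ on compacts (which uses only $\delta\ge 0$, the $\rmC^1$ regularity of $W$ and the continuity of $\Wird\delta$), followed by a routine covering/chaining argument. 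Both are sound; the paper's argument is shorter but leans on an external Dini-derivative theorem, while yours is self-contained and the key one-step estimate is arguably the cleaner computation, at the price of the extra a priori bound and the local-to-global step. One minor observation on your reduction for $\Wsld\delta$: the reflection $u\mapsto -u$ turns \eqref{eq:delta2} into the strict reverse triangle inequality for triples $u_1<v<u_0$ in the \emph{second-slot-smaller} configuration, so it implicitly uses that $\delta$ (or the assumption) behaves symmetrically in its two arguments; the paper's own proof of the $\Wsld\delta$ case has the same implicit reliance, so this is not a defect of your argument relative to the paper.
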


\begin{proof}  Let $v_1\in I$ and let $z>v_1$ be an element that attains the infimum in \eqref{eq:Wir}. Then for every $v_2<z$ we have the inequality
\[
\Wird\delta(v_2)-\Wird\delta(v_1) \le \\ \frac{W(z)-W(v_2)+\delta(v_2,z)}{z-v_2}- \left(\frac{W(z)-W(v_1)+\delta(v_1,z)}{z-v_1}\right).
\]
From \eqref{eq:delta2}, $\delta(v_1,z)\ge\delta(v_2,z)$ so that
\[
\frac{1}{v_2-v_1}\left[\frac{\delta(v_2,z)}{z-v_2}-\frac{\delta(v_1,z)}{z(v_1)-v_1}\right]\le \frac{\delta(v_2,z)}{(z-v_2)(z-v_1)}.
\]
Combining this with the simple identity
\[
\frac{W(z)-W(v_1)}{z-v_1}= \\
\frac{W(z)-W(v_2)}{z-v_2}\left(1-\frac{v_2-v_1}{z-v_1}\right)+\frac{W(v_2)-W(v_1)}{v_2-v_1}\frac{v_2-v_1}{z-v_1}
\]
after a simple computation we obtain
\[
\frac{\Wir (v_2)-\Wir (v_1)}{v_2-v_1} \le 
\frac{1}{z-v_1}\left[\frac{W(z)-W(v_2)+\delta(v_2,z)}{z-v_2}- 
\frac{W(v_2)-W(v_1)}{v_2-v_1}\right].
\]
Passing to the limsup for $v_2\downarrow v_1$ we get
\[
\limsup_{v_2\downarrow v_1} \frac{\Wird\delta (v_2)-\Wird\delta(v_1)}{v_2-v_1}\le \frac{1}{z-v_1}\left(\Wird\delta (v_1) - W'(v_1)\right) < 0.
\]
The claim follows from a classical result concerning Dini derivatives, see \cite{Gal57}.
\end{proof}

\paragraph{Characterizations of $\sfD$-Stability.} 
Taking \eqref{eq:Wir} and \eqref{eq:Wsl} into account, we can formulate a characterization of the global $\sfD$-stability \eqref{stability}. Since the energy is of the form $\cE(t,u)=W(u)-\ell(t)u$, \eqref{stability} is equivalent to
\[
W(u(t))-W(v)-\ell(t)(u(t)-v)\le \Psi(v-u)+\delta(u,v)\quad\text{for every $t\in[a,b]\setminus \Ju$, $v\in \R$}.
\]
Dividing by $u(t)-v$ and taking the infimum over $v>u(t)$, or the supremum over $v<u(t)$, for every $t\in[a,b]\setminus \Ju$ we get the system of inequalities 
\begin{equation} \label{eq:1d-stability}
-\alpha_- \le \ell(t)-\Wsld\delta(u(t))\le \ell(t)-W'(u(t))\le \ell(t)-\Wird\delta(u(t))\le\alpha_+, \tag{$\rmS_{\sfD, \R}$}
\end{equation}
which are the one-dimensional version of the global $\sfD$-stability. The continuity property of the $\delta$-corrected one-sided slopes also yields for every $t\in (a,b)$
\begin{gather} 
-\alpha_- \le \ell(t)-\Wsld\delta(\ur(t))\le \ell(t)-W'(\ur(t))\le \ell(t)-\Wird\delta(\ur(t))<\alpha_+, \label{eq:1d-stabilityright}  \\
-\alpha_- \le \ell(t)-\Wsld\delta(\ul(t))\le \ell(t)-W'(\ul(t))\le \ell(t)-\Wird\delta(\ul(t))<\alpha_+.   \label{eq:1d-stabilityleft}
\end{gather}

\begin{remark}\upshape The stability region $\SSD$ is bigger when $\delta$ increases. If we call
\begin{equation}
\SS_\infty:=\{(t,u)\in [a,b]\times \R: \rmD_u\cE(t,u)\in K^*\},
\end{equation}
where $K^*$ is defined in \eqref{eq:K*}, the set of points which satisfies the \emph{local stability} condition typical of BV solutions, \cite{Mielke-Rossi-Savare12,Mielke-Rossi-Savare13}, it is immediate to check that 
\[
\SSd\subseteq\SSD \subseteq \SS_\infty\qquad\text{for every admissible viscous correction}.
\]
The first inclusion is an equality if $\delta\equiv 0$. If the energy satisfies the $\delta$-convexity property \eqref{eq:41}, or, equivalently, if $\delta$ is chosen big enough, from Propostion \ref{prop:1} we get $\SSD=\SS_\infty$. 

\end{remark}

\subsection{Visco-Energetic Maxwell rule}
After the brief discussion about stability in section \ref{subsec:stability}, we now focus on jumps. In this section we show a relation between the minimal sets \eqref{eq:107} and the one-sided global slopes $\Wird\delta$ and $\Wsld\delta$, along with some geometrical interpretations of the results.
\begin{proposition} \label{prop:3}
Let $t,u\in\R$. Suppose that $z\in \rmM(t,u)$. Then 
\begin{gather}
\Wird\delta(v)\le\frac{W(z)-W(v)}{z-v}+\frac{\delta(v,z)}{z-v}< \ell(t)-\alpha_+ \qquad \text{if $u< v<z$}, \label{eq:8} \\
\Wsld\delta(v)\ge \frac{W(z)-W(v)}{z-v}+\frac{\delta(v,z)}{z-v}> \ell(t)+\alpha_- \qquad \text{if $u> v>z$}, \label{eq:9}
\end{gather}
Moreover, if $u\in \SSD(t)$ the following identities hold:
\begin{gather}
\Wird\delta(u)=\frac{W(z)-W(u)}{z-u}+\frac{\delta(u,z)}{z-u}=\ell(t)-\alpha_+ \qquad \text{if $z>u$},\label{eq:10} \\
\Wsld\delta(u)=\frac{W(z)-W(u)}{z-u}+\frac{\delta(u,z)}{z-u}= \ell(t)+\alpha_- \qquad \text{if $z<u$}. \label{eq:11}
\end{gather}
 \end{proposition}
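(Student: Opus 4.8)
The plan is to exploit the definition of the minimal set $\rmM(t,u)$: $z \in \rmM(t,u)$ means $z$ minimizes $v \mapsto \cE(t,v) + \sfD(u,v) = W(v) - \ell(t)v + \Psi(v-u) + \delta(u,v)$ over $\R$. I will treat the case $z > u$ (so $\Psi(z-u) = \alpha_+(z-u)$), the case $z < u$ being symmetric. The first observation is that for $u < v < z$ the point $z$ remains a competitor in the minimization defining $\rmM(t,v)$ — or more precisely in the infimum defining $\Wird\delta(v)$ — because $z > v$; this gives the first inequality in \eqref{eq:8} immediately from the definition \eqref{eq:Wir}, since
\[
\Wird\delta(v) \le \frac{W(z)-W(v)+\delta(v,z)}{z-v}.
\]
The substantive content is the strict inequality $\frac{W(z)-W(v)+\delta(v,z)}{z-v} < \ell(t) - \alpha_+$ for $u < v < z$.

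To get this, I would compare the value of the functional $v \mapsto \cE(t,v)+\sfD(u,v)$ at $z$ and at $v$ (for $u<v<z$) using that $z$ is the minimizer. Minimality gives $\cE(t,z)+\sfD(u,z) \le \cE(t,v)+\sfD(u,v)$, i.e.
\[
W(z) - \ell(t)z + \alpha_+(z-u) + \delta(u,z) \le W(v) - \ell(t)v + \alpha_+(v-u) + \delta(u,v).
\]
Rearranging, $W(z) - W(v) + \delta(u,z) - \delta(u,v) \le (\ell(t)-\alpha_+)(z-v)$. Now the key point is the strict reverse triangle inequality \eqref{eq:delta2}: since $u < v < z$, we have $\delta(u,z) > \delta(u,v) + \delta(v,z)$, hence $\delta(u,z) - \delta(u,v) > \delta(v,z)$. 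Substituting, $W(z) - W(v) + \delta(v,z) < (\ell(t)-\alpha_+)(z-v)$, and dividing by $z - v > 0$ yields exactly the strict inequality in \eqref{eq:8}. The inequality \eqref{eq:9} follows by the mirror argument with $\alpha_-$ and $z < u$.

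For the identities \eqref{eq:10}, \eqref{eq:11} under the extra hypothesis $u \in \SSD(t)$: first, as noted in the excerpt, $u \in \SSD(t)$ implies $u \in \rmM(t,u)$, so both $u$ and $z$ are minimizers; equating the functional values at $u$ and $z$ (with $z > u$) gives $W(z) - W(u) + \alpha_+(z-u) + \delta(u,z) = \ell(t)(z-u)$, i.e. $\frac{W(z)-W(u)+\delta(u,z)}{z-u} = \ell(t) - \alpha_+$, which is the middle equality in \eqref{eq:10}. It remains to show this common value equals $\Wird\delta(u)$, i.e. that $z$ realizes the infimum in \eqref{eq:Wir}. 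By definition $\Wird\delta(u) \le \frac{W(z)-W(u)+\delta(u,z)}{z-u} = \ell(t)-\alpha_+$; for the reverse, take any $w > u$: stability \eqref{stability} at $v = w$ reads $W(u) - W(w) - \ell(t)(u-w) \le \alpha_+(w-u) + \delta(u,w)$, equivalently $\frac{W(w)-W(u)+\delta(u,w)}{w-u} \ge \ell(t) - \alpha_+$, so the infimum over $w > u$ is $\ge \ell(t)-\alpha_+$. Hence $\Wird\delta(u) = \ell(t)-\alpha_+$, completing \eqref{eq:10}; \eqref{eq:11} is symmetric.

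The only mildly delicate point — not really an obstacle — is bookkeeping the homogeneity of $\Psi$: one must keep the sign of $v-u$ straight so that $\Psi(v-u) = \alpha_+(v-u)$ in the "right" cases and $\alpha_-(u-v)$ in the "left" cases; everything else is a direct application of the minimizing property of $\rmM$, the strict triangle inequality \eqref{eq:delta2}, and the definitions of the $\delta$-corrected slopes together with \eqref{stability}.
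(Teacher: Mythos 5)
Your proof is correct and follows essentially the same approach as the paper: minimality of $z$ combined with the strict reverse triangle inequality $(\delta 2)$ gives the strict inequalities \eqref{eq:8}--\eqref{eq:9}, and stability of $u$ then closes the circle for the identities. The only minor difference is cosmetic — for \eqref{eq:10} you equate the functional values at the two minimizers $u$ and $z$ directly, whereas the paper obtains the same middle equality by letting $v\downarrow u$ in \eqref{eq:8} and sandwiching with \eqref{eq:1d-stability}.
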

 \begin{proof}
Let us consider the case $z>u$. From the minimality of $z$ for every $v\in (u,z)$ we get
\[
W(z)-W(v)-\ell(t)(z-v)\le -\alpha_+(z-v)+\delta(u,v)-\delta(u,z).
\]
Taking $\eqref{eq:delta2}$ into account and dividing by $z-u$ we get
\[
\frac{W(z)-W(v)}{z-v}-\ell(t)< -\alpha_+ -\frac{\delta(v,z)}{z-v},
\]
which proves \eqref{eq:8}. If $u\in \SSD(t)$, we can combine the one dimensional $\sfD$-stability condition \eqref{eq:1d-stability} with \eqref{eq:8}, where we pass to the limit for $v\downarrow u$, and we get 
\[
\Wird\delta(u)\le\frac{W(z)-W(u)}{z-u}+\frac{\delta(u,z)}{z-u}\le\ell(t)-\alpha_+\le \Wird\delta(u),
\]
so that all the previous inequalities are identities and \eqref{eq:10} is proved. The case $z<u$ can be proved in a similar way.
\end{proof}
\begin{remark} \label{rem:1} \upshape Notice that the strict inequality in \eqref{eq:delta2} implies
\begin{align}
\text{if $z\in \rmM(t,u)$, $z>u$, then}\qquad \Wird\delta(v)<\ell(t)-\alpha_+\quad \forall v\in (u,z), \label{eq:16}\\
\text{if $z\in \rmM(t,u)$, $z<u$, then}\qquad \Wsld\delta(v)>\ell(t)-\alpha_-\quad \forall v\in (z,u). \label{eq:17}
\end{align}
In particular $v\not\in \SSD(t)$, since \eqref{eq:16} and \eqref{eq:17} contradict the global stability \eqref{eq:1d-stability}. This inequalities will be one the key ingredients for the characterization Theorem \ref{thm:main-theorem}.
\end{remark}

\paragraph{$\sfD$-Maxwell rule.} Equalities \eqref{eq:10} and \eqref{eq:11} admit a nice geometrical interpretation. Suppose that $u$ is a Visco-Energetic solution, $t\in\Ju$ and that there exist $z\in \rmM(t,\ul(t))$ with $z>\ul(t)$. According to \eqref{eq:1d-stabilityleft}, $\ul(t)$ is stable, so that we can choose $u=\ul(t)$ in \eqref{eq:10} and we get 
\begin{equation} \label{eq:12}
W(z)=W(\ul(t))+(\ell(t)-\alpha_+)(z-\ul(t))-\delta(\ul(t),z).
\end{equation}
This identity is a generalization of the so-called \emph{Mawell rule}: in the energetic case, combining global stability and energetic balance, we easily get $z=\ur(t)$, so that \eqref{eq:12} assume the classical formulation
\begin{equation}
\int_{\ul(t)}^{\ur(t)}\Big( W'(r)-\ell(t)+\alpha_+ \Big)\,\rmd r=0.
\end{equation} 
\par
Considering for simplicity the choice $\delta(u,v):=\frac{\mu}{2}(v-u)^2$, for some parameter $\mu>0$, when $W'(\ul(t))=\ell(t)-\alpha_+$ \eqref{eq:12} can be rewritten in the form
\[
W(z)=W(\ul(t))+W'(\ul(t))(z-\ul(t))-\frac{\mu}{2}(z-\ul(t))^2.
\]
This means that we can have a jump only when the area between the graph $W'$ and the straight line whose slope is $-\mu$ vanishes. If $\mu$ is big enough, then the area is always positive and $\rmM(t,u)=\{u\}$. In this case the description of the jump transition will be more complicated (see section \ref{subsec:main-theorem} and \ref{sec:4} for more details).




\subsection{Main characterization Theorem} \label{subsec:main-theorem}
In this section we exhibit an explicit characterization of Visco-Energetic solutions for a general (i.e. non monotone) external loading $\ell$. This result is the equivalent of \cite[Theorem 3.1]{Rossi-Savare13} and \cite[Theorem 5.1]{Rossi-Savare13} for Energetic and BV solutions. 
\begin{theorem}[1d-characterization of VE solutions] \label{thm:main-theorem} Let $u\in \mathrm{BV}([a,b];\R)$ be a Visco-Energetic solution of the rate-independent system $(\R,\cE,\Psi,\delta)$. Then the the following properties hold:
\begin{enumerate}[a)]
\item $u$ satisfies the 1d-stability condition \eqref{eq:1d-stability} for every $t\in[a,b]\setminus \Ju$ (and therefore \eqref{eq:1d-stabilityright} and \eqref{eq:1d-stabilityleft} as well);
\item $u$ satisfies the following precise formulation of the doubly nonlinear differential inclusion:
\begin{gather}
W'(\ur(t))=\Wird\delta(\ur(t))=\ell(t)-\alpha_+ \quad \text{ for every $t\in \mathrm{supp}\left((u')^+\right)\cap [a,b)$}, \label{eq:equation1} \\
W'(\ur(t))=\Wsld\delta(\ur(t))=\ell(t)+\alpha_- \quad \text{ for every $t\in \mathrm{supp}\left((u')^-\right)\cap [a,b)$}; \label{eq:equation2}
\end{gather} 
\item at each point $t\in \Ju$, $u$ fulfils the jump conditions
\begin{equation} \label{eq:jump1}
\min(\ul(t),\ur(t))\le u(t) \le \max(\ul(t),\ur(t))
\end{equation}
and
\begin{equation} \label{eq:jump2}
\Wird\delta(v)\le \ell(t)-\alpha_+\quad \text{if $\ul(t)< \ur(t)$},\qquad \Wsld\delta(v)\ge \ell(t)+\alpha_-\quad \text{if $\ul(t)>\ur(t)$},
\end{equation}
for every $v$ such that $\min\left(\ul(t),\ur(t)\right)\le v\le \max\left(\ul(t),\ur(t)\right)$. 
\end{enumerate}
\par 
\noindent Conversely, let $u\in\BV([a,b],\R)$ be a curve satisfying \eqref{eq:equation1}, \eqref{eq:equation2}, \eqref{eq:jump1}, \eqref{eq:jump2}, along with the following modified version of a):
\begin{enumerate}[a)]
\item[a')] $u$ satisfies the 1d-stability condition \eqref{eq:1d-stability} for every $t\in (a,b)$.
\end{enumerate}
Then $u$ is a Visco-Energetic solution of the rate-independent system $(\R,\cE,\Psi,\delta)$.
\end{theorem}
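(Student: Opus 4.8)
The plan is to read off both implications from the differential characterization in Theorem~\ref{prop:differential-characterization}: a curve $u\in\BV([a,b];\R)$ is a Visco-Energetic solution of $(\R,\cE,\Psi,\delta)$ if and only if it satisfies the stability condition \eqref{stability}, the doubly nonlinear inclusion \eqref{eq:DN0}, and the jump conditions \eqref{Jve}; moreover \eqref{stability} is equivalent, for $\cE(t,u)=W(u)-\ell(t)u$, to the one-dimensional system \eqref{eq:1d-stability}, as already recorded in Section~\ref{subsec:stability}. For the \emph{direct implication}, item~a) is then immediate, and \eqref{eq:1d-stabilityright}--\eqref{eq:1d-stabilityleft} follow by continuity of $W'$, $\Wird\delta$, $\Wsld\delta$. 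For item~b) I would localise \eqref{eq:DN0}: writing $v:=\frac{\rmd u'_{\mathrm{co}}}{\rmd\mu}$, on $\{v>0\}$ one has $\partial\Psi(v(t))=\{\alpha_+\}$, hence $W'(u(t))=\ell(t)-\alpha_+$; since $(u')^+$ is carried by $\{v>0\}$ up to an $\mathscr L^1$-null set and the countable jump set, a continuity-and-density argument along moving points extends $W'(\ur(t))=\ell(t)-\alpha_+$ to every $t\in\supp((u')^+)\cap[a,b)$. Then $\Wird\delta(\ur(t))\le W'(\ur(t))=\ell(t)-\alpha_+$ by \eqref{eq:4}, while stability forces $\Wird\delta(\ur(t))\ge\ell(t)-\alpha_+$, so both are equalities and \eqref{eq:equation1} holds; \eqref{eq:equation2} is obtained symmetrically from $\{v<0\}$ and $\Wsld\delta$.

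For item~c), fix $t\in\Ju$. The bracketing \eqref{eq:jump1} comes from \eqref{Jve}: adding the first two identities there and comparing with the third yields $\Fd(t,\ul(t),u(t))+\Fd(t,u(t),\ur(t))=\Fd(t,\ul(t),\ur(t))$, and since $\Fd\ge\Psi$ while the strict reverse triangle inequality \eqref{eq:delta2} makes any transition that reverses direction strictly costlier than a monotone one, this can hold only if $u(t)$ lies between $\ul(t)$ and $\ur(t)$. For \eqref{eq:jump2}, assume $\ul(t)<\ur(t)$; by \eqref{Jve} and the attainment statement after Definition~\ref{dissipationcost} there is an optimal transition $\vartheta\in\rmC(E;\R)$ from $\ul(t)$ to $\ur(t)$, which one first checks is nondecreasing, again via \eqref{eq:delta2} and optimality. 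Theorem~\ref{prop:2} gives $\vartheta(r)\in\rmM(t,\vartheta(r_E^-))$ for every $r\in E\setminus\{E^-\}$, so Remark~\ref{rem:1}, i.e.\ \eqref{eq:16}, yields $\Wird\delta(v)<\ell(t)-\alpha_+$ for every $v$ strictly between two consecutive visited values and across every hole $I\in\Holes(E)$; at $\ul(t)$, which is stable, \eqref{eq:10} gives $\Wird\delta(\ul(t))=\ell(t)-\alpha_+$, and at $\ur(t)$ item~b) already gives $\Wird\delta(\ur(t))=\ell(t)-\alpha_+$ since $t\in\supp((u')^+)$. Continuity of $\Wird\delta$ then covers the finitely many remaining visited values and propagates $\Wird\delta\le\ell(t)-\alpha_+$ to all of $[\ul(t),\ur(t)]$; the case $\ul(t)>\ur(t)$ is symmetric with $\Wsld\delta$.

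For the \emph{converse implication} I would verify the three conditions of Theorem~\ref{prop:differential-characterization} (Proposition~\ref{prop:leqinequality} could be used instead). Stability \eqref{stability} holds at $t\in(a,b)\setminus\Ju$ by a'), and at $t\in\{a,b\}\setminus\Ju$ by a') together with continuity of $u$ and of the slopes at the endpoint. The inclusion \eqref{eq:DN0} is checked $\mu$-a.e.\ by cases on the sign of $v(t)$: where $v(t)>0$, $\mu$-a.e.\ such $t$ lies in $\supp((u')^+)\setminus\Ju$, so \eqref{eq:equation1} gives $W'(u(t))=\ell(t)-\alpha_+\in\ell(t)-\partial\Psi(v(t))$; the case $v(t)<0$ is symmetric via \eqref{eq:equation2}; and where $v(t)=0$ one needs $\ell(t)-W'(u(t))\in[-\alpha_-,\alpha_+]$, which is contained in \eqref{eq:1d-stability}. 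Finally, for \eqref{Jve} at $t\in\Ju$, say $\ul(t)<\ur(t)$, I would build a transition from $\ul(t)$ to $\ur(t)$ passing through $u(t)$ (possible by \eqref{eq:jump1}) by iterating the minimal set to the right: $\ul(t)$ is stable, and \eqref{eq:jump2} together with stability of the reached values forces $\Wird\delta=\ell(t)-\alpha_+$ along the way, which by Proposition~\ref{prop:3} supplies at each such value a point of its minimal set strictly to its right and, by the same $\delta$-argument as above, never past $\ur(t)$; concatenating the resulting steps — and, in degenerate cases, countably many of them together with sliding pieces and their limits — produces $\vartheta\in\rmC(E;\R)$ satisfying \eqref{eq:121} by construction. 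Condition \eqref{eq:120} is then obtained from Lemma~\ref{le:elementary} applied with $f=\cE(t,\vartheta(\cdot))$ and $g$ the cumulative cost of $\vartheta$ (hypothesis \eqref{eq:65} across holes from $\vartheta(I^+)\in\rmM(t,\vartheta(I^-))$ and $\cY\le\cE$, hypothesis \eqref{eq:22} at accumulation points from continuity), so Theorem~\ref{prop:2} certifies $\vartheta$ optimal and $\Fd(t,\ul(t),\ur(t))=\cE(t,\ul(t))-\cE(t,\ur(t))$; combining this with the universal bound \eqref{eq:crinqualitytheta} and the subadditivity $\Fd(t,\ul(t),\ur(t))\le\Fd(t,\ul(t),u(t))+\Fd(t,u(t),\ur(t))$ (by concatenation) forces the other two identities in \eqref{Jve}. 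The case $\ul(t)>\ur(t)$ is analogous, and Theorem~\ref{prop:differential-characterization} then concludes.

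I expect the jump analysis to be the main obstacle on both sides. In the direct part it is the step showing that an optimal transition of an increasing jump is monotone and ``tight'' enough that \eqref{eq:jump2} holds at every intermediate value, not merely at the values the transition visits, so that the pointwise consequences of Remark~\ref{rem:1} and the continuity of $\Wird\delta$ can be stitched together. In the converse part it is the construction of an optimal transition purely out of the pointwise data \eqref{eq:jump1}--\eqref{eq:jump2}: controlling its structure (the compact parameter set $E$, its holes, and the sliding and accumulation parts) finely enough to guarantee it actually reaches $\ur(t)$ rather than stalling at an intermediate stable value, and to run Lemma~\ref{le:elementary} so that the transition cost matches the energy drop exactly. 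By contrast, items~a) and~b) and their converses amount to the routine one-dimensional translation, through the $\delta$-corrected slopes, of the standard arguments behind the doubly nonlinear inclusion \eqref{eq:DN0}.
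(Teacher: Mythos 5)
Your plan follows the paper's proof in its essentials: both directions are run through Theorem~\ref{prop:differential-characterization} and the optimal-transition criterion of Theorem~\ref{prop:2}, \eqref{eq:jump1} comes from additivity of the cost, and the two steps you flag at the end as the main obstacles are exactly where the paper spends its effort. However, the sketch has gaps at precisely those places. For item~b), the continuity-and-density argument does not reach jump points $t$ that are isolated in $\supp((u')^+)$: there $u$ is constant on each punctured side of $t$, so there are no nearby ``moving'' times to pass to the limit along, and density gives you nothing at $\ur(t)$. The paper treats non-jump points by density and then handles $t\in\Ju\cap\supp((u')^+)$ by invoking \eqref{eq:jump2} together with Proposition~\ref{prop: Wir.prop}, so you must establish c) first and then feed it back into b). For \eqref{eq:jump2} itself, ``continuity covers the finitely many remaining visited values'' is incorrect: $\vartheta(E)$ is in general infinite (possibly a Cantor-like set, as in the ``double chain'' picture), and at a $v$ which is a right-accumulation point of $\vartheta(E)$ there is no adjacent hole, so neither Remark~\ref{rem:1} nor continuity propagated from holes yields the bound. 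The paper applies \eqref{eq:120} directly at such $v$, getting $\Wird\delta(v)\le\ell(t)-\alpha_+ + \delta(v,v_1)/(v_1-v)$ for $v_1\in\vartheta(E)$ with $v_1\downarrow v$, and then uses \eqref{eq:delta}; this requires $v\in\SSD(t)$, which does hold because finiteness of the $\Res$-sum in the transition cost forces $\Res(t,\vartheta(\cdot))$ to vanish at accumulation points.

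In the converse, the real obstacle is that iterating $\rmM(t,\cdot)$ from $\ul(t)$ can stall at the very first step: when $\Wird\delta(\ul(t))=W'(\ul(t))=\ell(t)-\alpha_+$ and the infimum defining $\Wird\delta(\ul(t))$ is attained only at $\ul(t)$ itself, one has $\rmM(t,\ul(t))=\{\ul(t)\}$ and Proposition~\ref{prop:3} supplies no forward step. The phrase ``concatenating sliding pieces and their limits'' does not resolve this. The paper decomposes $[\ul(t),\ur(t)]$ along the contact set $S=\{v:\Wird\delta(v)=\ell(t)-\alpha_+,\ \Wsld\delta(v)\le\ell(t)+\alpha_-\}$ and, on each component of $[S^-,S^+]\setminus S$, distinguishes the ``initial jump'' case (where $\rmM$ is nontrivial, so your iteration works) from the ``double chain'' case, which it handles by an $\varepsilon$-shift of the starting point followed by a Kuratowski compactness argument on the family of parametrizing sets $E_{k,\varepsilon}$. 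Only once the limit set $E$ is produced this way are \eqref{eq:120} verified via Lemma~\ref{le:elementary} (with $f=-\cE(t,\vartheta(\cdot))$, note the sign) and \eqref{eq:121} verified by construction, essentially as you describe.
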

\par
Since any jump point belongs either to the support of $\left(u'\right)^+$ or of $\left(u'\right)^-$, combining \eqref{eq:equation1}, \eqref{eq:jump1}, \eqref{eq:jump2} and \eqref{eq:equation2}, \eqref{eq:jump1} and \eqref{eq:jump2} we also get at every $t\in \Ju\cap (a,b)$
\begin{gather}
\Wird\delta(\ul)=\Wird\delta(\ur)=W'(\ur)=\ell(t)-\alpha_+\qquad \text{ if $\ul<\ur$}, \label{eq:6}\\
\Wsld\delta(\ul)=\Wsld\delta(\ur)=W'(\ur)=\ell(t)-\alpha_-\qquad \text{ if $\ul>\ur$}, \label{eq:7}
\end{gather}
and this identities still hold in $t=a$ or $t=b$ if $u(a)\in S_\sfD(a)$ or $u(b)\in S_\sfD(b)$.

\begin{remark} \upshape For a full characterization of Visco-Energetic solutions we need that \eqref{eq:1d-stability} holds also when $t\in \Ju$. This condition is required just to recover the first and the second equalities in \eqref{Jve} from the third. 
However, it is quite natural: if $u$ is a Visco-Energetic solution, we can consider the right continuous function 
\[
\tilde{u}\in \BV([a,b],\R)\quad\text{such that $\tilde{u}(t):=\ul(t)$ for every $t\in[a,b]$}.
\]
Then $\tilde{u}$ is still a Visco-Energetic solution and $\tilde{u}(t)$ is stable for every $t\in(a,b]$.
\end{remark}

\begin{proof}[Proof of theorem \ref{thm:main-theorem}] We split the argument in various steps.
\par
 \underline {Claim 1}. \emph{ $\sfD$-stability \eqref{stability} is equivalent to \eqref{eq:1d-stability}}.\newline It is a consequence of the choice $\cE(t,u)=W(u)-\ell(t)u$; see the discussion in section \ref{subsec:stability}.
\par
\underline{Claim 2}. \emph{\eqref{Jve} implies the jump conditions \eqref{eq:jump1} and \eqref{eq:jump2}}.\newline
From the general properties of the viscous dissipation cost, there exist an optimal transition $\vartheta\in \rmC(E;\R)$ connecting $\ul(t)$ and $\ur(t)$, namely
\[
\vartheta(E^-)=\ul(t),\quad\vartheta(E^+)=\ur(t),\quad \Fd(t,\ul(t),\ur(t))=\Cf(t,\vartheta,E).
\]
Since \eqref{Jve} holds, we can apply Theorem \ref{prop:2}. Let us start from the case $\ul(t)<\ur(t)$ and let $v\in [\ul(t),\ur(t)$]. If $v\notin \vartheta(E)$, which is compact, there exist an open interval $I\subset [\ul(t),\ur(t)]\setminus \vartheta(E)$ such that $v\in I$. From \eqref{eq:121}
\[
\vartheta(I^+)\in\rmM (t,\vartheta(I^-)),
\]
so that, by Proposition \ref{prop:3}, we get $\Wird\delta(v)\le \ell(t)-\alpha_+$. By continuity, the inequality still holds if $v\in \vartheta(E)$ is isolated in $\vartheta(E)\cap[v,+\infty)$. Otherwise, $v\in \rmL\big(\vartheta(E)\cap [v,+\infty)\big)$, where $\rmL$ denotes the set of the limit points. From \eqref{eq:120} we have
\[
\cE(t,v)\ge \cE(t,v_1)+\alpha_+(v_1-v)\quad \text{for every $v_1\in \vartheta(E),\quad v_1> v $},
\]
which yields
\[
\Wird\delta(v)\le \frac{W(v_1)-W(v)}{v_1-v}+\frac{\delta(v,v_1)}{v_1-v}\le \ell(t)-\alpha_+ +\frac{\delta(v,v_1)}{v_1-v}.
\]
We can pass to the limit for $v_1\downarrow z$ so that $\eqref{eq:jump2}$ holds in $[\ul(t),\ur(t))$. By continuity, it still holds in $v=\ur(t)$.  The case $\ul(t)>\ur(t)$ can be proved in a similar way.
\par
The property \eqref{eq:jump1} easily follows by summing the identities of the jump conditions \eqref{Jve}, thus obtaining
\[
\cost(t,\ul(t),\ur(t))=\cost(t,\ul(t),u(t))+\cost(t,u(t),\ur(t)),
\]
and considering the additivity of the cost \eqref{eq:195}.
\par

\underline{Claim 3}. \emph{The jump conditions \eqref{eq:jump1}, \eqref{eq:jump2} and a') imply \eqref{Jve}}.\newline
Let us start again with $\ul(t)<\ur(t)$. We still want to apply Theorem \ref{prop:2}: we need to find an admissible transition $\vartheta\in \rmC(E;\R)$ which satisfies \eqref{eq:120} and \eqref{eq:121}. To define such a transition, let us consider
\[
S:=\{v\in [\ul(t),\ur(t)]: \Wird\delta(v)=\ell(t)-\alpha_+\text{ and } \Wsld\delta(v)\le \ell(t)+\alpha_- \}.
\]
The set $S$ is compact, then there exists a sequence of disjoint open intervals $I_k$ such that $[S^-,S^+]\setminus S=\bigcup_{k=0}^\infty I_k$. 
Let us fix for a moment one of these $I_k$. Taking into account assumption \eqref{eq:W'-assumption}, we can have only two possibilities.
\par
\begin{enumerate}[a)]
\item[-] \emph{Case 1: ``The initial jump''}. The infimum in $\Wird\delta(I_k^-)$ is attained in a point $z>I_k^-$.\newline
From $\Wird\delta(I_k^-)=\ell(t)-\alpha_+$ and \eqref{eq:10} we recover the energetic balance
\begin{equation}
\cE(t,z)+\sfD(I_k^-,z)=\cE(t,I_k^-).
\end{equation}
Arguing as in Proposition \ref{prop:3}, $\Wird\delta(v)<\ell(t)-\alpha_+$ for every $v\in(I_k^-,z)$, so that $z\in \overline{I_k}$.
We can thus define by induction the sequence $(u_n^k)$ such that
\[
u^k_0:=z,\qquad u_{n+1}^k=u_n^k\quad\text{if $u_n^k=I_k^+$} \qquad u^k_{n+1}\in \rmM(t,u_n^k) \quad \text{otherwise}.
\]
Notice that from Proposition \ref{prop:3} and Remark \ref{rem:1}, by induction we easily get $u_n^k\in \overline{I_k}$ for every $n\in \N$. Moreover,
\begin{equation}
\Psi(u_{n+1}^k-u_n^k)\le \cE(t,u_n)-\cE(t,u_{n+1}),
\end{equation}
so that $(u_n^k)$ is a Cauchy sequence and then it converges to some $\bar{u}^k\in \overline{I_k}$. From the general properties of the residual stability function
\begin{equation} \label{eq:19}
\Res(t,u_n^k)=\cE(t,u_n^k)-\cE(t,u_{n+1}^k)-\sfD(u_n^k,u_{n+1}^k).
\end{equation}
By passing to the limit in \eqref{eq:19} we get
\[
\Res(t,\bar{u}^k)=0,\quad \text{so that $\bar{u}^k\in S$},
\]
which means $\bar{u}^k\in\{I_k^-;I_k^+\}$. In addition, $\bar{u}^k\neq I_k^-$ since $\cE(t,u_{n+1}^k)<\cE(t,u_n^k)$  every time that $u_{n+1}^k\neq u_n^k$, which implies $\cE(t,\bar{u}^k)<\cE(t,I_k^-)$.
Finally, we conclude $\bar{u}^k=I_k^+$ and we set $E_k:=\bigcup_{n=0}^\infty \{u_n^k\}$.

\item[-] \emph{Case 2: ``The (double) chain''}. $W'(I_k^-)< \frac{W(z)-W(I_k^-)+\delta(I_k^-,z)}{z-I_k^-}$ for every $z>I_k^-$.\newline
In this case $\Wird\delta(I_k^-)=W'(I_k^-)=\ell(t)-\alpha_+$. The energy $\cE(t,u)=W(u)-(W'(I_k^-)+\alpha_+)u$ has negative derivative in $u=I_k^-$, so that it is decreasing in a neighborhood of $I_k^-$. Let us choose $\varepsilon>0$ such that $\cE(t,I_k^-+\varepsilon)<\cE(t,I_k^-)$. We can thus define by induction the following sequence $(u^k_{n,\varepsilon})$: 
\[
u_{0,\varepsilon}^k:=I_k^-+\varepsilon,\qquad u_{n+1,\varepsilon}^k=u_{n,\varepsilon}^k\quad\text{if $u_{n,\varepsilon}^k=I_k^+$}, \qquad u_{n+1,\varepsilon}^k\in\rmM(t,u_{n,\varepsilon}^k)\quad\text{otherwise}.
\]
As in the previous case, this sequence is well defined and it converges to $I_k^+$. In order to pass to the limit for $\varepsilon\downarrow 0$, we apply a compactness argument: we consider the family of sets
\[
E_{k,\varepsilon}:=\bigcup_{n=0}^\infty\{u_{n,\varepsilon}^k\}\cup \{I_k^+\}.
\]
$E_{k,\varepsilon}$ are compact and $E_{k,\varepsilon}\subseteq \overline{I_k}$. We can apply Kuratowski Theorem (see e.g.~\cite{Kuratowski55}): there exists a compact subset $E_k\subseteq \overline{I_k}$ such that, up to a subsequence, $E_{k,\varepsilon}\rightarrow E_k$ in the Hausdorff metric.
It is easy to check, \cite[Lemma 3.11]{MinSav16}, that $E_k^-=I_k^-$, $E_k^+=I_k^+$ and
\begin{equation} \label{eq:28}
z\in \rmM(t,z_{E_k}^-)\qquad\text{for every $z\in E_k$},
\end{equation}
where $z_{E_k}^-$ is defined in \eqref{eq:44}.
\end{enumerate}
In conclusion, we repeat this construction for every open interval $I_k$ and we consider $E:=\bigcup_{k=0}^{\infty} E_k\cup S$. Notice that  $E^-=\ul(t)$, $E^+=\ur(t)$ and $E$ is a compact subset of $\R$. Indeed, $E$ is bounded and if $(x_n)$ is a sequence in $E$ that accumulates in some point $\bar{x}$, by construction $x_n$ is definitively contained in one of the sets $E_k$ or in $S$, which are compact. 
\par
We can thus consider the curve 
\[ 
\vartheta:E\rightarrow \R \quad\text{such that $\vartheta(z)=z$ for every $z\in E$}:
\]
it is an admissible transition connecting $\ul(t)$ and $\ur(t)$, with $\vartheta(E)\ni u(t)$ thanks to $a')$. It remains just to prove that $\vartheta$ satisfies \eqref{eq:120} and\eqref{eq:121}.
\par
Concerning \eqref{eq:120}, for every $I\subset \frH(E)$, by construction $\vartheta(I^+)\in\rmM(t,\vartheta(I^-)$, so that
\begin{equation} \label{eq:45}
\varpsi(\vartheta, E\cap [I^-,I^+])=\Psi(\vartheta(I^+)-\vartheta(I^-))\le \cE(t,\vartheta(I^-))-\cE(t,\vartheta(I^+)). 
\end{equation}
When $s\in \rmL\big(\vartheta(E)\cap (-\infty,s]\big)$ we get $\vartheta(s)\in S$, so that $\Wsld\delta(\vartheta(s))\le \ell(t)+\alpha_-$. In particular, since $\vartheta$ is increasing
\[
\frac{W(\vartheta(r))-W(\vartheta(s))+\delta(\vartheta(s),\vartheta(r))}{\vartheta(r)-\vartheta(s)}\le \ell(t)+\alpha_-\qquad\text{for every $r<s$}.
\] 
After a trivial computation, by using \eqref{eq:delta} and by passing to the limit we get
\begin{equation} \label{eq:46}
\limsup_{r\uparrow s}\frac{\cE(t,\vartheta(r))-\cE(t,\vartheta(s))}{\varpsi\big(\vartheta,E\cap [r,s]\big)}\ge 1.
\end{equation}
We can thus recover \eqref{eq:120} from \eqref{eq:45} and \eqref{eq:46} by using Lemma \ref{le:elementary}, where we set $f(s):=-\cE(t,\vartheta(s))$ and $g(s):=\varpsi(\vartheta,E\cap [E^-,s])$.
\par
Finally, \eqref{eq:121} holds by construction if $r$ is isolated in $E\cap (-\infty,r]$. Otherwise, $r_{E^-}=r$ and it is still satisfied. In conclusion, by Theorem \ref{prop:2} $\vartheta$ is an optimal transition satisfying the third of \eqref{Jve}. Considering the restriction of $\vartheta$ on $E\cap [\ul(t),u(t)]$ and $E\cap [u(t),\ur(t)]$ we also get the first two identities of \eqref{Jve}.

\underline{Claim 4}. \emph{$b)$ is equivalent to the doubly nonlinear equation \eqref{eq:DN0}.} \newline
We notice that \eqref{eq:DN0} yields
\begin{equation} \label{eq:47}
W'(u(t))= \ell(t)-\alpha_+\qquad \text{for $\left(u'_{\rm{co}}\right)^+$-a.e. $t\in (a,b)$},
\end{equation}
so that \eqref{eq:equation1} holds by continuity and by \eqref{eq:1d-stabilityright} in $\rm{supp} \left(u'\right)^+\setminus \Ju$. On the other hand, for every $t\in \Ju\cap \,\mathrm{supp} \left(u'\right)^+$ we have $\ul(t)<\ur(t)$. From \eqref{eq:1d-stabilityright} and \eqref{eq:jump2}, $\ell(t)-\alpha_+=\Wir(u_r(t))$
and  then combining Proposition \ref{prop: Wir.prop} and \eqref{eq:jump2} again we get  
\[
W'(\ur(t))=\Wir(\ur(t))=\ell(t)-\alpha_+,
\]
which proves \eqref{eq:equation1}. The identities in \eqref{eq:equation2} follow by the same argument. 
\par
The converse implication is trivial since $\mu$ is diffuse and therefore $\ul(t)=\ur(t)=u(t)$ for $\mu$-a.e. $t\in (a,b)$. Then \eqref{eq:DN0} follows combining \eqref{eq:equation1}, \eqref{eq:equation2} and \eqref{eq:1d-stability}.
\end{proof}

The previous general result has a simple consequence: a Visco-Energetic solution is locally constant in a neighborhood of a point where the stability condition \eqref{eq:1d-stability} holds with a strict inequality.
\begin{corollary} \label{cor:main-theorem-corollary} Let $u\in \rm{BV}([a,b];\R)$ be a Visco-Energetic solution of the rate-independent system $(\R,\cE,\Psi,\delta)$. Then $u$ is locally constant in the open set
\[
\cI:=\left\{t\in [a,b]: -\alpha_-<\ell(t)-\Wsld\delta(u(t))\le \ell(t)-\Wird\delta(u(t))<\alpha_+\right\}.
\]
\end{corollary}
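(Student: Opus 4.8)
The plan is to show that on the open set $\cI$ the solution $u$ can have neither a jump nor any diffuse variation, so that the total variation measure $|u'|$ assigns no mass to $\cI$; since $u$ is then jump-free and of zero variation on each subinterval of $\cI$, it is constant on each connected component, i.e. locally constant in $\cI$. The only inputs I need are the three structural statements a), b), c) of Theorem \ref{thm:main-theorem}, read in the forward direction (every VE solution satisfies them). I would first record that $\cI$ is open: it is cut out by two strict inequalities among the continuous functions $t\mapsto\ell(t)$, $\Wird\delta$, $\Wsld\delta$ and $u$ restricted to $[a,b]\setminus\Ju$, and by part a) the non-strict version \eqref{eq:1d-stability} already holds off $\Ju$, so points where strict inequality holds form a relatively open set.

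The first real step is to check $\cI\cap\Ju=\emptyset$. If $t\in\Ju$ then $\ul(t)\neq\ur(t)$, for otherwise \eqref{eq:jump1} squeezes $u(t)$ between the two equal one-sided limits and forces $\ul(t)=u(t)=\ur(t)$, so $t\notin\Ju$. In the increasing case $\ul(t)<\ur(t)$, apply \eqref{eq:jump2} at $v=u(t)\in[\ul(t),\ur(t)]$ to get $\Wird\delta(u(t))\le\ell(t)-\alpha_+$, i.e. $\ell(t)-\Wird\delta(u(t))\ge\alpha_+$, which violates membership in $\cI$; the decreasing case is symmetric, using $\Wsld\delta$ and the bound $\ell(t)-\Wsld\delta(u(t))\le-\alpha_-$. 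Hence $u$ is continuous on $\cI$ and $\ul(t)=u(t)=\ur(t)$ for every $t\in\cI$.

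Next I would rule out diffuse variation by showing $\cI$ is disjoint from $\supp(u')^+$ and from $\supp(u')^-$. Suppose $t\in\cI\cap\supp(u')^+$ with $t\in[a,b)$: then \eqref{eq:equation1}, combined with $\ur(t)=u(t)$ from the continuity just established, gives $W'(u(t))=\Wird\delta(u(t))=\ell(t)-\alpha_+$, i.e. $\ell(t)-\Wird\delta(u(t))=\alpha_+$, again contradicting $t\in\cI$. The endpoint $t=b$ is handled by noting that $b\in\supp(u')^+$ means every left neighbourhood $(b-\eps,b)$ carries positive $(u')^+$-mass and lies in $\cI\cap[a,b)$ because $\cI$ is open, so the contradiction is obtained at an interior point of $\supp(u')^+$. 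The identical argument with \eqref{eq:equation2} and $\Wsld\delta$ handles $\supp(u')^-$.

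Finally, since $|u'|=(u')^++(u')^-$, the two disjointness facts show $|u'|$ gives no mass to $\cI$; being also jump-free there, $u$ has zero pointwise total variation on every subinterval of $\cI$ and is therefore constant on each connected component, which is the claim. I expect no serious obstacle: the argument is essentially a direct reading of Theorem \ref{thm:main-theorem}, and the only mildly delicate points are the systematic replacement of $\ur(t)$ by $u(t)$ once continuity on $\cI$ is available, and the bookkeeping at the endpoints $a$ and $b$ (where one works with one-sided neighbourhoods and uses that \eqref{eq:equation1}--\eqref{eq:equation2} are stated on $[a,b)$).
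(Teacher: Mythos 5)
Your proposal is correct and follows essentially the same route as the paper: exclude jumps from $\cI$ via \eqref{eq:jump1}--\eqref{eq:jump2}, exclude diffuse variation via \eqref{eq:equation1}--\eqref{eq:equation2}, and conclude $u'\llcorner\cI=0$, hence local constancy. The one small point worth tightening is the order of the first two observations: as written you claim openness of $\cI$ (in $[a,b]$) before you have established $\cI\cap\Ju=\emptyset$, but the argument you give for openness only yields relative openness in $[a,b]\setminus\Ju$, and you need $\cI\cap\Ju=\emptyset$ \emph{plus} the continuity of $u$ at points of $\cI$ (which follows precisely from $\cI\cap\Ju=\emptyset$) to upgrade the strict inequalities to a full neighbourhood in $[a,b]$ --- the paper does these two steps in the reverse order for exactly this reason. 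Since you do establish $\cI\cap\Ju=\emptyset$ immediately afterwards, the ingredients are all there; it is just a matter of presenting them so that the openness used at the endpoint $t=b$ is genuinely available when invoked.
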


\begin{proof}
By \eqref{eq:jump2} any $t\in \cI$ is a continuity point for $u$; the continuity properties of $\Wird\delta(\cdot)$ and $\Wsld\delta(\cdot)$ then show that a neighborhood of $t$ is also contained in $\cI$, so that $\cI$ is open and disjoint from $\Ju$. Relations \eqref{eq:equation1} and \eqref{eq:equation2} then yield that 
\[
u'=0 \qquad \text{in the sense of distributions in $\cI$}, 
\] 
so that $u$ is locally constant.
\end{proof}

\paragraph{Example.} We conclude this section with the classic example of the double-well potential energy $W(u)=\frac{1}{4}(u^2-1)^2$.
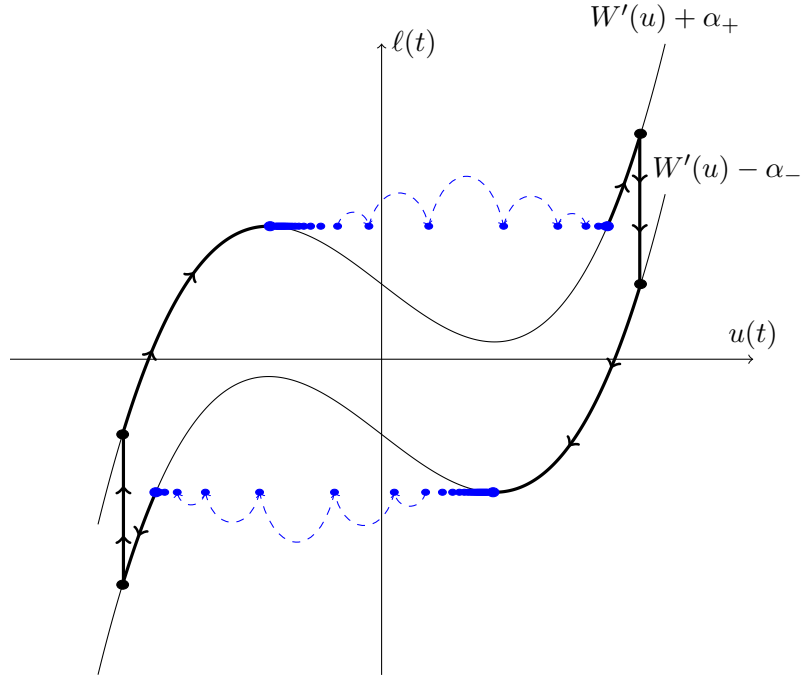
\begin{figure}[!ht]
\centering
\begin{tikzpicture}[xscale=1.3]

\draw[->] (-3.8,0) -- (3.8,0) node[above] {$u(t)$};
\draw[->] (0,-4.2) -- (0,4.2) node[right] {$\ell(t)$};

 \begin{scope}[scale=2]
    \draw[domain=-1.45:1.45,samples=200] plot ({\x}, {(\x)^3-\x+0.5}) node[above] {$W'(u)+\alpha_+$};
     \draw[domain=-1.324:-0.57,very thick,samples=200] plot ({\x}, {(\x)^3-\x+0.5}) [arrow inside={}{0.33,0.66}];
     \draw[domain=1.1547:1.32,very thick,samples=200] plot ({\x}, {(\x)^3-\x+0.5}) [arrow inside={}{0.50}]    ;
      \draw[domain=-1.45:1.45,samples=200] plot ({\x}, {(\x)^3-\x-0.5}) node[above] {\hspace{4em} $W'(u)-\alpha_-$};
     \draw[domain=-1.324:-0.57,very thick,samples=200] plot ({-\x}, {(-\x)^3+\x-0.5}) [arrow inside={}{0.33,0.66}];
     \draw[domain=1.1547:1.32,very thick,samples=200] plot ({-\x}, {(-\x)^3+\x-0.5}) [arrow inside={}{0.50}]    ;
     \foreach \Point in {(-0.57,0.8849),(1.1547,0.8849)}{
    \draw[fill=blue,blue] \Point circle(0.03);
}
     \foreach \Point in {(-0.542,0.8849),(-0.536,0.8849), (-0.530,0.8849), (-0.524,0.8849), (-0.518,0.8849), (-0.512,0.8849), (-0.506,0.8849), (-0.500,0.8849), (-0.494,0.8849), (-0.487,0.8849), (-0.479,0.8849), (-0.469,0.8849),(-0.457,0.8849),(-0.442,0.8849),(-0.423,0.8849),(-0.398,0.8849),(-0.363,0.8849),(-0.311,0.8849),(-0.225,0.8849),(-0.065,0.8849),(0.241,0.8849),(0.624,0.8849),(0.901,0.8849),(1.045,0.8849),(1.109,0.8849),(1.136,0.8849),(1.147,0.8849),(1.151,0.8849),(1.153,0.8849)}{
    \draw[fill=blue, blue] \Point circle(0.02);
}

     \foreach \Point in {(0.57,-0.8849),(-1.1547,-0.8849)}{
    \draw[fill=blue,blue] \Point circle(0.03);
}

     \foreach \Point in {(1.324,1.5),(1.324,0.5),(-1.324,-1.5),(-1.324,-0.5)}{
    \draw[fill=black] \Point circle(0.03);
}

  \foreach \Point in {(0.542,-0.8849),(0.536,-0.8849), (0.530,-0.8849), (0.524,-0.8849), (0.518,-0.8849), (0.512,-0.8849), (0.506,-0.8849), (0.500,-0.8849), (0.494,-0.8849), (0.487,-0.8849), (0.479,-0.8849), (0.469,-0.8849),(0.457,-0.8849),(0.442,-0.8849),(0.423,-0.8849),(0.398,-0.8849),(0.363,-0.8849),(0.311,-0.8849),(0.225,-0.8849),(0.065,-0.8849),(-0.241,-0.8849),(-0.624,-0.8849),(-0.901,-0.8849),(-1.045,-0.8849),(-1.109,-0.8849),(-1.136,-0.8849),(-1.147,-0.8849),(-1.151,-0.8849),(-1.153,-0.8849)}{
    \draw[fill=blue,blue] \Point circle(0.02);
}

\draw[very thick,arrow inside={}{0.33,0.66}] (1.32,1.5) -- (1.32,0.5);
\draw[very thick,arrow inside={}{0.33,0.66}] (-1.32,-1.5) -- (-1.32,-0.5);

\draw[->, dashed,blue,looseness=2] (-0.225,0.8849) to [bend left=80] (-0.065,0.8849);
\draw[->, dashed,blue,looseness=2.5] (-0.065,0.8849) to [bend left=80] (0.241,0.8849);
\draw[->, dashed,blue,looseness=3] (0.241,0.8849) to [bend left=80] (0.624,0.8849);
\draw[->, dashed,blue,looseness=2.5] (0.624,0.8849) to [bend left=80] (0.901,0.8849);
\draw[->, dashed,blue,looseness=2] (0.901,0.8849) to [bend left=80] (1.045,0.8849);

\draw[->, dashed,blue,looseness=2] (0.225,-0.8849) to [bend left=80] (0.065,-0.8849);
\draw[->, dashed,blue,looseness=2.5] (0.065,-0.8849) to [bend left=80] (-0.241,-0.8849);
\draw[->, dashed,blue,looseness=3] (-0.241,-0.8849) to [bend left=80] (-0.624,-0.8849);
\draw[->, dashed,blue,looseness=2.5] (-0.624,-0.8849) to [bend left=80] (-0.901,-0.8849);
\draw[->, dashed,blue,looseness=2] (-0.901,-0.8849) to [bend left=80] (-1.045,-0.8849);

  \end{scope}
  
\end{tikzpicture}
\caption{Visco-Energetic solution of a double-well potential energy with an oscillating external loading and a quadratic viscous-correction $\delta(u,v)$, turned by a parameter $\mu>-\min W''$.}
\label{fig:6}
\end{figure}

This energy clearly satisfies \eqref{eq:W'-assumption}. Notice also that $W'(u)=u^3-u$ and $\min W''=-1$. Therefore, if we choose $\delta(u,v):=(v-u)^2$, according to Proposition \ref{prop:1}, $\Wird\delta=\Wsld\delta=W'$ and we expect a similar behaviour to BV solutions, with the optimal transition similar in the form to a ``double chain'' at every jump point.

If the loading is oscillating, for example $\ell(t)=\sin(t)$, $\alpha_\pm=\frac{1}{2}$ and we choose the initial datum such that $W'(u(a))=\ell(t)-\alpha_+$, the result is a loop typical of the hysteresis fenomena: the solution $u$ is locally constant when $\ell$ change direction.

\section{Visco-Energetic solutions with monotone loadings} \label{sec:4}
Visco-Energetic solutions of rate-independent systems in $\R$, driven by monotone loadings, involve the notion of the \textit{upper and lower monotone} (i.e. nondecreasing) \textit{envelopes} of the graph of $\Wird\delta$ and $\Wsld\delta$.
\par 
In this section we first focus on a few properties of this maps and their inverse and then we exhibit the explicit formulae characterizing Visco-Energetic solutions when $\ell$ is increasing or decreasing.

\subsection{Monotone envelopes of one-sided global slopes} \label{subsec:monotone-envelopes} 
\begin{definition}[Upper monotone envelope of $\Wird\delta$] For every $\bar{u}$ in $\R$, we define the maximal monotone map $\maxW(\cdot):\R\rightarrow\R$ 
\begin{equation} \begin{split}
\maxW(u):=\max_{\bar{u}\le v\le u}\Wird\delta(v)&\quad\text{if $u>\bar{u}$},\qquad \maxW(\bar{u}):=(-\infty,\Wird\delta(\bar{u})], \\
&\maxW(u)=\emptyset\quad \text{if $u<\bar{u}$}.
\end{split} \end{equation}
\end{definition}
We call $\maxW(\cdot)$ the \textit{upper monotone envelope of $\Wird\delta$} in the interval $(\bar{u},+\infty)$. The \emph{contact set} is defined by
\[
C^{\bar{u}}:=\{\bar{u}\}\cup\{u>\bar{u}:\Wird\delta(u)=\maxW(u)\}.
\] 
Thanks to \eqref{eq:W'-assumption}, it is easy to check that
\begin{equation}
\lim_{v\rightarrow -\infty}\Wird\delta(v)=-\infty,\qquad \lim_{v\rightarrow +\infty}\Wird\delta(v)=+\infty,
\end{equation}
so that the map $\maxW(\cdot)$ is monotone and surjective; it is also single-valued on $(\bar{u},+\infty)$ (where we identify the set $\maxW(u)$ with its unique element with a slight abuse of notation). We can thus consider the inverse graph $\invmaxW(\cdot):\R\rightarrow[\bar{u},+\infty)$ of $\maxW(\cdot)$: it is defined by
\[
u\in\invmaxW(\ell)\quad\Leftrightarrow\quad\ell\in\maxW(u)\qquad\text{for $u$, $\ell\in\R$}.
\]
Clearly, $\invmaxW(\cdot)$ is a maximal monotone graph in $\R$ and it is uniquely characterized by a left-continuous monotone function $p_{\sfl,\delta}^{\bar{u}}(\cdot)$ and a right-continuous monotone function $p_{\sfr,\delta}^{\bar{u}}(\cdot)$ such that
\[
\invmaxW(\ell)=[p_{\sfl,\delta}^{\bar{u}}(\ell),p_{\sfr,\delta}^{\bar{u}}(\ell)],\quad \text{i.e.}\quad \ell\in\maxW(u)\quad\Leftrightarrow\quad p_{\sfl,\delta}^{\bar{u}}(\ell)\le u \le p_{\sfr,\delta}^{\bar{u}}(\ell).
\]
We also consider a further selection in the graph of $\invmaxW(\cdot)$:
\[
\invmaxWc(\ell):=\{u\in \invmaxW(\ell):\Wird\delta(u)=\ell\}=\{u\in C^{\bar{u}}:\maxW(u)\ni\ell\}=\invmaxW(\ell)\cap C^{\bar{u}}.
\]
By introducing the set
\[
A_\delta^{\bar{u}}:=\{f:(\bar{u},+\infty)\rightarrow\R:\text{ $f$ is nondecreasing and fulfills $f\ge \Wird\delta$}\},
\]
we have
\begin{equation} \label{eq:13}
\maxW(\cdot)\restr{(\bar{u},+\infty)}\in A_\delta^{\bar{u}},\quad \Wird\delta(u)\le \maxW(u)\le f(u) \text{ for all $f\in A_\delta^{\bar{u}}, u\in(\bar{u},+\infty)$},
\end{equation}
so that $\maxW$ is the minimal nondecreasing map above the graph of $\Wird\delta$ in $(\bar{u},+\infty)$.
It immediately follows from \eqref{eq:13} that
\[
\maxW(u)=\inf\{f(u): f\in A_\delta^{\bar{u}}\}\qquad \text{for all $u>\bar{u}$}.
\]
The following result collects some simple properties of $p_{\sfl,\delta}^{\bar{u}}(\cdot)$ and $p_{\sfr,\delta}^{\bar{u}}(\cdot)$.
\begin{proposition} Assume \eqref{eq:W'-assumption}. Then for every $\ell\ge \Wird\delta(\bar{u})$ there holds
\begin{equation} \label{eq:14}
\Wird\delta(u)\le \ell\qquad \text{if $u\in [\bar{u},p_{\sfr,\delta}^{\bar{u}}(\ell)]$}.
\end{equation}
Moreover, for every $\ell\in\R$ we have
\begin{equation} \label{eq:15}
p_{\sfl,\delta}^{\bar{u}}(\ell)=\min \{u\ge \bar{u}:\Wird\delta(u)\ge\ell\},\qquad p_{\sfr,\delta}^{\bar{u}}(\ell)=\inf\{u\ge \bar{u}: \Wird\delta(u)>\ell\}.
\end{equation}
\end{proposition}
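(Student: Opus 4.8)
The plan is to derive both the inequality \eqref{eq:14} and the two identities \eqref{eq:15} directly from the structure of $\maxW$ and its inverse graph recorded above: on $(\bar u,+\infty)$ the map $\maxW$ is single-valued, nondecreasing, and equal to $u\mapsto\max_{\bar u\le v\le u}\Wird\delta(v)$, while $\invmaxW(\ell)=[p_{\sfl,\delta}^{\bar u}(\ell),p_{\sfr,\delta}^{\bar u}(\ell)]$ is a nonempty compact interval — nonempty by surjectivity of $\maxW$, and bounded above because $\maxW(u)\ge\Wird\delta(u)\to+\infty$ as $u\to+\infty$. In particular its right endpoint satisfies $\ell\in\maxW(p_{\sfr,\delta}^{\bar u}(\ell))$. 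For \eqref{eq:14} I would split on $u\in[\bar u,p_{\sfr,\delta}^{\bar u}(\ell)]$: the case $u=\bar u$ is exactly the assumption $\Wird\delta(\bar u)\le\ell$; if $u>\bar u$ then also $p_{\sfr,\delta}^{\bar u}(\ell)>\bar u$, so $\maxW(p_{\sfr,\delta}^{\bar u}(\ell))$ is single-valued, forcing $\max_{\bar u\le v\le p_{\sfr,\delta}^{\bar u}(\ell)}\Wird\delta(v)=\ell$, whence $\Wird\delta(u)\le\ell$.

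For the first identity in \eqref{eq:15} I would set $S:=\{u\ge\bar u:\Wird\delta(u)\ge\ell\}$. Using the continuity of $\Wird\delta$ established earlier, $S$ is closed, and it is nonempty since $\Wird\delta\to+\infty$ at $+\infty$; put $u^*:=\min S$. To show $u^*\in\invmaxW(\ell)$ I split on whether $u^*=\bar u$ or $u^*>\bar u$: in the first case $\Wird\delta(\bar u)\ge\ell$ gives $\ell\in(-\infty,\Wird\delta(\bar u)]=\maxW(\bar u)$; in the second, $\Wird\delta<\ell$ on $[\bar u,u^*)$ together with $\Wird\delta(u^*)\ge\ell$ forces $\Wird\delta(u^*)=\ell$ by continuity, hence $\max_{[\bar u,u^*]}\Wird\delta=\ell$, that is $\maxW(u^*)=\ell$; this yields $p_{\sfl,\delta}^{\bar u}(\ell)\le u^*$. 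For the reverse inequality I take an arbitrary $u\in\invmaxW(\ell)$: if $u=\bar u$ then $\ell\le\Wird\delta(\bar u)$, so $\bar u\in S$; if $u>\bar u$ then $\max_{[\bar u,u]}\Wird\delta=\ell$ and this maximum is attained (continuity plus compactness) at some $v\in[\bar u,u]$ with $\Wird\delta(v)=\ell$, so $v\in S$ and $u\ge v\ge u^*$. Hence $p_{\sfl,\delta}^{\bar u}(\ell)\ge u^*$.

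The second identity in \eqref{eq:15} is handled in the same way with $T:=\{u\ge\bar u:\Wird\delta(u)>\ell\}$, which is relatively open in $[\bar u,+\infty)$ and nonempty; set $u^{**}:=\inf T$. One has $\Wird\delta\le\ell$ on $[\bar u,u^{**})$ by definition of the infimum, and if $u^{**}>\bar u$ then $u^{**}\notin T$ by relative openness, so $\Wird\delta(u^{**})\le\ell$, while picking $u_n\to u^{**}$ with $\Wird\delta(u_n)>\ell$ gives $\Wird\delta(u^{**})\ge\ell$; thus $\Wird\delta(u^{**})=\ell$ and $\maxW(u^{**})=\max_{[\bar u,u^{**}]}\Wird\delta=\ell$, i.e. $u^{**}\in\invmaxW(\ell)$. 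If instead $u^{**}=\bar u$, the same sequence gives $\Wird\delta(\bar u)\ge\ell$, so $\ell\in\maxW(\bar u)$ and again $\bar u=u^{**}\in\invmaxW(\ell)$. Either way $p_{\sfr,\delta}^{\bar u}(\ell)\ge u^{**}$. Conversely, if $u\in\invmaxW(\ell)$ and $u>\bar u$ then $\max_{[\bar u,u]}\Wird\delta=\ell$, so $[\bar u,u]\cap T=\emptyset$ and $u\le\inf T=u^{**}$; the case $u=\bar u$ is trivial. Hence $p_{\sfr,\delta}^{\bar u}(\ell)\le u^{**}$, and equality holds.

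The only delicate point is the endpoint $\bar u$, where $\maxW$ is genuinely multivalued, so in each identity the value $u^*=\bar u$ (resp. $u^{**}=\bar u$) must be treated separately from the interior case, and one has to be careful that $\ell\ge\Wird\delta(\bar u)$ is really needed in \eqref{eq:14} (otherwise $p_{\sfr,\delta}^{\bar u}(\ell)=\bar u$ and the inequality fails). Everything else reduces to continuity of $\Wird\delta$ and the coercivity \eqref{eq:W'-assumption}, so I do not expect a substantive obstacle beyond this bookkeeping.
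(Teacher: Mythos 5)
Your proof is correct and takes essentially the same route as the paper's: both identities in \eqref{eq:15} are obtained by sandwiching, reading off from the definition of $\maxW$ as a running maximum of $\Wird\delta$ and from the endpoint description of the inverse graph $\invmaxW(\ell)=[p_{\sfl,\delta}^{\bar u}(\ell),p_{\sfr,\delta}^{\bar u}(\ell)]$, and \eqref{eq:14} follows from monotonicity of $\maxW$ together with $\Wird\delta\le\maxW$. You are more explicit than the paper in isolating the endpoint $u=\bar u$, where $\maxW$ is genuinely multivalued, and in verifying that the candidate minimum $u^*$ (resp.\ infimum $u^{**}$) actually lies in $\invmaxW(\ell)$; the paper handles these points rather tersely, so your additional bookkeeping is a welcome clarification rather than a deviation.
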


\begin{proof}
Property \eqref{eq:14} is an immediate consequence of the inequality $\Wird\delta\le \maxW(\cdot)$ in $[\bar{u},+\infty]$. 
\newline To prove the first of \eqref{eq:15} it is sufficient to notice that 
\[
\Wird\delta(u)\le \maxW(u)<\ell \quad \text{if }\bar{u}\le u< p_{\sfl,\delta}^{\bar{u}}(\ell),
\]
and $\maxW(u)=\ell$ if $u=p_{\sfl,\delta}^{\bar{u}}(\ell)$. For the second of \eqref{eq:15}, we observe that, when $u>p_{\sfr,\delta}^{\bar{u}}(\ell)$ we have $\maxW(u)>\ell$, and we know that there exists $v\in [p_{\sfr,\delta}^{\bar{u}}(\ell),u]$ such that $\Wird\delta(v)>\ell$. Since $u$ is arbitrary we get 
\[
p_{\sfl,\delta}^{\bar{u}}(\ell)\ge \inf\{u\ge \bar{u}: \Wird\delta(u)>\ell\}.
\]
The converse inequality follows from \eqref{eq:14}.
\end{proof}

\par
In a completely similar way we can introduce the \textit{maximal monotone map} below the graph of $\Wsld\delta$ on the interval $(-\infty,\bar{u}]$.
\begin{definition}[Lower monotone envelope of $\Wsld\delta$] For every $\bar{u}$ in $\R$, we define the maximal monotone map $\minW(\cdot):\R\rightarrow\R$ 
\begin{equation} \begin{split}
\minW(u):=\inf_{\bar{u}\le v\le u}\Wsld\delta(v)&\quad\text{if $u<\bar{u}$},\qquad \minW(\bar{u}):=[\Wsld\delta(\bar{u}),+\infty), \\
&\minW(u)=\emptyset\quad \text{if $u>\bar{u}$},
\end{split} \end{equation}
\end{definition}
which satisfies
\[
\minW(u)=\sup \{f(u):f\in B_\delta^{\bar{u}}\}\quad\text{for $u<\bar{u}$},
\]
where
\[
B_\delta^{\bar{u}}:=\{f:(-\infty,\bar{u})\rightarrow\R: \text{ $f$ is nondrecreasing and fulfills $f\le \Wsld\delta$} \}.
\]
As before, the inverse graph $\invminW(\cdot):=(\minW(\cdot))^{-1}:\R\rightarrow (-\infty,\bar{u}]$ can be represented as $\invminW(u)=[q_{\sfl,\delta}^{\bar{u}}(u),q_{\sfr,\delta}^{\bar{u}}(u)]$, where
\[
q_{\sfl,\delta}^{\bar{u}}(u)=\sup \{u\le \bar{u}: \Wsld\delta(u)<\ell\},\qquad q_{\sfr,\delta}^{\bar{u}}(u)=\max \{u\le \bar{u}: \Wsld\delta(u)\le\ell\}
\]
and we set
\[
\invminWc(\ell):=\{u\in \invminW(\ell):\Wsld\delta(u)=\ell\}.
\]

\subsection{Monotone loadings and Visco-Energetic solutions}
We apply the notions introduced the previous section to characterize Visco-Energetic solutions when $\ell$ is monotone. First of all, we provide an explicit formula yielding Visco-Energetic solutions for an increasing loading $\ell$. The case of a decreasing and of a piecewise monotone loading can be proved in a similar way.

\begin{theorem} \label{thm:monotonicity-theorem1} Let $\bar{u}\in \R$,  $\ell\in \rmC^1([a,b])$ be a nondecreasing loading such that
\begin{equation} \label{eq:monotone-loading-assumption1}
\ell(a)\ge \Wsld\delta(\bar{u})-\alpha_-.
\end{equation}
Any nondecreasing map $u:[a,b]\rightarrow\R$, with $u(a)=\bar{u}$,  such that for every $t\in(a,b]$ 
\begin{equation}\label{eq:monotone-loading-assumption2} 
\Wsld\delta(u(t))-\alpha_-\le W'(u(t)),\qquad u(t)\in \invmaxWc(\ell(t)-\alpha_+)
\end{equation}
is a Visco-Energetic solution of the rate-independent system $(\R,\cE,\Psi,\delta)$. In particular, \eqref{eq:monotone-loading-assumption2} yields
\begin{equation}\label{eq:monotone-loading-characterization1}
u(t)\in [p_{\sfl,\delta}^{\bar{u}}(\ell(t)-\alpha_+), p_{\sfr,\delta}^{\bar{u}}(\ell(t)-\alpha_+)]\quad\text{for every $t\in (a,b]$}.
\end{equation}
\end{theorem}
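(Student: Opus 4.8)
The plan is to invoke the converse direction of Theorem~\ref{thm:main-theorem}. Since $u$ is monotone it belongs to $\BV([a,b];\R)$ (with $\varpsi(u,[a,b])=\alpha_+(u(b)-\bar u)<\infty$), so it is enough to check the $1$d-stability \eqref{eq:1d-stability} for every $t\in(a,b)$, the precise differential inclusion \eqref{eq:equation1}--\eqref{eq:equation2}, and the jump conditions \eqref{eq:jump1}--\eqref{eq:jump2}. Monotonicity settles several of these for free: $\ul(t)\le u(t)\le\ur(t)$ is exactly \eqref{eq:jump1}; $\supp((u')^-)=\emptyset$ and $\ul(t)\le\ur(t)$ make \eqref{eq:equation2} and the second alternative of \eqref{eq:jump2} vacuous; and the concluding ``in particular'' assertion \eqref{eq:monotone-loading-characterization1} is just $\invmaxWc(\ell)=\invmaxW(\ell)\cap C^{\bar u}\subseteq\invmaxW(\ell)=[p_{\sfl,\delta}^{\bar u}(\ell),p_{\sfr,\delta}^{\bar u}(\ell)]$ applied with $\ell=\ell(t)-\alpha_+$.

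For a') observe that, by \eqref{eq:4}, the chain \eqref{eq:1d-stability} reduces at each $t$ to its two extreme inequalities $\ell(t)-\Wird\delta(u(t))\le\alpha_+$ and $-\alpha_-\le\ell(t)-\Wsld\delta(u(t))$, and the first is an equality by \eqref{eq:monotone-loading-assumption2}. On the sub-level set $\{t\in[a,b]:u(t)=\bar u\}$, an interval containing $a$, the remaining inequality $\Wsld\delta(\bar u)\le\ell(t)+\alpha_-$ follows from $\ell$ being nondecreasing together with \eqref{eq:monotone-loading-assumption1}. On $\{u(t)>\bar u\}$ I claim that the inclusion $u(t)\in\invmaxWc(\ell(t)-\alpha_+)\subseteq C^{\bar u}$ already forces $W'(u(t))=\Wird\delta(u(t))=\ell(t)-\alpha_+$: were $\Wird\delta(u(t))<W'(u(t))$, continuity would give a neighbourhood $I\ni u(t)$, $I\subseteq(\bar u,+\infty)$, on which $\Wird\delta<W'$, hence $\Wird\delta$ strictly decreasing on $I$ by Proposition~\ref{prop: Wir.prop}, so $\Wird\delta(v)>\Wird\delta(u(t))$ for $v\in I$ with $v<u(t)$; this contradicts the contact-set identity $\Wird\delta(v)\le\max_{\bar u\le w\le u(t)}\Wird\delta(w)=\Wird\delta(u(t))$. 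Inserting $W'(u(t))=\ell(t)-\alpha_+$ into the hypothesis $\Wsld\delta(u(t))-\alpha_-\le W'(u(t))$ gives $\Wsld\delta(u(t))\le\ell(t)-\alpha_++\alpha_-\le\ell(t)+\alpha_-$, which is precisely the last missing bound.

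For \eqref{eq:equation1} fix $t\in\supp((u')^+)\cap[a,b)$. Passing to the limit $s\downarrow t$ in $\Wird\delta(u(s))=\ell(s)-\alpha_+$ (valid for $s\in(a,b]$ by \eqref{eq:monotone-loading-assumption2}) and using the continuity of $\Wird\delta$ and $\ell$ yields $\Wird\delta(\ur(t))=\ell(t)-\alpha_+$. If $\ur(t)>\bar u$ then $\ur(t)\in C^{\bar u}$ because $C^{\bar u}$ is closed and contains all $u(s)$ with $s>t$, and the argument of the previous paragraph gives $W'(\ur(t))=\Wird\delta(\ur(t))$; if $\ur(t)=\bar u$ then $u\equiv\bar u$ on $[a,t]$, and $t\in\supp((u')^+)$ provides $s_n\downarrow t$ with $u(s_n)>\bar u$ and $\Wird\delta(u(s_n))=\ell(s_n)-\alpha_+\ge\ell(t)-\alpha_+=\Wird\delta(\bar u)$, which rules out a strict decrease of $\Wird\delta$ immediately to the right of $\bar u$ and therefore, again by Proposition~\ref{prop: Wir.prop}, forces $W'(\bar u)=\Wird\delta(\bar u)$ as well. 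In either case \eqref{eq:equation1} holds. For \eqref{eq:jump2} take $t\in\Ju$ with $\ul(t)<\ur(t)$ (so $t>a$): by \eqref{eq:monotone-loading-characterization1}, $u(s)\le p_{\sfr,\delta}^{\bar u}(\ell(s)-\alpha_+)$ for $s>t$, and letting $s\downarrow t$ (or taking $s=t=b$) and using the right-continuity and monotonicity of $p_{\sfr,\delta}^{\bar u}$ and $\ell$ gives $\ur(t)\le p_{\sfr,\delta}^{\bar u}(\ell(t)-\alpha_+)$; since $\ul(t)\ge u(a)=\bar u$, the whole interval $[\ul(t),\ur(t)]$ lies in $[\bar u,p_{\sfr,\delta}^{\bar u}(\ell(t)-\alpha_+)]$, on which $\Wird\delta\le\ell(t)-\alpha_+$ by \eqref{eq:14}.

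Having verified a'), \eqref{eq:equation1}, \eqref{eq:equation2}, \eqref{eq:jump1} and \eqref{eq:jump2}, the converse implication in Theorem~\ref{thm:main-theorem} gives that $u$ is a Visco-Energetic solution of $(\R,\cE,\Psi,\delta)$. The only genuinely nontrivial ingredient is the identity $W'=\Wird\delta$ at the points $u(t)$ and $\ur(t)$: it comes from coupling the contact-set description of $\invmaxWc$ with Proposition~\ref{prop: Wir.prop}, which says $\Wird\delta$ cannot be strictly decreasing where it meets its running maximum; the degenerate case $u(t)=\bar u$ has to be treated separately and is exactly where hypothesis \eqref{eq:monotone-loading-assumption1} enters.
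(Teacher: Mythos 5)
Your proof is correct and follows essentially the same approach as the paper: both apply the converse direction of Theorem~\ref{thm:main-theorem}, using the contact-set description of $\invmaxWc$ together with Proposition~\ref{prop: Wir.prop} to obtain $W'=\Wird\delta=\ell(t)-\alpha_+$ at the relevant values, and both invoke \eqref{eq:monotone-loading-assumption1} for the stability of $\bar u$ where $u$ is still constant. The only difference is organizational: the paper dispatches the edge case $u(t)=\bar u$ by first reducing to $\gamma:=\inf\{t>a:u(t)>\bar u\}=a$ via the restriction/concatenation Proposition~\ref{lem:1}, whereas you argue directly on the sublevel set $\{u=\bar u\}$ and on $\{u>\bar u\}$ and handle the degenerate case $\ur(t)=\bar u$ by a separate right-limit argument.
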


\begin{proof}
We apply Theorem \ref{thm:main-theorem}. Concerning the global stability condition, notice that \eqref{eq:monotone-loading-assumption2} yield
\begin{equation} \label{eq:31}
\Wird\delta(u(t))=W'(u(t))\qquad\text{for every $t\in(a,b]$}.
\end{equation}
Indeed, if $\Wird\delta(u(t))\neq W'(u(t))$, from Proposition \ref{prop: Wir.prop}, $\Wird\delta$ is decreasing in a neighborhood of $u(t)$, which contradicts the second of \eqref{eq:monotone-loading-assumption2}. Therefore, the first of \eqref{eq:monotone-loading-assumption2}, combined with \eqref{eq:31} gives \eqref{eq:1d-stability} for every $t\in(a,b]$.
\par
To check the equation \eqref{eq:equation1}, we set
\[
\gamma:=\inf \{t>a: u(t)>u(a)\}.
\]
If $\Wird\delta(u(a))<\ell(a)-\alpha_+$, then from \eqref{eq:monotone-loading-assumption2} $a\in \Ju$ and $\Wird\delta(\ur(a))=\ell(a)-\alpha_+$. Otherwise, $u(a)$ satisfies the stability condition and $u$ is clearly a constant Visco-Energetic solution on $[a,\gamma]$. Thus, it is not restrictive to assume that $\gamma=a$ by Proposition \ref{lem:1}.
In this case $\ur(t)>u(a)$ for every $t>a$ and by continuity \eqref{eq:monotone-loading-assumption2} yields
\begin{equation} \label{eq:32}
\Wird\delta(\ul(t))=\Wird\delta(\ur(t))=\ell(t)-\alpha_+\qquad \text{for every $t\in (a,b)$},
\end{equation}
and the second identity still holds in $t=a$. Thus, from the first of \eqref{eq:31} and the continuity of $W$ we finally get \eqref{eq:equation1}.

\par
To check the jump conditions, let us first notice that combining equation \eqref{eq:32} and \eqref{eq:monotone-loading-assumption2} 
\[
\Wird\delta(v)\le \ell(t)-\alpha_+\qquad\text{ for every $t\in (a,b]$, $\bar{u}\le v\le \ur(t)$}.
\] 
Then, from \eqref{eq:15} and the monotonicity of $u$ we get
\begin{equation} \label{eq:35}
p_{\sfl,\delta}^{\bar{u}}(\ell(t)-\alpha_+)\le \ul(t)\le u(t)\le \ur(t)\le p_{\sfr,\delta}^{\bar{u}}(\ell(t)-\alpha_+)\quad \text{for every $t\in(a,b]$}
\end{equation}
which yields \eqref{eq:jump1} and \eqref{eq:jump2} by \eqref{eq:14}. Moreover, the inequalities in \eqref{eq:35} also shows that \eqref{eq:monotone-loading-assumption2} implies \eqref{eq:monotone-loading-characterization1} 
\end{proof}

\par If the loading is decreasing, a similar result still holds. It can be proved with an adaptation of the proof of Theorem \ref{thm:monotonicity-theorem1}.
\begin{theorem} \label{thm:45}
 Let $\bar{u}\in \R$ and $\ell\in \rmC^1[a,b])$ be a nonincreasing loading such that
\begin{equation}
\ell(a)\le \Wird\delta(\bar{u})+\alpha_-.
\end{equation}
Any nonincreasing map $u:[a,b]\rightarrow\R$, with $u(a)=\bar{u}$, such that
\begin{equation} \label{eq:34}
\Wird\delta(u(t))+\alpha_+\ge W'(u(t)),\qquad u(t)\in \invminWc(\ell(t)+\alpha_-)\text{ for every $t\in (a,b]$}
\end{equation}
is a Visco-Energetic solution of the rate-independent system $(\R,\cE,\Psi,\delta)$. In particular, \eqref{eq:34} yields
\begin{equation}
u(t)\in [q_{\sfl,\delta}^{\bar{u}}(\ell(t)+\alpha_-), q_{\sfr,\delta}^{\bar{u}}(\ell(t)+\alpha_-)]\quad\text{for every $t\in [a,b]$},
\end{equation}
\end{theorem}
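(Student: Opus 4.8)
The strategy is to invoke the converse (``sufficiency'') part of Theorem~\ref{thm:main-theorem}: it suffices to check that $u$ satisfies the $1$d-stability \eqref{eq:1d-stability} on $(a,b)$ (condition a')), the precise doubly nonlinear equations \eqref{eq:equation1}--\eqref{eq:equation2}, and the jump conditions \eqref{eq:jump1}--\eqref{eq:jump2}. Observe first that $u$ is monotone with $u(b)\le u(t)\le u(a)=\bar u$, hence bounded and in $\BV([a,b];\R)$, and that it takes values in $(-\infty,\bar u]$, which is exactly the half-line on which $\minW$, $\invminW$ and $\invminWc$ are defined. The whole argument is the orientation-reversed copy of the proof of Theorem~\ref{thm:monotonicity-theorem1}, with $\Wsld\delta$ and the lower envelope in place of $\Wird\delta$ and the upper one, suprema in place of infima, and left/right limits interchanged.

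For stability, the membership $u(t)\in\invminWc(\ell(t)+\alpha_-)$ built into \eqref{eq:34} says exactly that $\Wsld\delta(u(t))=\ell(t)+\alpha_-$ and that $u(t)$ is a contact point of the lower monotone envelope, i.e.\ $\Wsld\delta(u(t))=\minW(u(t))=\inf_{u(t)\le v\le\bar u}\Wsld\delta(v)$. I would first upgrade this to $\Wsld\delta(u(t))=W'(u(t))$: otherwise $\Wsld\delta>W'$ on a whole neighbourhood of $u(t)$ by continuity, so $\Wsld\delta$ is strictly decreasing there by Proposition~\ref{prop: Wir.prop}, contradicting the fact that $\Wsld\delta(u(t))$ is the infimum of $\Wsld\delta$ over $[u(t),\bar u]$. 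Then $\Wsld\delta(u(t))=W'(u(t))=\ell(t)+\alpha_-$, while the first inequality of \eqref{eq:34} reads $\Wird\delta(u(t))\ge W'(u(t))-\alpha_+=\ell(t)+\alpha_--\alpha_+$, and $\Wird\delta\le W'\le\Wsld\delta$ by \eqref{eq:4}; plugging these into \eqref{eq:1d-stability} verifies all four of its inequalities for every $t\in(a,b]$, which is more than a').

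For the equations and the jumps: $(u')^+\equiv0$ because $u$ is nonincreasing, so \eqref{eq:equation1} is vacuous. Following Theorem~\ref{thm:monotonicity-theorem1}, set $\gamma:=\inf\{t>a:u(t)<u(a)\}$; either $\bar u$ is already unstable and there is an immediate jump at $a$, or $u$ is constant on $[a,\gamma]$ (hence trivially a VE solution there) and, by the restriction/concatenation Proposition~\ref{lem:1}, one may assume $\gamma=a$, so that $u(s)<\bar u$ for all $s\in(a,b]$. Since $\Wsld\delta(u(s))=W'(u(s))=\ell(s)+\alpha_-$ for $s\in(a,b]$, continuity of $\Wsld\delta$, $W'$ and $\ell$ together with passage to the one-sided limits gives $W'(\ur(t))=\Wsld\delta(\ur(t))=\ell(t)+\alpha_-$ for $t\in[a,b)$, i.e.\ \eqref{eq:equation2}. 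Condition \eqref{eq:jump1} is immediate from $\ur(t)\le u(t)\le\ul(t)$. For \eqref{eq:jump2}, note that $u(s)\in\invminWc(\ell(s)+\alpha_-)$ forces $\inf_{u(s)\le w\le\bar u}\Wsld\delta(w)=\ell(s)+\alpha_-$, hence $\Wsld\delta(v)\ge\ell(s)+\alpha_-$ for every $v\in[u(s),\bar u]$; taking $s\downarrow t$, so that $u(s)\uparrow\ur(t)$, yields $\Wsld\delta(v)\ge\ell(t)+\alpha_-$ for all $v\in[\ur(t),\ul(t)]$, which is \eqref{eq:jump2} in the decreasing-jump case. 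Finally, $u(t)\in[q^{\bar u}_{\sfl,\delta}(\ell(t)+\alpha_-),q^{\bar u}_{\sfr,\delta}(\ell(t)+\alpha_-)]$ is immediate from $\invminWc(\ell)\subseteq\invminW(\ell)=[q^{\bar u}_{\sfl,\delta}(\ell),q^{\bar u}_{\sfr,\delta}(\ell)]$.

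There is no genuinely new difficulty compared with Theorem~\ref{thm:monotonicity-theorem1}; the only point requiring care is the orientation book-keeping. One must apply Proposition~\ref{prop: Wir.prop} on the correct side of $\bar u$ so that the contact-point property encoded in $\invminWc$ really upgrades $\Wsld\delta(u(t))$ to $W'(u(t))$, and in the jump analysis one must take the right-sided limit $s\downarrow t$ (where $u(s)\le\ur(t)$) rather than $s\uparrow t$, so that the envelope inequality $\Wsld\delta(v)\ge\ell(t)+\alpha_-$ lands on the interval $[\ur(t),\ul(t)]$ and not on the wrong side of $\ul(t)$.
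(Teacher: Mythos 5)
Your proposal is correct and is exactly the adaptation the paper has in mind: for Theorem~\ref{thm:45} the paper itself gives no separate argument but simply states ``it can be proved with an adaptation of the proof of Theorem~\ref{thm:monotonicity-theorem1},'' and your orientation-reversed argument (lower envelope and $\Wsld\delta$ in place of upper envelope and $\Wird\delta$, the contact-point upgrade $\Wsld\delta(u(t))=W'(u(t))$ via Proposition~\ref{prop: Wir.prop}, the $\gamma$-reduction via Proposition~\ref{lem:1}, and the right-sided limit $s\downarrow t$ to obtain \eqref{eq:equation2} and \eqref{eq:jump2}) is precisely that adaptation. The only point worth flagging is cosmetic: the hypothesis in the theorem reads $\ell(a)\le\Wird\delta(\bar u)+\alpha_-$, whereas the mirror image of the hypothesis of Theorem~\ref{thm:monotonicity-theorem1} (and what one actually needs to guarantee stability of the initial datum on the constant part $[a,\gamma]$) is $\ell(a)\le\Wird\delta(\bar u)+\alpha_+$; this looks like a typo in the paper rather than a gap in your argument.
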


\begin{remark} The first condition of \eqref{eq:monotone-loading-assumption2} (resp. the first of \eqref{eq:34}) holds if the energy density $W$ satisfies the  $\delta$-convexity assumption \eqref{eq:41}. In this case
\[
\Wird\delta(u)=W'(u)=\Wsld\delta(u)\qquad\text{for every $u\in\R$}.
\]
In particular, it is satisfied if $W$ is $\lambda$-convex, see \eqref{eq:lambda-convex}, and we choose a quadratic $\delta$, tuned by a parameter $\mu\ge\min\{-\lambda,0\}$.
\end{remark}

\par
The next result shows that, under a slightly stronger condition on the initial data, any Visco-Energetic solution driven by an increasing loading admits a similar representation to \eqref{eq:monotone-loading-assumption2}: the second inclusion holds for every $t\not\in \Ju$.
\begin{theorem} [Nondecreasing loading] \label{thm:monotonicity-main-theorem}
Let $\ell\in \rmC^1([a,b])$ be a nondecreasing loading and let $u\in \BV([a,b],\R)$ be a Visco-Energetic solution of the rate-independent system $(\R,\cE,\Psi,\delta)$ satisfying
\begin{gather}
\ell(a)\ge \Wsld\delta(u(a))-\alpha_-, \tag{IC1} \\
W'<W'(u(a)) \text{ in a left neighborhood of u(a)}\quad \text{if $W'(u(a))=\ell(a)+\alpha_-$}; \tag{IC2} \label{eq:monotonicity-theorem-assumption1}
\end{gather}
and, for every $z<u(a)$,
\begin{equation}
\frac{W(z)-W(u(a))+\delta(u(a),z)}{z-u(a)}<\Wsld\delta(u(a)) \quad \text{if $\Wsld\delta(u(a))=\ell(a)+\alpha_-$}.\tag{IC3} \label{eq:monotonicity-theorem-assumpion2}
\end{equation}
Then, similarly to Theorem \ref{thm:monotonicity-theorem1}, $u$ satisfies
\begin{equation} \label{eq:monotonicity-theorem-thesis1}
u\text{ is nondecreasing},\qquad u(t)\in \invmaxWca(\ell(t)-\alpha_+)\quad\text{for every $t\in [a,b]\setminus \Ju$}
\end{equation}
and therefore
\begin{equation} \label{eq:monotonicity-theorem-thesis2}
u(t)\in [p_{\sfl,\delta}^{u(a)}(\ell(t)-\alpha_+), p_{\sfr,\delta}^{u(a)}(\ell(t)-\alpha_+)]\quad\text{for every $t\in [a,b]$}.
\end{equation}
\end{theorem}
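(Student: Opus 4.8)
The plan is to establish three things, in order: (A) $u$ is nondecreasing; (B) $u(t)\in\invmaxWca(\ell(t)-\alpha_+)$ for every $t\in[a,b]\setminus\Ju$; and (C) the two–sided bound \eqref{eq:monotonicity-theorem-thesis2}. Everything is built on the characterization Theorem \ref{thm:main-theorem} and Corollary \ref{cor:main-theorem-corollary}: $u$ obeys \eqref{eq:1d-stability} off $\Ju$, the precise equations \eqref{eq:equation1}--\eqref{eq:equation2} on $\supp((u')^\pm)$, the jump relations \eqref{eq:6}--\eqref{eq:7}, \eqref{eq:jump1}--\eqref{eq:jump2} on $\Ju$, and is locally constant wherever \eqref{eq:1d-stability} is strict. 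Steps (B) and (C) are short once (A) is in hand, so the bulk of the work is (A), which I would attack by a ``first time monotonicity fails'' argument.

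For Step (A), set $\tau:=\inf\{t\in[a,b]:u\text{ is not nondecreasing on }[a,t]\}$ and suppose, for a contradiction, that $\tau<b$. Then $u$ is nondecreasing on $[a,\tau)$, the value $c:=\ul(\tau)=\sup_{[a,\tau)}u$ is reached by $u$ increasing (possibly through upward jumps), and $u$ ``turns down'' at $\tau$, i.e.\ $u(\tau)<c$ or $\ur(\tau)<c$. When $\tau>a$, the up–phase forces $W'(c)=\Wird\delta(c)=\ell(\tau)-\alpha_+$: this comes from \eqref{eq:equation1} (and \eqref{eq:6} if $c$ is reached by a jump) applied along times increasing to $\tau$, together with the stability bound $\Wird\delta(c)\ge\ell(\tau)-\alpha_+$ and the monotonicity of $\ell$. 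A continuous turn–down is then impossible: it would put $\tau\in\supp((u')^-)$ and force, via \eqref{eq:equation2} at times just past $\tau$, values $W'(v)=\ell(\cdot)+\alpha_-\ge\ell(\tau)+\alpha_-$ at points $v<c$ arbitrarily close to $c$, contradicting $W'(c)=\ell(\tau)-\alpha_+$ and $\alpha_\pm>0$ (for $\tau=a$ the same is obtained from (IC2)). So the turn–down is a downward jump $\ur(\tau)<c$. Now \eqref{eq:jump2} combined with stability of $u$ on $[a,\tau)$ gives $\Wsld\delta(c)=\ell(\tau)+\alpha_-$, hence $\Wsld\delta(c)>W'(c)$; therefore the supremum in \eqref{eq:Wsl} defining $\Wsld\delta(c)$ is attained at some $z_0<c$ (not as $z\uparrow c$, whose limiting ratio is $W'(c)$; existence uses \eqref{eq:W'-assumption}), equivalently $z_0\in\rmM(\tau,c)$ by \eqref{eq:11}. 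A short manipulation with the reverse triangle inequality \eqref{eq:delta2}, modelled on the proof of Proposition \ref{prop: Wir.prop}, then yields $\Wsld\delta(v)>\Wsld\delta(c)=\ell(\tau)+\alpha_-$ for every $v\in(z_0,c)$; but $u$, being nondecreasing on $[a,\tau)$, visits some such $v$ at a time $s_v<\tau$ where it is stable, so $\Wsld\delta(v)\le\ell(s_v)+\alpha_-\le\ell(\tau)+\alpha_-$ --- a contradiction. (If $c$ is reached only through a jump landing exactly at $c$, one first descends, using \eqref{eq:jump2} for that jump, to the sub–interval adjacent to $c$ that $u$ actually visits.)

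The base case $\tau=a$ is the most delicate point and is exactly where (IC1)--(IC3) enter. I would read (IC1) as the lower half of \eqref{eq:1d-stability} at $a$; rewrite (IC3), using $\Psi(z-u(a))=\alpha_-(u(a)-z)$ for $z<u(a)$, as $\cE(a,z)+\sfD(u(a),z)>\cE(a,u(a))$ for all $z<u(a)$, i.e.\ $\rmM(a,u(a))\cap(-\infty,u(a))=\emptyset$ (by \eqref{eq:11} this also prevents the supremum in $\Wsld\delta(u(a))$ from being attained to the left); then a downward optimal transition issued from $u(a)$ would have to take its first step into $\rmM(a,u(a))$ on the left of $u(a)$ --- using that such a transition can be taken non–increasing, the mirror of Claim 3 in the proof of Theorem \ref{thm:main-theorem} --- contradicting (IC3), while a continuous left move at $a$ is excluded by (IC2) as above. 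This contradiction forces $\tau=b$, so $u$ is nondecreasing.

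For Step (B), fix $t\in[a,b]\setminus\Ju$. If $t\in\supp(u')$ (or $t=b$ with $u$ non–constant near $b$), \eqref{eq:equation1} and continuity give $W'(u(t))=\Wird\delta(u(t))=\ell(t)-\alpha_+$; if $u$ is locally constant near $t$ with value $c=u(t)>u(a)$, the same follows by comparing $\Wird\delta(c)=\ell(s^\ast)-\alpha_+\le\ell(t)-\alpha_+$ (with $s^\ast<t$ the last time $u$ leaves $c$ upward) with the stability bound $\Wird\delta(c)\ge\ell(t)-\alpha_+$; and if $c=u(a)$, stability at $t$ gives $\ell(t)-\alpha_+\le\Wird\delta(u(a))$, i.e.\ $\ell(t)-\alpha_+\in\maxW(u(a))$. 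To promote this to $u(t)\in\invmaxWca(\ell(t)-\alpha_+)$ I would verify $\Wird\delta(v)\le\ell(t)-\alpha_+$ for all $v\in[u(a),u(t)]$ (for visited $v$ by the same estimate, for $v$ in an interval skipped by an upward jump at $s_0<t$ by \eqref{eq:jump2} and $\ell(s_0)\le\ell(t)$), so that $\maxW(u(t))=\max_{[u(a),u(t)]}\Wird\delta=\Wird\delta(u(t))$, i.e.\ $u(t)$ lies in the contact set. Step (C) is then immediate off $\Ju$, since $\invmaxWca(\cdot)\subseteq[p^{u(a)}_{\sfl,\delta}(\cdot),p^{u(a)}_{\sfr,\delta}(\cdot)]$ (equivalently use \eqref{eq:14}--\eqref{eq:15}); and for $t\in\Ju$ it follows from monotonicity of $u$ and \eqref{eq:jump1} by letting $s\uparrow t$ and $s\downarrow t$ through continuity points and using the left–continuity of $p^{u(a)}_{\sfl,\delta}$, the right–continuity of $p^{u(a)}_{\sfr,\delta}$, and the continuity of $\ell$. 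The main obstacle throughout is Step (A): organizing the case distinctions at the turn–down time (continuous versus jump; $c$ reached continuously versus by a jump) and, above all, settling the base case $\tau=a$ cleanly from (IC1)--(IC3).
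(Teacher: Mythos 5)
Your overall plan (prove monotonicity first, then pin down the contact-set inclusion, then read off the two-sided bound) is the right one, and Steps (B) and (C) correspond closely to Claims 3--4 of the paper's own proof. However, Step (A) takes a genuinely different route from the paper: you argue from a ``first time $\tau$ where monotonicity fails,'' whereas the paper introduces the set $\Sigma=\{t:W'(\ur(t))=\ell(t)+\alpha_-\}$, shows $\Sigma=\Sigma_a$ via the auxiliary Lemma~\ref{lem:monotonicity-proof-lemma}, and deduces monotonicity from $(u')^-([a,b))=0$. This is not just a cosmetic change, because the hard part of the monotonicity claim is precisely the scenario that your argument does not close.

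Here is the gap. At the turn-down time $\tau$, after establishing $W'(c)=\ell(\tau)-\alpha_+$ and $\Wsld\delta(c)=\ell(\tau)+\alpha_-$ with $c=\ul(\tau)$, you locate $z_0<c$ with $z_0\in\rmM(\tau,c)$, and you want a contradiction from $\Wsld\delta(v)>\ell(\tau)+\alpha_-$ on $(z_0,c)$ together with the claim that $u$ visits some such $v$ at a prior stable time $s_v<\tau$. But if the level $c$ was last reached by an upward jump at $\tau'<\tau$ with $\ul(\tau')\le z_0$ (so that $(z_0,c)$ lies entirely inside the jumped-over interval), $u$ never visits $(z_0,c)$, and the jump condition \eqref{eq:jump2} for that upward jump bounds only $\Wird\delta$, not $\Wsld\delta$, on $(\ul(\tau'),c)$; it therefore does not deliver the contradiction. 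Your parenthetical ``one first descends, using \eqref{eq:jump2} for that jump, to the sub-interval adjacent to $c$ that $u$ actually visits'' is exactly where the difficulty is hiding, and it is not substantiated. This is the delicate configuration --- an upward jump into $c$ immediately followed by a downward jump out of $c$ at the same value of $\ell$ --- that the paper's Lemma~\ref{lem:monotonicity-proof-lemma} is built to exclude: its Claim~2 compares the two competing optimal transitions (at $\rho$ and at $\sigma$) and shows that $\ur(\sigma)$ cannot sit in a hole of the upward transition without violating the stability and monotone-energy-decay properties of Theorem~\ref{prop:2} and Proposition~\ref{prop:3}. A correct version of your Step (A) would need an analogous transition-level argument here; without it, the contradiction does not go through. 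A secondary (repairable) imprecision is the case $\tau>a$ with $u\equiv u(a)$ on $[a,\tau)$: your ``up-phase'' reasoning does not apply, and one has to fall back on (IC1)--(IC3) together with monotonicity of $\ell$ to force $\ell(\tau)=\ell(a)$ before (IC2)/(IC3) can be invoked; you only mention (IC2) for $\tau=a$.
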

This result is a generalization to the visco-energetic framework of \cite[Theorem 6.3]{Rossi-Savare13}. One of the technical point there is to avoid the extra assumption
\begin{equation} \label{eq:monotonicity-extra-assumption}
\Wsl(u)-\alpha_- \le \ell(t)< \Wir(u)+\alpha_+\quad \text{for every $u \in\R$},
\end{equation}
which is not immediately satisfied if $\alpha_\pm$ are very small. In our context, if $\delta$ is too small \eqref{eq:monotonicity-extra-assumption} is still not satisfied even if we replace the one-sided slopes with their $\delta$-corrected versions.  
\par
The next technical lemma contributes to solve this issue. Compared with the same result in the energetic setting, we need a more refined analysis of the behaviour of $u$ at jumps.
\begin{lemma} \label{lem:monotonicity-proof-lemma} Under the same assumptions of Theorem \ref{thm:monotonicity-main-theorem}, let $a<\sigma'<\sigma\le b$ be such that
\begin{equation}
\ell(t)-W'(\ur(t))>-\alpha_-=\ell(\sigma)-W'(\ur(\sigma))\quad\text{for every $t\in[\sigma',\sigma)$}.
\end{equation}
Then $\sigma\notin \Ju$.
\end{lemma}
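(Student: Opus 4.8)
The plan is to argue by contradiction, assuming $\sigma\in\Ju$.

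I would first reduce to a downward jump. If $\ul(\sigma)=\ur(\sigma)$ then \eqref{eq:jump1} pins $u(\sigma)$ to this common value, so $\sigma\notin\Ju$; hence $\ul(\sigma)\ne\ur(\sigma)$. An increasing jump is impossible: $\ul(\sigma)<\ur(\sigma)$ would give $\sigma\in\supp((u')^+)$, so \eqref{eq:equation1} (or \eqref{eq:6}; for $\sigma=b$ I would first pass to the left-continuous modification, still a \VE\ solution) would force $W'(\ur(\sigma))=\ell(\sigma)-\alpha_+$, contradicting the hypothesis $W'(\ur(\sigma))=\ell(\sigma)+\alpha_-$ since $\alpha_++\alpha_->0$. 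So $\ul(\sigma)>\ur(\sigma)$ and $\sigma\in\supp((u')^-)$. I would also note that $\supp((u')^-)$ cannot accumulate at $\sigma$ from the left, since \eqref{eq:equation2} would then give $W'(\ur(t))=\ell(t)+\alpha_-$ along a sequence $t\uparrow\sigma$, against the strict inequality on $[\sigma',\sigma)$; consequently $u$ is nondecreasing on a left-neighborhood of $\sigma$ and $\ur(t)\uparrow\ul(\sigma)$.

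Next I would extract the structure at $\sigma$. From \eqref{eq:jump2} at $v=\ul(\sigma)$ together with the stability \eqref{eq:1d-stabilityleft} of $\ul(\sigma)$ one gets $\Wsld\delta(\ul(\sigma))=\ell(\sigma)+\alpha_-$. Since $u$ is a \VE\ solution, the jump conditions \eqref{Jve} hold, so by Theorem \ref{prop:2} there is an optimal transition $\vartheta\in\rmC(E;\R)$ from $\ul(\sigma)$ down to $\ur(\sigma)$ satisfying \eqref{eq:180}; its first step goes to some $z\in\rmM(\sigma,\ul(\sigma))$ with $z<\ul(\sigma)$, and Proposition \ref{prop:3}, via \eqref{eq:9}, then yields $\Wsld\delta(v)>\ell(\sigma)+\alpha_-$ for all $v\in(z,\ul(\sigma))$. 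As $\ell$ is nondecreasing, $\Wsld\delta(v)>\ell(t)+\alpha_-$ for all such $v$ and all $t\le\sigma$, so no point of $(z,\ul(\sigma))$ is $\sfD$-stable at any $t\le\sigma$. Combining this with the $\sfD$-stability of $\ur(t)$ (\eqref{eq:1d-stabilityright}), the bound $\ur(t)\le\ul(\sigma)$, and $\ur(t)\to\ul(\sigma)$, I would deduce that $\ur(t)=\ul(\sigma)=:c$ for $t$ in some interval $[\rho,\sigma)$, i.e.\ $u\equiv c$ there.

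The hard part is excluding this constant-then-jump configuration. Since $c=\ur(t)$ is $\sfD$-stable for $t\in[\rho,\sigma)$, $\Wsld\delta(c)\le\ell(\rho)+\alpha_-$; with $\Wsld\delta(c)=\ell(\sigma)+\alpha_-$ and $\ell$ nondecreasing this forces $\ell$ constant on $[\rho,\sigma]$ (if $\rho=a$, I would use (IC1) instead of the stability of $\ur(\rho)$). Reading the strict hypothesis on $[\max(\rho,\sigma'),\sigma)$, where $\ur(t)=c$, gives $W'(c)<\ell(t)+\alpha_-=\Wsld\delta(c)$, hence $\Wsld\delta(c)>W'(c)$, and Proposition \ref{prop: Wir.prop} makes $\Wsld\delta$ strictly decreasing near $c$; so points just below $c$ fail upper $\sfD$-stability at time $\rho$. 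If $\rho=a$, this is the degeneracy excluded by (IC3) (with (IC2) its companion), which forces $\Wsld\delta(u(a))=W'(u(a))$ when $\Wsld\delta(u(a))=\ell(a)+\alpha_-$. If $\rho>a$, maximality of $\rho$ shows $u$ reaches $c$ at $\rho$ either continuously through $\sfD$-stable values — impossible, as those just below $c$ are unstable — or as the endpoint of an increasing optimal transition, which by \eqref{eq:6} and \eqref{eq:jump2} would again pass through points just below $c$ whose instability at $\rho$ contradicts Theorem \ref{prop:2}. Either way we reach a contradiction, so $\sigma\notin\Ju$. I expect this last sub-case, and the careful combination of the reverse triangle inequality \eqref{eq:delta2} with the initial-data conditions (IC1)--(IC3), to be where the genuine difficulty lies.
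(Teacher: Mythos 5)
Your overall plan coincides with the paper's: reduce to a downward jump, produce a point $z<\ul(\sigma)$ with $\Wsld\delta(v)>\ell(\sigma)+\alpha_-$ for $v\in(z,\ul(\sigma))$, deduce that $\ur\equiv\ul(\sigma)$ and $\ell\equiv\ell(\sigma)$ on a left neighborhood of $\sigma$, propagate this constancy back to $a$, and then derive the contradiction from (IC3). Steps 1--3 and the identification of the interval $[\rho,\sigma)$ are essentially the paper's preliminaries and Claim~1, with the cosmetic difference that you obtain the point $z$ as the first step of the optimal transition via Theorem~\ref{prop:2} and Proposition~\ref{prop:3}, whereas the paper gets the analogous $\tilde u$ via the strict inequality $\Wsld\delta(\ul(\sigma))>W'(\ul(\sigma))$ and the remark preceding Proposition~\ref{prop: Wir.prop}. (In both versions one tacitly assumes the transition begins with a genuine discrete step, i.e.\ that the supremum defining $\Wsld\delta(\ul(\sigma))$ is attained; a purely sliding start would need a separate word, but this is the same implicit reduction the paper makes.)

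The genuine gap is in your treatment of $\rho>a$. You claim that an increasing optimal transition ending at $\rho$ with $\ur(\rho)=c$ ``would again pass through points just below $c$ whose instability at $\rho$ contradicts Theorem~\ref{prop:2}.'' This does not follow: Theorem~\ref{prop:2} does \emph{not} require the intermediate values of an optimal transition to be $\sfD$-stable --- on the contrary, viscous transitions (see \eqref{eq:viscoustransition}) are characterized by $\Res>0$ at interior points. Moreover, the instability you established just below $c$ concerns the \emph{left} slope ($\Wsld\delta(v)>\ell(\rho)+\alpha_-$), whereas Theorem~\ref{prop:2} applied to an \emph{increasing} transition only controls $\Wird\delta$ on the holes via Proposition~\ref{prop:3}, \eqref{eq:8}. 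The two conditions are compatible, so no contradiction results; and the transition could in any case be a single pure jump from $\ul(\rho)$ directly to $c$ with no intermediate points at all.

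The paper's Claim~2 uses a different and more delicate device. Having shown $\ell(\rho)=\ell(\sigma)$, one has $\cE(\rho,\cdot)=\cE(\sigma,\cdot)$. If $\ul(\rho)\le\ur(\sigma)$, the strict energy decrease along the optimal transition at $\sigma$ gives $\cE(\rho,\ur(\sigma))=\cE(\sigma,\ur(\sigma))<\cE(\sigma,\ul(\sigma))=\cE(\rho,\ur(\rho))$, while the monotone energy decrease along the transition at $\rho$ forces $\cE(\rho,v)\ge\cE(\rho,\ur(\rho))$ for every $v$ on that transition; hence $\ur(\sigma)$ cannot lie on the transition at $\rho$ and sits strictly inside one of its holes, whence Proposition~\ref{prop:3} gives $\Wird\delta(\ur(\sigma))<\ell(\sigma)-\alpha_+$, contradicting the stability of $\ur(\sigma)$. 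This energy comparison, not instability of points near $c$, is the crux, and it is what your sketch omits. You would also need to make the iteration to $\rho=a$ rigorous; the paper does this by proving the set $\cP=\{\rho:\ur\equiv\ul(\sigma),\ \ell\equiv\ell(\sigma)\ \text{on}\ [\rho,\sigma)\}$ is nonempty, open and closed in $[a,\sigma)$, which is cleaner and more self-contained than an informal ``maximality of $\rho$'' step.
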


\begin{proof}
We argue by contradiction and assume that $\sigma\in \Ju$. In view of \eqref{eq:equation1} necessarily
\begin{equation} \label{eq:monotone-loading-ineq2}
\ul(\sigma)>\ur(\sigma),
\end{equation}
and \eqref{eq:equation2} shows that $\ur$ is nondecreasing in $[\sigma',\sigma)$. Moreover, combining \eqref{eq:1d-stabilityleft} and \eqref{eq:jump2},
\[
\ell(\sigma)+\alpha_-=\Wsld\delta(\ul(\sigma))>W'(\ul(\sigma)),
\]
so that by Proposition \ref{prop: Wir.prop} there exist $\tilde{u}<\ul(\sigma)$ which attains the supremum in the definition of $\Wsld\delta(\ul(\sigma))$ and a neighborhood of $\ul(\sigma)$ in which $\Wsld\delta(u)$ is decreasing. 
\par

We want to prove that $\ul(\sigma)=u(a)$. We consider the set
\[
\cP:=\{\rho\in [a,\sigma): \ur(t)\equiv \ul(\sigma),\mbox{ }\ell(t)=\ell(\sigma)\quad \text{for all $t\in[\rho,\sigma)$}\},
\]
and we prove that $[a,\sigma)=\cP$.

\underline{Claim 1}. $\cP\neq \emptyset$ and $\cP$ is closed in $[a,\sigma)$.
\newline We need to show that $\ell$ and $u$ are constant in a left neighborhood of $\sigma$. We already know that they are nondecreasing in $[\sigma',\sigma)$. To show that they are also nonincreasing we argue by contradiction: assume that there exists a sequence $t_n<\sigma$ converging to $\sigma$ such that 
\[
u_n:=\ur(t_n)\uparrow \ul(\sigma), \mbox{ }\ell_n:=\ell(t_n)\uparrow\ell(\sigma) \quad\text{and $u_n+\ell_n<\ul(\sigma)+\ell(\sigma)$}.
\]
Then, for $n$ great enough, $u_n>\tilde{u}$. If $u_n<\ul(\sigma)$ the global stability \eqref{eq:1d-stabilityright} and the jump condition \eqref{eq:17} yield
\[
\ell_n+\alpha_-\ge \Wsld\delta(u_n)>\ell(\sigma)+\alpha_-\ge \ell_n+\alpha_-,
\]
which is absurd. Similarly, if $\ell_n<\ell(\sigma)$,
\[
\ell_n+\alpha_-\ge \Wsld\delta(u_n)\ge\ell(\sigma)+\alpha_-> \ell_n+\alpha_-.
\]
This proves that $\cP$ contains a left neighborhood of $\sigma$ and then it is non-empty. Moreover, $\cP$ is clearly closed in $[a,\sigma)$.
\par
\underline{Claim 2}. Suppose that $\rho\in \cP$. Then $\rho\notin \Ju$.
\newline
By contradiction suppose that $\rho\in \Ju$. From Proposition \ref{prop: Wir.prop}, $\Wsl$ is decreasing in an open set containing $\ul(\sigma))=\ur(\rho)$. From \eqref{eq:jump2} the only possibility is that $\ul(\rho)<\ur(\rho)$. 
\newline
Suppose that $\ul(\rho)\le \ur(\sigma)$. Then we consider an optimal transition $\vartheta\in \rmC(E,\R)$ connecting $\ul(\rho)$ and $\ur(\rho)$.
Clearly, $\ur(\sigma)\notin E$ since $\cE(\rho,\ur(\sigma))=\cE(\sigma,\ur(\sigma))$ but from $\rm{(J_{VE})}$ the energy is decreasing during a transition:
\[
\cE(\sigma,\ur(\sigma))<\cE(\sigma,\ul(\sigma))=\cE(\rho,\ur(\rho))<\cE(\rho,\ur(\sigma)).
\]
Then $\ur(\sigma)$ must be in a hole of $E$. Combing Theorem \ref{prop:2} and  Proposition \ref{prop:3},
\[
\Wird\delta(\ur(\sigma))<\ell(\sigma)-\alpha_+,
\]
which contradicts the global stability \eqref{eq:1d-stabilityright}. In a similar way we can discuss the case $\ul(\rho)\ge \ur(\sigma)$.
\par
\underline{Claim 3}. If $a<\rho\in \cP$, there exist $\varepsilon>0$ such that $\ell(t)\equiv \ell(\rho)\equiv \ell(\sigma)$ and  $u(t)=u(\rho)=\ul(\sigma)$ for every $t\in [\rho-\varepsilon,\rho]$.
\newline Thanks to Claim 2, $\rho\notin \Ju$. Then we can argue as in Claim 1, starting from $\ur(\rho)=\ul(\sigma)$ and $\ell(\rho)=\ell(\sigma)$.
\par
\underline{Conclusion}. $\cP$ is also open in $[a,\sigma)$ since for every $\rho\in \cP\cap (a,\sigma)$, it contains a left neighborhood of $\rho$ ($\cP$ obviously contains also a right neighborhood of $\rho$). Since $\cP$ is both open and closed, $\cP=[a.\sigma)$.  
\par
Another application of Claim 2, combined with \eqref{eq:monotonicity-theorem-assumpion2}, which prevents the case $u(a)>\ur(a)$, yields that $a\notin \Ju$, so that $\ul(\sigma)=u(a)$ and $\ell(\sigma)=\ell(a)$. Finally, another application of \eqref{eq:monotonicity-theorem-assumpion2} gives the contradiction with \eqref{eq:monotone-loading-ineq2}.
\end{proof}

With these notions at our disposal, the proof of Theorem \ref{thm:monotonicity-main-theorem} is a simple adaptation of \cite[Theorem 6.3]{Rossi-Savare13}. For completeness, we report the steps in details.
\begin{proof}[Proof of Theorem \ref{thm:monotonicity-main-theorem}] We split again the argument in various steps.
\par
\underline{Claim 1}. There exists $\gamma\in [a,b]$ such that $\ell(t)-W'(\ur(r))>\gamma$ for all $t\in (\gamma,b]$ and $u(t)\equiv u(a),\,\ell(t)\equiv\ell(a)$ in $[a,\gamma]$.
\newline 
Let us consider the set 
\[
\Sigma:=\{t\in [a,b]: W'(\ur(t))=\ell(t)+\alpha_-\}
\]
and observe that $t_n\in\Sigma,\quad t_n\downarrow t\quad \Rightarrow\quad t\in \Sigma$.
If $a\in \Sigma$, we denote by $\Sigma_a$ the connected component of $\Sigma$ containing $a$ and we set $\gamma:=\sup \Sigma_a$. If $\gamma>a$, then
\[
W'(\ur(t))=\ell(t)+\alpha_-\quad\text{for every $t\in[a,\gamma]$},
\]
so that by \eqref{eq:equation1} $u$ is nonincreasing in $[a,\gamma]$. Assumption \eqref{eq:monotonicity-theorem-assumpion2} imply that $a\notin \Ju$ and $\ur(a)=u(a)$. Since also $\ell$ is nondecreasing we conclude by \eqref{eq:monotonicity-theorem-assumption1} and \eqref{eq:equation2} that $u(t)\equiv u(a)$ and $\ell(t)\equiv\ell(a)$ in $[a,\gamma]$; moreover, by the same argument, $\gamma\notin \Ju$, so that $\gamma\in \Sigma$. When $a\notin \Sigma$ we simply set $\gamma:=a$ and $\Sigma_a=\emptyset$.
\par
The claim then follows if we show that $\Sigma\setminus\Sigma_a$ is empty. This is trivial if $\gamma=b$. If $\gamma<b$ we suppose $\Sigma\setminus \Sigma_a \neq \emptyset$ and we argue by contradiction. We can find points $\gamma_2>\gamma_1>\gamma$ such that $\gamma_1\notin \Sigma$ and $\gamma_2\in \Sigma$. We can consider $\sigma:=\min(\Sigma\cap [\gamma_1,b])>\gamma_1>\gamma$. Lemma \ref{lem:monotonicity-proof-lemma} with $\sigma':=\gamma_1$ yields that $\sigma\notin \Ju$, so that we can find $\varepsilon>0$ such that
\begin{equation} \label{eq:monotone-loading-ineq3}
-\alpha_-<\ell(t)-W'(\ur(t))<\alpha_+\qquad\text{for every $t\in(\sigma-\varepsilon,\sigma)$}.
\end{equation}
Point $b)$ of Theorem \ref{thm:main-theorem} implies that $\ur(t)=u(t)\equiv \ul(\sigma)=\ur(\sigma)$ is constant in $(\sigma-\varepsilon,\sigma)$. Hence, $W'(\ur(t))\equiv W'(\ur(\sigma))=\ell(\sigma)+\delta_-\ge \ell(t)+\delta_-$ for every $t\in (\sigma-\varepsilon,\sigma)$, since $\ell$ is nondecreasing and $\sigma\in \Sigma$. This contradicts \eqref{eq:monotone-loading-ineq3}.
\par
\underline{Claim 2}. $u$ is nondecreasing in $[a,b]$.
\newline
Relation \eqref{eq:equation2} and Claim 1 imply that $\left( u'\right)^-\left([a,b)\right)=0$, so that $u$ in nondrecreasing in $[a,b)$. If $b$ is a jump point, then by \eqref{eq:jump1} $u(b)>\ul(b)$.

\par
\underline{Claim 3}. Let $B:=\{t\in [\gamma,b]:\Wird\delta(u(a))+\alpha_+=\ell(t)\}$ and let $\beta:=\min B$ (with the convention $\beta=b$ if $B$ is empty). Then $u(t)\equiv u(a)$ in $[a,\beta)$ and
\begin{equation} \label{eq:monotone-loading-identity1}
\Wird\delta(\ul(t))=\Wird\delta(\ur(t))=\ell(t)-\alpha_+\quad\text{for all $t\in (\beta,b)$}.
\end{equation}
In particular, $\ul(t)\ge p_{\sfl,\delta}^{u(a)}(\ell(t)-\alpha_+)$ for all $t\in [a,b]$.
\newline The first statement follows from the previous Claim and Corollary \ref{cor:main-theorem-corollary}. 
\newline To prove the second identity in \eqref{eq:monotone-loading-identity1} for $\ur(t)$, we argue by contradiction and we suppose that exists a point $s\in(\beta,b]$ such that $\Wird\delta(\ur(s))+\alpha_+>\ell(s)$. Then, in view of Corollary \eqref{cor:main-theorem-corollary} $u$ is locally constant around $s$. Since $\ell$ is nondecreasing, because of $\eqref{eq:equation1}$, we conclude that $u(t)\equiv u(s)$ for every $t \in[\gamma,s]$, so that $s\le \beta$, a contradiction. The first identity of \eqref{eq:monotone-loading-identity1} follows by continuity and by \eqref{eq:equation1}.
\newline The last statement is a consequence of \eqref{eq:15}. Notice that we can also take $t=b$ since $\Wird\delta(\ul(t))=\ell(t)-\alpha_+$ still holds in $t=b$.
\par
\underline{Claim 4}. For all $t\in [a,b]$ we have $\ur(t)\le p_{\sfr,\delta}^{u(a)}(\ell(t)-\alpha_+)$.
\newline 
If $\ur(t)=u(a)$ there is nothing to prove. Otherwise, let $t\ge\beta$ and take $z\in (u(a),\ur(t))$. Since $u$ is nondecreasing, there exists $s\in [\beta,t]$ such that $\ul(s)\le z\le \ur(s)$, so that \eqref{eq:jump2} (in the case $s\in\Ju$) or \eqref{eq:equation1} (in the case $\ul(s)=\ur(s)$) yield
\[
\Wird\delta(z)\le\ell(s)-\alpha_+\le \ell(t)-\alpha_+,
\] 
since $\ell$ is nondecreasing. Being $z<\ur(t)$ arbitrary, the claim follows from the second of \eqref{eq:15}.
\par
\underline{Conclusion}. Combining Claim 2, Claim 3 and Claim 4, we get
\[
p_{\sfl,\delta}^{u(a)}(\ell(t)-\alpha_+)\le \ul(t)\le u(t)\le\ur(t) \le p_{\sfr,\delta}^{u(a)}(\ell(t)-\alpha_+)\quad \text{for every $t\in [a,b]$},
\]
which proves relation \eqref{eq:monotonicity-theorem-thesis2}. Finally, \eqref{eq:monotonicity-theorem-thesis1} is due to \eqref{eq:monotonicity-theorem-thesis2} and \eqref{eq:monotone-loading-identity1}.
\end{proof}

In a similar way, we can deduce the the characterization of Visco-Energetic solutions in the case of a decreasing load.
\begin{theorem}[Nonincreasing loading] Let $\ell\in \rmC^1([a,b])$ be a nonincreasing loading and let $u\in \rm{BV} ([a,b],\R)$ be a Visco-Energetic solution of the rate-independent system $(\R,\cE,\Psi,\delta)$ satisfying
\begin{gather}
\ell(a)\le \Wird\delta(u(a))+\alpha_+,  \\
W'>W'(u(a)) \text{ in a right neighborhood of u(a)}\quad \text{if $W'(u(a))=\ell(a)-\alpha_+$};
\end{gather}
and, for every $z>u(a)$,
\begin{equation}
\frac{W(z)-W(u(a))+\delta(u(a),z)}{z-u(a)}>\Wird\delta(u(a)) \quad \text{if $\Wird\delta(u(a))=\ell(a)-\alpha_+$}.
\end{equation}
Then, similarly to Theorem \ref{thm:45}, $u$ satisfies
\begin{equation}
u\text{ is nonincreasing},\qquad u(t)\in \invminWca(\ell(t)+\alpha_-)\quad\text{for every $t\in [a,b]\setminus \Ju$}
\end{equation}
and therefore
\begin{equation}
u(t)\in [q_{\sfl,\delta}^{u(a)}(\ell(t)\alpha_-), q_{\sfr,\delta}^{u(a)}(\ell(t)+\alpha_-)]\quad\text{for every $t\in [a,b]$}.
\end{equation}

\end{theorem}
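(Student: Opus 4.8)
The plan is to deduce the statement from the nondecreasing case, Theorem~\ref{thm:monotonicity-main-theorem}, via the orientation-reversing change of variables $\tilde u(t):=-u(t)$, $\tilde\ell(t):=-\ell(t)$, $\tilde W(x):=W(-x)$, $\tilde\delta(x,y):=\delta(-x,-y)$. This produces a rate-independent system $(\R,\tilde\cE,\tilde\Psi,\tilde\delta)$ with $\tilde\cE(t,x)=\tilde W(x)-\tilde\ell(t)x$ and dissipation potential $\tilde\Psi(v)=\alpha_-v^++\alpha_+v^-$, i.e.\ $\tilde\alpha_\pm=\alpha_\mp$. First I would check that this data meets the standing assumptions of section~\ref{subsec:one-dimensional-setting}: $\tilde W\in\rmC^1$ still obeys \eqref{eq:W'-assumption}, $\tilde\ell\in\rmC^1([a,b])$ is now \emph{nondecreasing}, and $\tilde\delta$ is continuous, nonnegative, and still satisfies \eqref{eq:delta}--\eqref{eq:delta2} (immediate for the model corrections $\delta(u,v)=f(\Psi(v-u))$, and in general because the reverse triangle inequality enters the proofs only for ordered triples, in whichever orientation is needed). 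Then one records the covariance of all derived objects under $x\mapsto-x$: $\Wird{\tilde\delta}(x)=-\Wsld\delta(-x)$ and $\Wsld{\tilde\delta}(x)=-\Wird\delta(-x)$, while the $\sfD$-stable set, the minimal set $\rmM$, the residual $\Res$, the transition cost $\Cf$, the dissipation cost $\Fd$, and hence $\mVar{\tilde\Psi,\sfc}(\tilde u,[a,t])=\mVar{\Psi,\sfc}(u,[a,t])$ are reflection-covariant. Reading off Theorem~\ref{thm:main-theorem} — every one of whose relations \eqref{eq:1d-stability}, \eqref{eq:equation1}, \eqref{eq:equation2}, \eqref{eq:jump1}, \eqref{eq:jump2} is manifestly invariant under the reflection, with \eqref{eq:equation1} and \eqref{eq:equation2} exchanged since $(\tilde u')^+=(u')^-$ — one concludes that $\tilde u$ is a Visco-Energetic solution of $(\R,\tilde\cE,\tilde\Psi,\tilde\delta)$ iff $u$ is one of $(\R,\cE,\Psi,\delta)$.

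Next I would translate the three hypotheses of Theorem~\ref{thm:monotonicity-main-theorem}. Using $\Wird{\tilde\delta}(\tilde u(a))=-\Wsld\delta(u(a))$ and $\tilde\alpha_\pm=\alpha_\mp$, the bound $\ell(a)\le\Wird\delta(u(a))+\alpha_+$ becomes (IC1) for the tilded system, $\tilde\ell(a)\ge\Wsld{\tilde\delta}(\tilde u(a))-\tilde\alpha_-$; the one-sided condition on $W'$ in a right neighbourhood of $u(a)$ becomes the condition on $\tilde W'$ in a left neighbourhood of $\tilde u(a)$, namely \eqref{eq:monotonicity-theorem-assumption1}; and the difference-quotient bound for $z>u(a)$ becomes \eqref{eq:monotonicity-theorem-assumpion2} for $\tilde z=-z<\tilde u(a)$. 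Thus Theorem~\ref{thm:monotonicity-main-theorem} applies to $\tilde u$ and yields that $\tilde u$ is nondecreasing, that $\tilde u(t)$ lies in the contact-set selection of the inverse upper monotone envelope of $\Wird{\tilde\delta}$ based at $\tilde u(a)$ evaluated at $\tilde\ell(t)-\tilde\alpha_+$ for every $t\notin\Ju$, and that $\tilde u(t)\in[p_{\sfl,\tilde\delta}^{\tilde u(a)}(\tilde\ell(t)-\tilde\alpha_+),\,p_{\sfr,\tilde\delta}^{\tilde u(a)}(\tilde\ell(t)-\tilde\alpha_+)]$ for all $t$.

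Finally I would undo the change of variables in the conclusion. From $\Wird{\tilde\delta}(x)=-\Wsld\delta(-x)$ one checks that the upper monotone envelope of $\Wird{\tilde\delta}$ based at $\tilde u(a)$ equals $x\mapsto-\bigl(\text{lower monotone envelope of }\Wsld\delta\text{ based at }u(a)\bigr)(-x)$; passing to inverse graphs interchanges the left- and right-continuous selections (the reflection reverses orientation), so that, by \eqref{eq:15} and its $\Wsld\delta$-counterpart, $p_{\sfl,\tilde\delta}^{\tilde u(a)}(\ell)=-q_{\sfr,\delta}^{u(a)}(-\ell)$, $p_{\sfr,\tilde\delta}^{\tilde u(a)}(\ell)=-q_{\sfl,\delta}^{u(a)}(-\ell)$, and the tilded contact-set selection at $\ell$ equals $-\invminWca(-\ell)$. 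Since $\tilde\ell(t)-\tilde\alpha_+=-(\ell(t)+\alpha_-)$, the substitution $\tilde u(t)=-u(t)$ turns ``$\tilde u$ nondecreasing'' into ``$u$ nonincreasing'', turns the membership above into $u(t)\in\invminWca(\ell(t)+\alpha_-)$ for $t\notin\Ju$, and turns the interval bound into $u(t)\in[q_{\sfl,\delta}^{u(a)}(\ell(t)+\alpha_-),\,q_{\sfr,\delta}^{u(a)}(\ell(t)+\alpha_-)]$ for all $t$, which is exactly the claim.

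The only real work is in the first step: verifying that each ingredient entering the definition of a Visco-Energetic solution is covariant under $x\mapsto-x$, and — the delicate point — that $\tilde\delta$ is still an admissible viscous correction, i.e.\ that \eqref{eq:delta2} is preserved by the reflection; everything afterwards is a mechanical translation. Should one prefer not to invoke this mirror symmetry, the alternative is to repeat the four-step proof of Theorem~\ref{thm:monotonicity-main-theorem} verbatim, with all inequalities and all ``left''/``right'' roles reversed and Lemma~\ref{lem:monotonicity-proof-lemma} replaced by its mirror (``$-\alpha_-=\ell(\sigma)-W'(\ur(\sigma))$'' becoming ``$\alpha_+=\ell(\sigma)-W'(\ur(\sigma))$'', $\Wsld\delta$ becoming $\Wird\delta$, suprema becoming infima); this is longer but entirely routine.
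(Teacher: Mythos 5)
Your reduction of the nonincreasing case to Theorem~\ref{thm:monotonicity-main-theorem} via the orientation-reversing substitution $\tilde u=-u$, $\tilde\ell=-\ell$, $\tilde W(x)=W(-x)$, $\tilde\delta(x,y)=\delta(-x,-y)$, $\tilde\Psi(v)=\Psi(-v)$ (so $\tilde\alpha_\pm=\alpha_\mp$) is correct and complete; the paper gives no proof beyond ``in a similar way,'' meaning precisely the mirror image of the proof of Theorem~\ref{thm:monotonicity-main-theorem} and of Lemma~\ref{lem:monotonicity-proof-lemma}, and your change-of-variables argument packages that mirroring abstractly rather than repeating the four claims with signs flipped. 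The covariances you record — $\Wird{\tilde\delta}(x)=-\Wsld\delta(-x)$, $\Wsld{\tilde\delta}(x)=-\Wird\delta(-x)$, invariance of $\SSD$, $\rmM$, $\Res$, $\Cf$, $\Fd$ and $\mVar{\Psi,\sfc}$, hence of the VE-solution property — and the translations of (IC1)--(IC3) and of the conclusions (including $p^{\tilde u(a)}_{\sfl,\tilde\delta}(\ell)=-q^{u(a)}_{\sfr,\delta}(-\ell)$, $p^{\tilde u(a)}_{\sfr,\tilde\delta}(\ell)=-q^{u(a)}_{\sfl,\delta}(-\ell)$) all check out.

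You also correctly flag the one delicate point: as literally written, \eqref{eq:delta2} only asserts the reverse triangle inequality for increasing triples $u_0<v<u_1$, so the admissibility of $\tilde\delta$ needs the inequality on decreasing triples as well. This two-sided reading is in fact already implicit in the paper: the proofs of Proposition~\ref{prop: Wir.prop} (the $\Wsld\delta$ branch) and Proposition~\ref{prop:3} (the $z<u$ case) invoke \eqref{eq:delta2} on decreasing triples, and the model family $\delta(u,v)=f(\Psi(v-u))$ with $f$ strictly convex and $f(0)=0$ satisfies it in both orientations. With that understanding your argument is watertight.
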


\paragraph{Example.} We conclude with a final example, involving a more complex potential $W$ (see figure \ref{fig:7}). When $W\in \rmC^2([a,b];\R)$ and we choose $\delta(u,v):=\frac{\mu}{2}(v-u)^2$, with $\mu\ge-\min W''$, Visco-Energetic solutions follow the monotone envelope of $W'+\alpha_+$. 

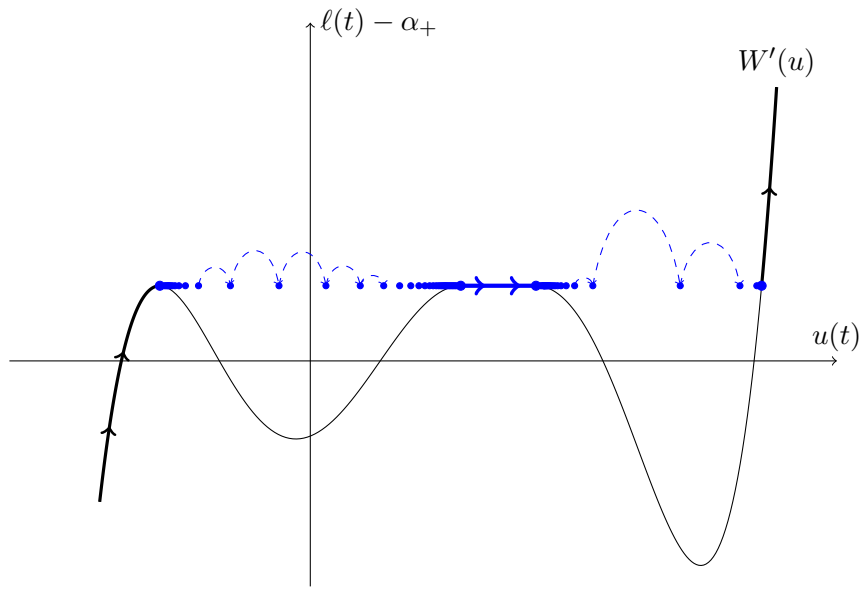
\begin{figure} [H]
\centering
\begin{tikzpicture}

\draw[->] (-4,0) -- (7,0) node[above] {$u(t)$};
\draw[->] (0,-3) -- (0,4.5) node[right] {$\ell(t)-\alpha_+$};

 \begin{scope}[scale=2]
    \draw[domain=-1.4:1,samples=100] plot ({\x}, {((\x)^2-1)^2*(2/5*\x-1)+0.5});
    \draw[domain=1.5:3.1,samples=100] plot ({\x}, {((\x-0.5)^2-1)^2*(2/5*(\x-0.5)-1)+0.5}) node[above] {$W'(u)$};
    \draw[domain=1:1.5,ultra thick,samples=100, blue, arrow inside={}{0.33,0.80}] plot ({\x}, {0.5});
    \draw[domain=-1.4:-1, very thick,samples=100] plot ({\x}, {((\x)^2-1)^2*(2/5*\x-1)+0.5}) [arrow inside={}{0.33,0.66}];
     \draw[domain=3:3.1,very thick,samples=100] plot ({\x}, {((\x-0.5)^2-1)^2*(2/5*(\x-0.5)-1)+0.5}) [arrow inside={}{0.50}];
    \foreach \Point in {(-1,0.5),(3,0.5)}{
    \draw[fill=blue, blue] \Point circle(0.03);
}
    \foreach \Point in {(1,0.5),(1.5,0.5)}{
    \draw[fill=blue, blue] \Point circle(0.03);
}
     \foreach \Point in {(-0.97,0.5),(-0.96,0.5),(-0.95,0.5),(-0.944,0.5), (-0.937,0.5), (-0.928,0.5), (-0.916,0.5), (-0.899,0.5), (-0.873,0.5), (-0.830,0.5), (-0.743,0.5), (-0.531,0.5) node[below] {\qquad $viscous$}, (-0.208,0.5), (0.104,0.5),(0.332,0.5),(0.488,0.5),(0.594,0.5),(0.668,0.5),(0.722,0.5),(0.762,0.5),(0.793,0.5),(0.817,0.5),(0.837,0.5),(0.853,0.5),(0.866,0.5),(0.877,0.5),(0.887,0.5),(0.895,0.5),(0.902,0.5),(0.91,0.5),(0.92,0.5),(0.93,0.5),(0.94,0.5),(0.95,0.5),(0.96,0.5),(0.97,0.5)}{
    \draw[fill=blue, blue] \Point circle(0.02);
   
  }

\foreach \Point in {(1.53,0.5),(1.54,0.5),(1.55,0.5),(1.552,0.5),(1.554,0.5),(1.557,0.5),(1.56,0.5),(1.563,0.5),(1.567,0.5), (1.571,0.5), (1.576,0.5), (1.581,0.5), (1.587,0.5), (1.594,0.5) node[below left] {\qquad $sliding$}, (1.603,0.5), (1.614,0.5), (1.627,0.5), (1.644,0.5), (1.667,0.5),(1.701,0.5),(1.757,0.5),(1.878,0.5),(2.459,0.5) node[below] {$viscous$},(2.856,0.5),(2.968,0.5),(2.993,0.5)}{
    \draw[fill=blue, blue] \Point circle(0.02);
}

\draw[->, dashed,blue,looseness=2] (-0.743,0.5) to [bend left=80] (-0.531,0.5);
\draw[->, dashed,blue,looseness=2.5] (-0.531,0.5) to [bend left=80] (-0.208,0.5);
\draw[->, dashed,blue,looseness=2.5] (-0.208,0.5) to [bend left=80] (0.104,0.5);
\draw[->, dashed,blue,looseness=2] (0.104,0.5) to [bend left=80] (0.332,0.5);
\draw[->, dashed,blue,looseness=1.5] (0.332,0.5) to [bend left=80] (0.488,0.5);

\draw[->, dashed,blue,looseness=1.5] (1.757,0.5) to [bend left=80] (1.878,0.5);
\draw[->, dashed,blue,looseness=3] (1.878,0.5) to [bend left=80] (2.459,0.5);
\draw[->, dashed,blue,looseness=2.5] (2.459,0.5) to [bend left=80] (2.856,0.5);


  \end{scope}
  
\end{tikzpicture}
\caption{Visco-Energetic solutions of a nonconvex energy and an increasing loading. The optimal transition is a combination of sliding and viscous parts.}
\label{fig:7}
\end{figure}

\def\cprime{$'$} \def\cprime{$'$} \def\cprime{$'$}

\end{document}